\DeclareSymbolFont{cyrletters}{OT2}{wncyr}{m}{n}
\DeclareMathSymbol{\Lfun}{\beta}{cyrletters}{"62}
\DeclareMathSymbol{\Rfun}{\beta}{cyrletters}{"76}
\numberwithin{equation}{subsection}
\numberwithin{figure}{subsection}
\newcommand{\bp}{{\boldsymbol \pi}}
\newcommand{\ba}{{\boldsymbol \alpha}}
\newcommand{\bb}{{\boldsymbol \beta}}
\newcommand{\bg}{{\boldsymbol \gamma}}
\newcommand{\bd}{{\boldsymbol \delta}}
\newcommand{\Aop}{{\mathbf{A}}}
\renewcommand{\phi}{\varphi}
\newcommand{\Z}{\mathbb{Z}}
\newcommand{\R}{\mathbb{R}}
\newcommand{\Te}{\mathbb{T}}
\newcommand{\e}{\mathrm{e}}
\newcommand{\diff}{\mathrm{d}}
\renewcommand{\and}{\hspace{8pt} \text{and} \hspace{8pt}}
\newcommand{\dd}{\,d}
\newcommand{\nl}{\left\lVert}
\newcommand{\nr}{\right\rVert}
\newcommand{\M}{\mathcal{M}}
\newcommand{\wtM}{\mathcal{N}}
\newcommand{\twtM}{N}
\renewcommand{\S}{\mathscr{S}}
\newcommand{\supp}{\operatorname{supp}}
\newcommand{\calC}{\mathcal{C}}
\DeclareMathOperator{\id}{{id}}
\numberwithin{equation}{section}
\newtheorem{thm}{Theorem}[subsection]
\newtheorem{prop}[thm]{Proposition}
\newtheorem{lem}[thm]{Lemma}
\newtheorem*{thm-others}{Theorem}
\theoremstyle{definition}
\newtheorem{defn}[thm]{Definition}
\theoremstyle{remark}
\newtheorem{rem}[thm]{Remark}
\numberwithin{equation}{subsection}
\numberwithin{figure}{subsection}
\def\R{{\mathbb R}}
\renewcommand{\descriptionlabel}[1]%
         {\dblue{#1:}\\}
\begin{document}

\title[A critical topology for $L^p$-Carleman classes with $0<p<1$]
{A critical topology for $L^p$-Carleman
\\ 
classes with $0<p<1$}
\author{Haakan Hedenmalm} 
\address{Department of Mathematics, KTH Royal Institute of Technology,
\newline Stockholm, 100 44, Sweden}
\email{haakanh@math.kth.se}
\author{Aron Wennman}
\address{Department of Mathematics, KTH Royal Institute of Technology,
\newline Stockholm, 100 44, Sweden}
\email{aronw@math.kth.se}
\date{\today}

\begin{abstract}
In this paper,  we explain a sharp phase transition phenomenon which occurs
for $L^p$-Carleman classes with exponents $0<p<1$. In principle, these 
classes are defined as usual, only the traditional $L^\infty$-bounds are 
replaced by corresponding $L^p$-bounds. To set things up rigorously, we work
with the quasinorms 
\[
\nl u\nr_{p,\M}=\sup_{n\geq 0}\frac{\lVert u^{(n)}\rVert_p}{M_n},
\] 
for some weight sequence $\M=\{M_n\}_n$ of positive real numbers, and 
consider as the corresponding $L^p$-Carleman space the completion of a 
given collection of smooth test functions. 
To mirror the classical definition, we add the feature of dilatation invariance 
as well, and consider a larger soft-topology space, the $L^p$-Carleman class.
A particular degenerate instance is 
when $M_n=1$ for $0\le n\le k$ and $M_n=+\infty$ for $n>k$. This would 
give the $L^p$-Sobolev spaces, which were analyzed by Peetre, following
an initial insight by Douady. 
Peetre found that these $L^p$-Sobolev spaces are highly degenerate for 
$0<p<1$. Indeed, the canonical map $W^{k,p}\to L^p$ fails to be injective, 
and there is even an isomorphism
\[
W^{k,p}\cong L^p\oplus L^p\oplus\ldots \oplus L^p,
\]
corresponding to the canonical map $f\mapsto (f,f',\ldots,f^{(k)})$ acting 
on the test functions. This means that e.g. the function and its derivative 
lose contact with each other (they ``disconnect''). Here, we analyze this 
degeneracy for the more general 
$L^p$-Carleman classes defined by a weight sequence $\M$. If $\M$ has some 
reasonable growth and regularity properties, and if the given collection 
of test functions is what we call $(p,\theta)$-\emph{tame}, 
then we find that there 
is a sharp boundary, defined in terms of the weight $\M$: on the one side, 
we get Douady-Peetre's phenomenon of ``disconnexion'', while on the other, 
the completion of the test functions consists of $C^\infty$-smooth 
functions and the canonical map $f\mapsto (f,f',f'',\ldots)$ is 
correspondingly well-behaved in the completion.  We also look at the 
more standard second phase transition, between non-quasianalyticity and 
quasianalyticity, in the $L^p$ setting, with $0<p<1$.
\end{abstract}
\maketitle

\section*{Introduction}
\subsection{The Sobolev spaces for $0<p<1$}
We first survey the properties of the Sobolev spaces with exponents in the 
range $0<p<1$.
These were considered by Jaak Peetre \cite{Peetre} after Adrien Douady 
suggested that they behave very differently from the standard 
$1\le p\le+\infty$ case. For an exponent $p$ with $0<p<1$, and an integer 
$k\geq 0$,
\emph{we define the Sobolev space $W^{k,p}(\R)$ on the real line $\R$ to be the 
abstract completion of the space $C^{k}_0$ of compactly supported $k$ times 
continuously differentiable functions, with respect to the quasinorm} 
\begin{equation}\label{sobolevnorm}
\lVert f \rVert_{k,p}=\left(\lVert f\rVert_p^p + \lVert f'\rVert_p^p+\ldots 
\lVert f^{(k)}\rVert_p^p\right)^{1/p}.
\end{equation}
Here, as a matter of notation, $\|\cdot\|_p$ denotes the quasinorm of $L^p$. 
Defined in this manner, $W^{k,p}$ becomes a quasi-Banach space whose elements 
consist of equivalence classes of Cauchy sequences of test functions.
At first glance, this definition is very natural, and other approaches 
to define the same object in the classical case $p\geq 1$ do not 
generalize. For instance, we cannot use the usual notion of weak derivatives 
to define these spaces, as functions in $L^p$ need not be locally $L^1$ for 
$0<p<1$. 

Some observations suggest that things are not the way we might expect them 
to be. For instance, as a consequence of the failure of local integrability,
for a given point $a\in\R$, the formula for the primitive
\[
F(x)=\int_{a}^{x}f(t)\,\diff t
\] 
cannot be expected to make sense.
A related difficulty is the invisibility of Dirac ``point masses'' in the 
quasinorm of $L^p$. Indeed, if $0<p<1$ and 
$u_\epsilon:=\epsilon^{-1}1_{[0,\epsilon]}$, then as $\epsilon\to 0^+$, 
we have the convergence $u_\epsilon\to0$ in $L^p$ while $u_\epsilon\to\delta_0$ 
in the sense of distribution theory. Here, we use standard notational 
conventions: $\delta_0$ denotes the unit point mass at $0$, and $1_E$ is the 
characteristic function of the subset $E$,  which equals $1$ on $E$ and 
vanishes off $E$. 

\subsection{Independence of the derivatives in Sobolev spaces}
For $1\le p\le+\infty$, we think of the Sobolev space $W^{k,p}$ as a subspace 
of $L^p$, consisting of functions of a specified degree of smoothness.
As such, the identity mapping $\ba:W^{k,p}\to L^p$ defines a canonical 
injection. 
Douady observed that we cannot have this picture in mind when $0<p<1$, as 
the corresponding canonical map $\ba$ fails to be injective.
Peetre built on Douady's observation and showed that this uncoupling (or 
disconnexion) between the derivatives goes even deeper. 
In fact, the standard map $f\mapsto (f,f',\ldots,f^{(k)})$ on test functions 
$f$ defines a topological isomorphism of the completion $W^{k,p}$ onto the 
direct sum of $k+1$ copies of $L^p$. For convenience, we state the 
theorem here.

\begin{thm}[Peetre, 1975]\label{Peetre}
Let $0<p<1$ and $k=1,2,3,\ldots$. Then $W^{k,p}$ is isometrically isomorphic to 
$k+1$ copies of $L^p$:
\begin{equation}\label{isom}
W^{k,p}\cong L^p\oplus L^p\oplus\ldots\oplus L^p.
\end{equation}
\end{thm}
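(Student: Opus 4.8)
The plan is to show that the map $f\mapsto (f,f',\ldots,f^{(k)})$, which is an isometry from $C^k_0$ (with the quasinorm \eqref{sobolevnorm}) into $\bigoplus_{j=0}^k L^p$, has dense range, so that it extends to an isometric isomorphism of the completion $W^{k,p}$ onto the full direct sum. Since the map is already an isometry by the very definition of $\|\cdot\|_{k,p}$, the only thing to prove is surjectivity of the extension, i.e. that $\{(f,f',\ldots,f^{(k)}):f\in C^k_0\}$ is dense in $\bigoplus_{j=0}^k L^p$. Because finite linear combinations and translates are available, and $C^k_0$ is itself dense in each $L^p$ factor, it suffices to approximate an arbitrary ``elementary'' target tuple of the form $(g_0,g_1,\ldots,g_k)$ with each $g_j\in C^k_0$ (or even a single bump placed in one slot with zeros elsewhere) by genuine jets $(f,f',\ldots,f^{(k)})$.

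The key mechanism is the invisibility of concentrated mass in $L^p$ for $0<p<1$, already flagged in the introduction via $u_\epsilon=\epsilon^{-1}1_{[0,\epsilon]}\to 0$ in $L^p$. First I would build, for a target $(0,\ldots,0,g,0,\ldots,0)$ with $g$ in the $j$-th slot, a test function $f$ whose $j$-th derivative is close to $g$ in $L^p$ while all other derivatives $f^{(i)}$, $i\neq j$, are small in $L^p$. The idea is to take $f$ to be a highly oscillatory, rapidly varying function: roughly, one integrates $g$ up $j$ times against a rescaled high-frequency modulation so that the $j$-th derivative reproduces $g$ (up to small error), the lower derivatives $i<j$ pick up large negative powers of the frequency parameter $\lambda$ and hence tend to $0$ in $L^p$, and the higher derivatives $i>j$, though large in amplitude, are supported on a set whose measure shrinks fast enough that the $L^p$ quasinorm (being subadditive with the ``wrong'' scaling for $p<1$) still goes to $0$. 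Concretely one can use building blocks like $\lambda^{-j}\psi(\lambda x)\,\chi(x)$ where $\chi\in C^k_0$ localizes and $\psi$ is chosen so that $\psi^{(j)}$ averages to the desired profile; careful bookkeeping with the Leibniz rule separates the ``main term'' in the $j$-th derivative from lower-order cross terms involving derivatives of $\chi$, which carry extra powers of $\lambda^{-1}$.

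The main obstacle, and the step requiring the most care, is precisely this simultaneous control: making $f^{(j)}$ converge to $g$ while forcing \emph{all} the other $k$ derivatives to $0$ in $L^p$ at once, uniformly enough to sum the contributions over all slots. The subadditivity defect of $\|\cdot\|_p^p$ for $0<p<1$ is what makes it possible --- spikes of height $h$ on a set of measure $m$ contribute $h^p m$, which can be driven to zero even as $h\to\infty$ provided $m\to 0$ fast --- but one must check that the construction for slot $j$ does not spoil the approximations already achieved for slots $j'\neq j$ when one forms the sum $\sum_j f_j$. I would handle this by choosing the frequency parameters $\lambda_j$ for the different slots along a common sequence tending to infinity, and by placing the supports of the building blocks so that the interactions are negligible, then invoke the triangle-type inequality $\|\sum a_i\|_p^p\le\sum\|a_i\|_p^p$ to absorb all error terms. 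Once density is established, the extension of the isometry is automatically a bijective isometry, and \eqref{isom} follows; that the decomposition is \emph{isometric} (not merely topological) is immediate from the definition \eqref{sobolevnorm} of the quasinorm as the $\ell^p$-combination of the $L^p$-quasinorms of the derivatives.
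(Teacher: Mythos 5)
Your overall reduction is sound: the jet map $f\mapsto(f,f',\ldots,f^{(k)})$ is an isometry of $(C^k_0,\|\cdot\|_{k,p})$ into the direct sum, so the theorem is equivalent to density of the jets in $L^p\oplus\cdots\oplus L^p$, and proving density by realizing the elementary tuples $(0,\ldots,0,g,0,\ldots,0)$ is in substance the same as the paper's construction of the lifts $\bb$ and $\bg$ (Sections~\ref{ss:lift-b} and~\ref{ss:lift-g}) followed by iteration. The gap is in the one step that carries all the content, namely the construction of a test function $f$ with $f^{(j)}$ close to $g$ in $L^p$ and \emph{all} other derivatives up to order $k$ small. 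Two points there do not survive scrutiny as sketched. First, if $f^{(j)}$ genuinely ``reproduces'' $g$ on most of its support (as in ``integrate $g$ up $j$ times against a modulation''), then $f^{(j+1)}$ reproduces $g'$ there, and is not small in $L^p$ at all; what is actually needed is that $f^{(j)}$ be a \emph{mollified step approximation} of $g$ -- locally constant off a set of small measure -- so that its own derivatives are concentrated spikes. Closeness to $g$ can then only be in quasinorm, not by pointwise tracking, and this changes the shape of the construction.

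Second, and more seriously, the single-parameter spike bookkeeping ``height $h$ on measure $m$ contributes $h^pm$'' fails as soon as more than one order of derivative above a jump must be controlled. A jump of size $J$ smoothed at one scale $m$ has $i$-th derivative of size about $J/m^i$ on a set of measure about $m$, contributing $J^p m^{1-ip}$ to $\|f^{(j+i)}\|_p^p$; this tends to $+\infty$ as $m\to0$ whenever $ip\ge1$, so already for $i=2$ and $p\ge\tfrac12$ your single frequency/smoothing parameter $\lambda$ cannot make $f^{(j+1)},\ldots,f^{(k)}$ small simultaneously (and the theorem must cover all $0<p<1$ and all $k$). The cure is a \emph{hierarchy} of scales: smooth the jump over a width $a_1$, its corners over $a_2\ll a_1$, those corners over $a_3\ll a_2$, and so on -- precisely the iterated convolutions $H_{a_1}*\cdots*H_{a_{k+1}}$ of Section~\ref{ss:convol}, with the bound $\|\Phi^{(n)}\|_p^p\le 2^{n+1}a_{n+1}^{1-p}/(a_1\cdots a_n)^p$ and the recursive choice of the $a_l$ in Lemma~\ref{lem:mollifier-finite}; it is this multi-scale choice, not the inequality $\|\sum_i u_i\|_p^p\le\sum_i\|u_i\|_p^p$, that makes the errors summable. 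Relatedly, for the slots $i<j$ your prefactor $\lambda^{-j}$ only helps if $\psi$ and all its intermediate derivatives stay bounded, which forces mean-zero normalizations per period; the paper handles the analogous issue by subtracting $\langle g\rangle_\R\,\Phi_\epsilon$ with an invisible mollifier before integrating (Section~\ref{ss:lift-g}). With these repairs -- step-like $j$-th derivative, nested smoothing scales, and mean corrections before taking primitives -- your density argument closes and coincides, in effect, with the paper's proof via the lifts $\bb$ and $\bg$.
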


This decoupling occurs as a result of the availability of approximate 
point masses which are barely visible in the quasinorm. 
As a consequence, if we define Sobolev spaces as completions with respect to 
the Sobolev quasinorm, we obtain highly pathological (and rather useless) 
objects. 

\subsection{A bootstrap argument to control the $L^\infty$-norm
in terms the $L^p$-quasinorms of the higher derivatives}

From the Douady-Peetre analysis of the Sobolev spaces $W^{k,p}$ for exponents
$0<p<1$, we might be inclined to believe that for such small $p$, we always 
run into pathology. 
However, there is in fact an argument of bootstrap type which can save the 
situation if we simultaneously control \emph{the derivatives of all orders}.
To explain the bootstrap argument, we take a (weight) sequence 
$\mathcal{M}=\{M_k\}_{k=0}^{+\infty}$ of positive reals, and define the quasinorm
\begin{equation}
\label{M-norm}
\| f\|_{p,\M}=\sup_{k\geq0}\frac{\| f^{(k)}\|_p}{M_k},\qquad f\in \S,
\end{equation}
on some appropriate linear space $\S\subset C^\infty(\R)$ 
of test functions that we choose to begin with. In the analogous setting 
of periodic 
functions (i.e., with the unit circle $\Te\cong\R/\Z$ in place of $\R$), 
it would be natural to work with the linear span of the periodic complex 
exponentials as $\S$. 
In the present setting of the line $\R$, there is no such canonical choice. 
Short of a natural explicit linear space of functions, we ask instead that 
$\S$ should satisfy a property.

\begin{defn}
($0<p\le1,\,\theta\in\R$)
We say that $f\in C^\infty(\R)$ is
\emph{$(p,\theta)$-tame} if $f^{(n)}\in L^\infty(\R)$ for 
each $n=0,1,2,\ldots$, and
\begin{equation}
\label{eq-tameness}
\limsup_{n\to+\infty}\,(1-p)^n\log\|f^{(n)}\|_\infty\leq \theta.
\end{equation}
Moreover, for subsets $\S\subset C^\infty(\R)$, we say that
$\S$ is \emph{$(p,\theta)$-tame} if every element 
$f\in\S$ is
$(p,\theta)$-tame.
\end{defn}

\begin{rem}
(a) For $\theta<0$, only the trivial function $f=0$ is $(p,\theta)$-tame, 
unless if $p=1$, in which no $(p,\theta)$-tame function exists.
For this reason, in the sequel, \emph{we shall restrict our attention 
to $\theta\ge0$}.
 
\noindent{(b)} Loosely speaking, for $\theta\ge0$, the requirement 
\eqref{eq-tameness} asks that the $L^\infty$-norms of the higher order 
derivatives do not grow too wildly.
We note that for $p$ close to one, $(p,\theta)$-tameness is a very weak 
requirement; indeed, at the endpoint value $p=1$, it is void.
\end{rem}

A natural suitable choice of a $(p,\theta)$-tame collection of test 
functions might be the following Hermite class: 
\[
\S^{\mathrm{Her}}:=
\left\{f:\,\,\,f(x)=\e^{-x^2}q(x),\,\text{ where }\, q\,\,\, 
\text{is a polynomial}
\right\}.
\]
Indeed, a rather elementary argument shows that \eqref{eq-tameness} 
holds with $\theta=0$ for $\S=\S^{\mathrm{Her}}$ 
(for the details, see Lemma~\ref{Lemma-Hermite} below). 
However, it might be the case that not all $f\in\S^{\mathrm{Her}}$ have 
finite quasinorm $\| f\|_{p,\M}<+\infty$ (this depends on the choice of weight 
sequence $\mathcal{M}$). In that case, we would then replace 
$\S^{\mathrm{Her}}$ by its linear subspace
\[
\S^{\mathrm{Her}}_{p,\M}:=\big\{f\in\S^{\mathrm{Her}}:\,
\| f\|_{p,\M}<+\infty\big\},
\]
and hope that this collection of test functions is not too small.  

\medskip

\noindent{\sc The bootstrap argument.} 
We proceed with the bootstrap argument. We assume that our parameters are
confined to the intervals $0<p\le1$ and $0\le\theta<+\infty$.
Moreover, we assume that the collection of test functions $\S$ is 
$(p,\theta)$-tame and
that $\| f\|_{p,\M}<+\infty$ holds for each $f\in\S$. We pick  
a normalized element $f\in\S$ with $\| f\|_{p,\M}=1$. 
Since $1=p+(1-p)$, it follows from the fundamental theorem of Calculus that
for $x\le y$,
\begin{equation}
\lvert f(y)-f(x)\rvert=\bigg|\int_{x}^{y}f'(t)\diff t\bigg|\leq 
\int_x^y|f'(t)|\diff t\le \| f'\|_\infty^{1-p}
\int_{x}^y\lvert f'(t)\rvert^p \diff t\le\| f'\|_\infty^{1-p}
\| f'\|_p^p.
\label{eq-basicineq1.1}
\end{equation}
As $f\in L^p(\R)$, the function $f$ must assume values arbitrarily close to 
$0$ on rather big subsets of $\R$. By taking the limit of such $y$ in 
\eqref{eq-basicineq1.1}, we arrive at 
\begin{equation*}
\lvert f(x)\rvert\le\|f'\|_\infty^{1-p}\|f'\|_p^p,\qquad x\in\R,
\end{equation*} 
which gives that
\begin{equation}
\| f\|_\infty\le\|f'\|_\infty^{1-p}\|f'\|_p^p.
\label{eq-basicineq1.3}
\end{equation}
By iteration of the estimate \eqref{eq-basicineq1.3}, we obtain that for 
$n=1,2,3,\ldots$,
\begin{equation}
\|f\|_\infty\le\|f^{(n)}\|_\infty^{(1-p)^n}\|f^{(n)}\|_p^{p(1-p)^{n-1}}\cdots
\|f'\|_p^p=\|f^{(n)}\|_\infty^{(1-p)^n}\prod_{j=1}^{n}\|f^{(j)}\|_p^{(1-p)^{j-1}p}.
\label{eq-basicineq1.4}
\end{equation}
As it is given that $\|f\|_{p,\M}=1$, we have that $\|f^{(j)}\|_p\le M_j$,
which we readily implement into \eqref{eq-basicineq1.4}:
\begin{equation}
\|f\|_\infty\le\|f^{(n)}\|_\infty^{(1-p)^n}\prod_{j=1}^{n}M_j^{(1-p)^{j-1}p} \qquad
\text{if}\quad \|f\|_{p,\M}=1.
\label{eq-basicineq1.5}
\end{equation}
Finally, we let $n$ approach infinity, so that in view of the 
$(p,\theta)$-tameness assumption \eqref{eq-tameness} and homogeneity, 
we obtain that
\begin{equation}
\lVert f\rVert_\infty\le \e^{\theta}\|f\|_{p,\M}\limsup_{n\to+\infty}
\prod_{j=1}^{n}M_j^{(1-p)^{j-1}p},\qquad f\in\S.
\label{eq-basicineq1.6}
\end{equation}
The estimate \eqref{eq-basicineq1.6} tells us that under the requirement 
\begin{equation}
\kappa(p,\M):=\limsup_{n\to+\infty}\sum_{j=1}^{n}(1-p)^{j}\log M_j<+\infty,
\label{eq-condition.nondeg1}
\end{equation}
we may control the sup-norm of a test function $f\in\S$ in terms
of its quasinorm $\|f\|_{p,\M}$. We will refer to the quantity $\kappa(p,\M)$
as the \emph{$p$-characteristic of the sequence $\M$}.
It follows that the Douady-Peetre disconnexion
phenomenon does not occur if we simultaneously control all the higher 
derivatives 
under \eqref{eq-condition.nondeg1} (provided the test function space 
$\S$ is $(p,\theta)$-tame).
But the condition \eqref{eq-condition.nondeg1} achieves more. To see 
this, we first observe that as the inequality \eqref{eq-basicineq1.4} 
applies to an arbitrary smooth function $f$, and in particular to a 
derivative $f^{(k)}$, for $k=0,1,2,\ldots$:
\begin{equation}
\| f^{(k)}\|_\infty\le\|f^{(n+k)}\|_\infty^{(1-p)^n}\prod_{j=1}^{n}
\|f^{(j+k)}\|_p^{(1-p)^{j-1}p},\qquad n=1,2,3,\ldots.
\label{eq-basicineq1.7}
\end{equation}
Next, we let $n$ tend to infinity in \eqref{eq-basicineq1.7} and use 
homogeneity and $(p,\theta)$-tameness as in \eqref{eq-basicineq1.6}, 
and arrive at
\begin{equation}
\|f^{(k)}\|_\infty\le \|f\|_{p,\M}\limsup_{n\to+\infty}
\| f^{(k+n)}\|_\infty^{(1-p)^{n}}\limsup_{n\to+\infty}
\prod_{j=1}^{n}M_{j+k}^{(1-p)^{j-1}p},\qquad f\in\S.
\label{eq-basicineq1.8}
\end{equation}
Moreover, since 
\[
\sum_{j=1}^n(1-p)^j\log M_{j+k}=(1-p)^{-k}\sum_{j=1}^{n+k}(1-p)^j\log M_j
-(1-p)^{-k}\sum_{j=1}^{k}(1-p)^j\log M_j,
\]
and
\[
(1-p)^n\log \| f^{(k+n)}\|_\infty=(1-p)^{-k}(1-p)^{n+k}\log \|f^{(k+n)}\|_\infty
\]
it follows from \eqref{eq-basicineq1.8} and \eqref{eq-condition.nondeg1} 
that 
\begin{equation}
\|f^{(k)}\|_\infty\le\|f\|_{p,\M}\,\e^{\theta(1-p)^{-k}+p(1-p)^{-k-1}\kappa(p,\M)}
\prod_{j=1}^{k}M_{j}^{-(1-p)^{j-k-1}p},\qquad f\in\S,\,\,\,k=0,1,2,\ldots.
\label{eq-basicineq1.9}
\end{equation}
It is immediate from \eqref{eq-basicineq1.9} that under the summability
condition \eqref{eq-condition.nondeg1}, we may in fact control the 
sup-norm of all
the higher order derivatives, which guarantees that the elements of the 
completion of the test function class $\S$ under the quasinorm
$\|\cdot\|_{p,\M}$ consists of $C^\infty$ functions, and the failure of the 
Douady-Peetre disconnexion phenomenon is complete.
We refer to the argument leading up to \eqref{eq-basicineq1.9} as a
``bootstrap'' because we were able to rid ourselves of the sup-norm control
on the right-hand side by diminishing its contribution in the preceding
estimate and taking the limit. 
\medskip

\begin{rem}
The above argument is inspired by an argument which goes back to work 
of Hardy and Littlewood on Hardy spaces of harmonic functions. The phenomenon 
is coined {\em Hardy-Littlewood ellipticity} in \cite{HedenmalmBorichev}. 
To explain the background, we recall that for a function $u(z)$ 
harmonic in the unit 
disk $\mathbb{D}$, the function $z\mapsto \lvert u(z)\rvert^p$ is 
subharmonic if 
$p\geq1$. As such, it enjoys the mean value estimate
\[
\lvert u(0)\rvert^p\leq \frac{1}{\pi}\int_\mathbb{D}\lvert u(z)\rvert^p
\diff A(z).
\]
A remarkable fact is that this inequality survives even for $0<p<1$  
(with a different 
constant) even though subharmonicity fails. See, e.g., 
\cite[Lemma~4.2]{HedenmalmBorichev}, \cite[Lemma~3.7]{Garnett}, 
and the original
work of Hardy and Littlewood \cite{Hardy1932}. 
The similarity with the bootstrap argument used here is striking if 
we compare with
e.g. \cite[Lemma~3.7]{Garnett}.
\end{rem}

\subsection{The $L^p$-Carleman spaces and classes}
The Carleman class 
$\calC_\M$ associated with the weight sequence $\M$ is 
defined as the linear subspace of $f\in C^\infty(\R)$ for which
\[
\frac{\| f^{(k)}\|_\infty}{M_k} \leq  CA^k,
\]
for some positive constant $A=A_f$ (which may depend on $f$).
The theory for Carleman classes was developed in order to 
understand for which classes of functions the (formal) Taylor series 
at a point uniquely determines the function. Denjoy \cite{Denjoy} provided 
an answer under regularity assumptions on the weight sequence, and 
Carleman \cite{Carleman, Carleman1} proved what has since become 
known as the Denjoy-Carleman Theorem: if $\mathcal{N}$ denotes the largest 
logarithmically convex minorant of $\M$, then $\mathcal{C}_\M$ has this 
uniqueness property if and only if
\[
\sum_{j\geq 0}\frac{N_j}{N_{j+1}}=+\infty.
\]
Th{\o}ger Bang later found a simplified proof of this result \cite{Bang2}.
Bang also found numerous other remarkable results for the Carleman classes.
To name one, he proved that the Bang degree $\mathfrak{n}_f$ of 
$f\in\mathcal{C}_\M([0,1])$, defined as the maximal integer $N\ge0$ such that
\[
\sum_{\log \lVert f\rVert_\infty^{-1}<j\leq N }\frac{M_{j-1}}{M_j}<\e,
\]
is an upper bound for the number of zeros of $f$ on the interval $[0,1]$.
As this result was contained in Bang's thesis \cite{Bangthesis}, written in 
Danish, the result appears not to be known to a wider audience.
For an account of several of the interesting results in \cite{Bangthesis}, 
as well as of some further developments in the theory of quasianalytic 
functions, we refer to the work of Borichev, Nazarov, Sodin, and Volberg 
\cite{Sodin1, Sodin2}.

The present work is devoted to the study of the analogous classes defined in 
terms of $L^p$-norms, mainly for $0<p<1$. In view of the preceding subsection, 
it is natural to select the biggest possible collection of test functions 
so that the bootstrap argument has a chance to apply under the assumption 
\eqref{eq-condition.nondeg1}:
\[
\S_{p,\theta,\M}^\circledast:=
\left\{f\in C^{\infty}(\R): \,\,
\lVert f\rVert_{p,\M}<+\infty\,\,\text{ and } 
\limsup_{n\to+\infty}(1-p)^{n}\log\| f^{(n)}\|_\infty\leq\theta
\right\}.
\]
Here, we keep our standing assumptions that $0<p<1$ and $\theta\ge0$ (many
assertion will hold also at the endpoint value $p=1$).
Then $\S_{p,\theta,\M}^\circledast$ automatically meets the asymptotic 
growth condition \eqref{eq-tameness}, but for $(p,\theta)$-tameness to hold we 
also need for each individual derivative to be bounded. However, the condition
\[
\limsup_{n\to+\infty}(1-p)^{n}\log\| f^{(n)}\|_\infty\leq\theta
\]
guarantees that that $f^{(n)}\in L^\infty$ at least for big positive integers
$n$, say for $n\ge n_0$. But then, since $\|f\|_{p,\M}<+\infty$ and in 
particular $f^{(k)}\in L^p$ for all $k=0,1,2,\ldots$, the estimate 
\eqref{eq-basicineq1.3} gives that $f^{(n-1)}\in L^\infty$ as well. Proceeding
iteratively, we find that $f^{(n)}\in L^\infty$ for all $n=0,1,2,\ldots$ if
$f\in\S_{p,\theta,\M}^\circledast$. This means that all the estimates 
of the
preceding subsection are sound for $f\in\S_{p,\theta,\M}^\circledast$. 
Of course, for very degenerate weight sequences $\M$, it might unfortunately 
happen that $\S_{p,\theta,\M}^\circledast=\{0\}$. 
We proceed to define the $L^p$-Carleman spaces.
 
\begin{defn}
$(0<p<1)$ The $L^p$-Carleman space $W^{p,\theta}_\M$ is the completion of the 
test function class $\S_{p,\theta,\M}^\circledast$ with respect to the 
quasinorm $\|\cdot\|_{p,\M}$.
\end{defn}

In the standard textbook presentations, the Carleman classes are defined for 
a regular weight sequence $\M$ in the same way, only the $L^p$ quasinorm 
is replaced by the $L^\infty$ norm, and the class is to be minimal given the 
following two requirements: 
(i)  the space is contained in the class, and (ii) the class is 
invariant with respect to dilatation \cite{Carleman, HORM1, Katznelson}. 
It is easy to see that claiming that $f\in\mathcal{C}_\M$ is the same as 
saying that $f(t)=g(at)$ for some positive real $a$, where 
\[
\frac{\|g^{(k)}\|_\infty}{M_k} \leq C,\qquad k=0,1,2,\ldots,
\]
for some constant $C$, which we understand as the requirement 
$\|g\|_{\infty,\M}<+\infty$ (with exponent $p=+\infty$). 
This allows us to extend the notion of the Carleman classes for exponents
$0<p<1$ as follows. 
 
\begin{defn}
$(0<p<1)$ The $L^p$-Carleman class $\mathcal{C}^{p,\theta}_\M$ consists of 
all the dilates $f_a(t):=f(at)$ (with $a>0$) of functions 
$f\in W^{p,\theta}_\M$. 
\end{defn}

Here, to avoid unnecessary abstraction, we need to understand that
each element in $W_\M^{p,\theta}$ gives rise to an element of the 
Cartesian product space
$L^p\times L^p\times\cdots$ via the lift of the map 
$f\mapsto (f,f',f'',\ldots)$ initially defined on test functions 
$f\in\S_{p,\M}^\circledast$ (for more details, see the next subsection). 
Moreover, it is easy to see that $f\in W^{p,\theta}_\M$ is uniquely 
determined by the corresponding element in $L^p\times L^p\times\ldots$. 
It is important to note that in the Cartesian product space, the 
dilatation operation is well-defined, so that the 
$L^p$-Carleman class $\mathcal{C}^{p,\theta}_\M$ can be understood as a 
submanifold of $L^p\times L^p\times\cdots$ in the above sense.
Moreover, $\mathcal{C}^{p,\theta}_\M$ is actually a linear subspace, as the 
quasinorm criterion for being in the class is (analogously) 
\[
\frac{\| f^{(k)}\|_p}{M_k} \leq C A^k,
\]
for some positive constants $C$ and $A$, and this kind of bound is closed
under linear combination. 
 
\subsection{Classes of weight sequences}
From this point onward, we will restrict attention to positive, logarithmically 
convex weight sequences, i.e. sequences $\M=\{M_j\}_j$ of positive numbers
such that the function $j\mapsto \log M_j$ is convex. 

We will consider the following notions of regularity for weight sequences.

\begin{defn}\label{def:reg-1} A logarithmically convex sequence 
$\M=\{M_n\}_{n=0}^{+\infty}$ 
with infinite $p$-characteristic $\kappa(p,\M)=+\infty$ is called 
\emph{$p$-regular} if one of the following conditions (i)--(ii) holds:

\noindent (i) $\liminf_{n\to{+\infty}}(1-p)^n\log M_n>0$, or

\noindent (ii) $\log M_n=\mathrm{o}((1-p)^{-n})$ and $n\log n\leq 
(\delta+\mathrm{o}(1))\log M_n$ as $n\to{+\infty}$, for some $\delta<1$. 
\label{defn-1.4.3}
\end{defn}
It is a simple observation that $p$-regular sequences are stable 
under the process of shifts and under 
replacing $M_n$ for a finite number of indices, as long as 
log-convexity is kept.

\begin{rem}\label{rem:001}
(a) We note in the context of Definition \ref{defn-1.4.3} that if $\M$ grows 
so fast that $\kappa(p,\M)=+\infty$ holds, then in particular 
the asymptotic estimate $\log M_n=\mathrm{O}(q^{-n})$ fails as $n\to{+\infty}$ 
for any given $q$ with $1-p<q<1$. 
The second condition in (ii), which says that $n\log n\leq 
(\delta+\mathrm{o}(1))\log M_n$, 
is a rather mild lower bound on $\log M_n$  compared with this exponential
growth along subsequences. 

(b) For logarithmically convex sequences $\M$, the sum
\begin{equation}\label{eq:kappa-convex}
\kappa(p,\M)=\sum_{j=0}^{+\infty}(1-p)^j\log M_j
\end{equation}
is actually convergent to an extended real number in $\R\cup\{+\infty\}$.
\end{rem}

We conclude with a notion of regularity that applies in the regime with 
finite $p$-characteristic.

\begin{defn}\label{def:reg-2}
A logarithmically convex weight sequence $\M$ for which 
$\kappa(p,\M)<+\infty$ is said to be \emph{decay-regular} 
if $M_n/M_{n+1}\ge\epsilon^n$ holds for some positive real $\epsilon$, or 
alternatively, if $\M$ meets the nonquasianalyticity condition
\[
\sum_{n=1}^{+\infty}\frac{M_n}{M_{n+1}}<+\infty.
\]
\end{defn}

\subsection{The three phases} 
We mentioned already the phenomenon that the Carleman classes exhibit the 
phase transition associated with quasianalyticity. Here, the concept of 
quasianalyticity is usually defined in terms of the unique continuation 
property that the (formal) Taylor series at any given point determines the 
function uniquely. Under the regularity condition that the sequence 
$\M=\{M_n\}_n$ is logarithmically convex, it is known classically that 
the \emph{Carleman class $\mathcal{C}_\M=\mathcal{C}_{\M}^{\infty,0}$ is 
quasianalytic if and only if}
\[
\sum_{n=0}^{+\infty}\frac{M_n}{M_{n+1}}=+\infty.
\]
In the small exponent range $0<p<1$ considered here, it turns out that we 
have actually \emph{two phase transitions}: 

\noindent (i) the Douady-Peetre disconnexion barrier, and 

\noindent (ii) the quasianalyticity barrier. 

\medskip

Here, we shall attempt to explore both phenomena.

\medskip

In the degenerate case when $M_n=1$ for $n=0,\ldots,k$ and $M_n=+\infty$ for
$n>k$, the Carleman space $W^{p,\theta}_\M$ does not depend on the parameter 
$\theta\ge0$, and is the same as the Sobolev space $W^{p,k}$,  except that 
it is equipped with another (but equivalent) quasinorm. 
So in this instance, we get the Douady-Peetre disconnexion phenomenon 
\eqref{isom} for $W^{p,\theta}_\M$, while under the bounded $p$-characteristic 
condition \eqref{eq-condition.nondeg1}, the following result shows that it 
does not happen. 

To properly formulate the result, we consider the canonical mapping
\mbox{$\bp:W^{p,\theta}_\M\to L^p\times L^p\times\ldots$}, defined initially on 
the test function space $\S_{p,\theta,\M}^\circledast$
by
\[\label{eq-map-canonical}
\bp f=(f, f', f'', \ldots).
\]
It is natural to replace here the product space $L^p\times L^p\times\ldots$ by
its linear subspace $\ell^\infty(L^p,\M)$ supplied with the standard quasinorm:
\begin{equation}
\|(f_0,f_1,f_2,\ldots)\|_{\ell^\infty(L^p,\M)}:=\sup_{n\ge0}\frac{\|f_n\|_p}{M_n}.
\label{eq-quasinormlinfty}
\end{equation}
Indeed, the linear mapping $\bp:W^{p,\theta}_\M\to \ell^\infty(L^p,\M)$ then 
becomes an isometry. This is obvious for test functions in 
$\S_{p,\theta,\M}^\circledast$, and, then automatically holds for elements 
of the abstract completion as well.
We denote the $n$-th component projection of $\bp$ by $\bp_n$: 
$\bp_n(f_0,f_1,f_2,\ldots):=f_n$.

\begin{thm}\label{thm-smoothmap}
$(0<p<1)$ Suppose that the weight sequence $\M$ is logarithmically 
convex and meets the finite $p$-characteristic condition 
\eqref{eq-condition.nondeg1}. 
Then,  for each $n=0,1,2\ldots$,
$\bp_n$ maps $W_\M^{p,\theta}$ into $C^\infty(\R)$, and the space $W_\M^{p,\theta}$ 
is coupled, in the sense that
\[
\partial\bp_n f=\bp_{n+1}f,\quad f\in W_\M^{p,\theta},
\]
where $\partial$ stands for the differentiation operation.
Moreover, the projection $\bp_0$ is injective, and, in the natural sense, 
the space equals the collection of test functions: 
$W_\M^{p,\theta}=\S_{p,\theta,\M}^\circledast$.
\end{thm}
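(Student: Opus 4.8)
The plan is to run the bootstrap inequality \eqref{eq-basicineq1.9} on Cauchy sequences of test functions; this estimate already does the essential work, since under the standing hypothesis $\kappa(p,\M)<+\infty$ it bounds $\|f^{(k)}\|_\infty$ by a finite constant times $\|f\|_{p,\M}$, uniformly over the space $\S_{p,\theta,\M}^\circledast$. As preliminaries I would record: that $\S_{p,\theta,\M}^\circledast$ is a linear subspace of $C^\infty(\R)$ (the $L^p$-quasinorm is a $p$-norm, hence so is $\|\cdot\|_{p,\M}$ and finiteness is stable under sums, while the tameness bound \eqref{eq-tameness} also passes to sums because $(1-p)^n\log 2\to 0$); that $\|\cdot\|_{p,\M}$ is a genuine quasinorm on $\S_{p,\theta,\M}^\circledast$ (if $\|f\|_{p,\M}=0$ then $f\equiv0$ by continuity), so that $\S_{p,\theta,\M}^\circledast$ embeds isometrically and densely into $W_\M^{p,\theta}$ and in particular $\S_{p,\theta,\M}^\circledast\subseteq\bp_0(W_\M^{p,\theta})$; and that we may normalise $M_0=1$ after passing from $\M$ to $\{M_n/M_0\}_n$, which merely rescales $\|\cdot\|_{p,\M}$ and changes neither $W_\M^{p,\theta}$, nor $\S_{p,\theta,\M}^\circledast$, nor the finiteness of $\kappa(p,\M)$, and preserves log-convexity.

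Next I would fix an element of $W_\M^{p,\theta}$ and represent it by a Cauchy sequence $(f_m)$ in $\S_{p,\theta,\M}^\circledast$ with $\|f_{m+1}-f_m\|_{p,\M}\le2^{-m}$ (achievable after passing to a subsequence), and apply \eqref{eq-basicineq1.9} to $f_1$ and to the differences $f_{m+1}-f_m$. With $D_n:=\e^{\theta(1-p)^{-n}+p(1-p)^{-n-1}\kappa(p,\M)}\prod_{j=1}^{n}M_j^{-(1-p)^{j-n-1}p}$, which is finite for every $n$, this yields $\sum_m\|f_{m+1}^{(n)}-f_m^{(n)}\|_\infty\le D_n\sum_m 2^{-m}<+\infty$ for each fixed $n$, so $f_m^{(n)}$ converges uniformly on $\R$ to a continuous $g_n$, with $\|g_0^{(n)}\|_\infty\le D_n(\|f_1\|_{p,\M}+1)$. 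By the classical theorem on termwise differentiation of uniformly convergent sequences, $g_0\in C^\infty(\R)$ and $g_0^{(n)}=g_n$. Since also $f_m^{(n)}\to\bp_nf$ in $L^p$ (the extension $\bp$ to the completion is an isometry, so this component converges as $M_n$ is a fixed constant), a subsequence converging a.e.\ identifies $\bp_nf=g_n=g_0^{(n)}$. This delivers at once that $\bp_n$ maps $W_\M^{p,\theta}$ into $C^\infty(\R)$ and that $\partial\bp_nf=g_n'=g_{n+1}=\bp_{n+1}f$, the asserted coupling.

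To close the identification $W_\M^{p,\theta}=\S_{p,\theta,\M}^\circledast$ I would check that $g_0\in\S_{p,\theta,\M}^\circledast$, which gives the reverse inclusion $\bp_0(W_\M^{p,\theta})\subseteq\S_{p,\theta,\M}^\circledast$. Finiteness of $\|g_0\|_{p,\M}$ is immediate, as $\|g_0^{(n)}\|_p=\lim_m\|f_m^{(n)}\|_p\le M_n\sup_m\|f_m\|_{p,\M}<+\infty$ (Cauchy sequences are bounded). For $(p,\theta)$-tameness I would feed $\|g_0^{(n)}\|_\infty\le D_n(\|f_1\|_{p,\M}+1)$ into \eqref{eq-tameness}: then $(1-p)^n\log\|g_0^{(n)}\|_\infty\le(1-p)^n\log D_n+(1-p)^n\log(\|f_1\|_{p,\M}+1)$, and a direct computation using $M_0=1$ and the convergence of $\kappa(p,\M)=\sum_{j\ge0}(1-p)^j\log M_j$ (Remark \ref{rem:001}(b)) gives $(1-p)^n\log D_n=\theta+\tfrac{p}{1-p}\big(\kappa(p,\M)-\sum_{j=0}^n(1-p)^j\log M_j\big)\to\theta$, whence $\limsup_n(1-p)^n\log\|g_0^{(n)}\|_\infty\le\theta$. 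For injectivity of $\bp_0$ I would argue: if $\bp_0 f=\bp_0\tilde f$, then by the coupling already proved $\bp_n f=(\bp_0f)^{(n)}=(\bp_0\tilde f)^{(n)}=\bp_n\tilde f$ for every $n$, so $\bp f=\bp\tilde f$, and since $\bp$ is an isometry it is injective, hence $f=\tilde f$.

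The step I expect to be most delicate is the bookkeeping in the last paragraph — verifying that the constants $D_n$ assemble so that $(1-p)^n\log D_n$ tends to exactly $\theta$, which is where $\kappa(p,\M)<+\infty$ is used in full and where normalising $M_0=1$ is convenient — together with the small care required to pass between the $L^p$-limit and the uniform limit via an almost-everywhere convergent subsequence. Everything else is a direct consequence of the bootstrap estimate established before the theorem.
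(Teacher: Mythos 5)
Your proposal is correct and follows essentially the same route as the paper: apply the bootstrap estimate \eqref{eq-basicineq1.9} to differences of test functions in a Cauchy sequence, obtain uniform convergence of all derivatives to conclude smoothness and the coupling $\partial\bp_n f=\bp_{n+1}f$, deduce injectivity of $\bp_0$ from the isometry $\bp$, and verify that the limit is $(p,\theta)$-tame via the convergence of $\sum_j(1-p)^j\log M_j$ to $\kappa(p,\M)$. The only differences are cosmetic points of care (fast Cauchy subsequence, identifying the $L^p$-limit with the uniform limit through an a.e.\ convergent subsequence) which the paper passes over more briskly.
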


The proof of this theorem is supplied in Subsection~\ref{smooth_regime}.

The conclusion of Theorem~\ref{thm-smoothmap} is that under the the strong 
finite $p$-characteristic condition, $W_\M^{p,\theta}$ is a space of smooth 
functions, and, indeed, it is identical with the test function class 
$\S_{p,\theta,\M}^\circledast$.
The situation is drastically different when $\kappa(p,\M)=+\infty$. 

\begin{thm}\label{thm-uncoupling}
$(0<p<1)$
Suppose $\M$ is a $p$-regular sequence with infinite $p$-characteristic 
$\kappa(p,\M)=+\infty$. 
Then $\bp_0$ is surjective onto $L^p(\R)$, and $\bp$ supplies an isomorphism 
\[
W^{p,\theta}_\M\cong L^p(\R)\oplus W^{p,\theta_1}_{\M_1},
\]
where $\M_1$ denotes the shifted sequence 
$\M_1:=\{M_{n+1}\}_n$ and $\theta_1=\theta/(1-p)$. Moreover, 
$\M_1$ inherits the assumed properties of $\M$.
\end{thm}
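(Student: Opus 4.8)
I would prove everything through the map $\Psi$ that sends a test function $f$ to the pair $(f,f')$, reducing the theorem to a single ``disconnexion'' construction. After replacing $M_0$ by $1$ — which changes $\|\cdot\|_{p,\M}$ only by an equivalent quasinorm and, by the stability remark following Definition~\ref{def:reg-1}, preserves $p$-regularity and the condition $\kappa(p,\M)=+\infty$ — one notes that $f\mapsto f'$ carries $\S^\circledast_{p,\theta,\M}$ into $\S^\circledast_{p,\theta_1,\M_1}$ with $\|f'\|_{p,\M_1}\le\|f\|_{p,\M}$, the tameness index passing from $\theta$ to $\theta_1=\theta/(1-p)$ because $\limsup_n(1-p)^n\log\|(f')^{(n)}\|_\infty=(1-p)\limsup_n(1-p)^{n-1}\log\|f^{(n)}\|_\infty$. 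Hence $f\mapsto f'$ extends to a bounded operator $W^{p,\theta}_\M\to W^{p,\theta_1}_{\M_1}$, and together with $\bp_0$ it yields a map $\Psi\colon W^{p,\theta}_\M\to L^p(\R)\oplus W^{p,\theta_1}_{\M_1}$. Equipping the direct sum with the max-quasinorm, $\|\Psi f\|=\max\big(\|f\|_p,\,\sup_{n\ge1}\|f^{(n)}\|_p/M_n\big)=\|f\|_{p,\M}$ on test functions, so $\Psi$ is an isometry onto a closed subspace and the theorem is \emph{equivalent} to the assertion that $\Psi$ has dense range; since the pairs $(g,k)$ with $g\in L^p(\R)$ and $k\in\S^\circledast_{p,\theta_1,\M_1}$ are dense in the target, the whole statement reduces to the following.

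\textbf{Disconnexion Lemma.} \emph{For each $g\in L^p(\R)$, $k\in\S^\circledast_{p,\theta_1,\M_1}$ and $\epsilon>0$ there is $f\in\S^\circledast_{p,\theta,\M}$ with $\|f-g\|_p<\epsilon$ and $\|f^{(n)}-k^{(n-1)}\|_p\le\epsilon M_n$ for all $n\ge1$.} The case $k=0$ already gives that $\bp_0$ is onto $L^p(\R)$.

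\textbf{Proving the Lemma.} Here the $L^p$-pathology is exploited: for $0<p<1$ a smooth bump of height $\sim\delta^{-1}$ and width $\sim\delta$ has $\|\cdot\|_p\sim\delta^{1/p-1}\to0$ while carrying unit integral, so one may alter the value of a function, and then its mean, at negligible $L^p$-cost in the low slots and at a large but \emph{controlled} cost in the higher slots. Concretely: (a) fix an integer $N$ so large that for $n>N$ every crude bound occurring below already lies below $\epsilon M_n$; this is possible because $\kappa(p,\M)=+\infty$ together with log-convexity forces $\log M_n$ to grow essentially like $(1-p)^{-n}$, hence faster than any geometric progression, and $p$-regularity — which is inherited by $\M_1$, see below — makes the choice of $N$ and of the scales quantitative and stable under the one-step shift. (b) With $N$ fixed, start from a truncated primitive of $k$ (a bounded $C^\infty$ function whose $n$-th derivative agrees with $k^{(n-1)}$ up to an $L^p$-error that is $o(1)$ as the truncation radius grows), and then run a Peetre-type iterated multi-scale bump construction, with $N$ nested scales $\delta_1\gg\cdots\gg\delta_N>0$, in the spirit of the proof of Theorem~\ref{Peetre}, to correct the value-slot so that $f\approx g$ in $L^p$ and to correct each of the finitely many slots $n=1,\dots,N$ so that $f^{(n)}\approx k^{(n-1)}$ within $\epsilon M_n$; the mean discrepancy produced by the truncation is absorbed by an ``approximate point mass'' built multiscale so that its low-order derivatives are small as well. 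One checks $f\in L^p$ because $f\approx g\in L^p$, $\|f\|_{p,\M}<+\infty$ by step (a), and — crucially — that the construction can be run so that $\|f^{(n)}\|_\infty\le B^n$ (or even $\le B^n n!$) with $B$ depending only on $\delta_1,\dots,\delta_N$; then $(1-p)^n\log\|f^{(n)}\|_\infty\to0$, while $\limsup_n(1-p)^n\log\|k^{(n-1)}\|_\infty\le(1-p)\theta_1=\theta$, so $\limsup_n(1-p)^n\log\|f^{(n)}\|_\infty\le\theta$ (recall $\theta\ge0$) and $f\in\S^\circledast_{p,\theta,\M}$.

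\textbf{Inheritance, and the main obstacle.} The sequence $\M_1$ is log-convex (a shift of a log-convex sequence), has $\kappa(p,\M_1)=(1-p)^{-1}\big(\kappa(p,\M)-\log M_0\big)=+\infty$, and is again $p$-regular by the stability of $p$-regular sequences under shifts recorded after Definition~\ref{def:reg-1}; and the computation above exhibits $\theta_1=\theta/(1-p)$ as the correct index. The routine parts are the reduction to the Lemma, the isometry property of $\Psi$, and this inheritance. The hard part — the step I expect to be the real obstacle — is the multi-scale construction of step (b) together with the bookkeeping of step (a): one must choose the cut-off $N$ and the nested scales $\delta_1\gg\cdots\gg\delta_N$ so that \emph{simultaneously} the first $N+1$ slots are approximated within $\epsilon M_n$, the unavoidably large derivatives of orders $>N$ still fall below $\epsilon M_n$, and the tameness bound survives; the compatibility of these three requirements is exactly what the growth forced by $\kappa(p,\M)=+\infty$ and the regularity packaged in $p$-regularity are there to guarantee.
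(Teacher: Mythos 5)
Your reduction is sound and in fact follows the paper's own skeleton: the map $f\mapsto(f,f')$ is the paper's operator $\Aop f=(\ba f,\bd f)$, it is an isometry for the max-quasinorm, and the whole theorem amounts to surjectivity; your Disconnexion Lemma is precisely the combined content of the paper's two lifts, the case $k=0$ being the lift $\bb$ (Proposition~\ref{prop-beta}) and the general case the lift $\bg$ (Proposition~\ref{prop-gamma}), while the inheritance of log-convexity, $p$-regularity and $\kappa(p,\M_1)=+\infty$, and the index $\theta_1=\theta/(1-p)$, are as in the paper (your displayed identity for the $\limsup$ has the factor $(1-p)$ where it should be $(1-p)^{-1}$, but the stated conclusion is right). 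The problem is that the proof of the Lemma is where the theorem lives, and your sketch of it contains genuine gaps rather than omitted routine detail.

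Concretely: (i) a construction built from finitely many nested scales $\delta_1\gg\cdots\gg\delta_N$ produces at best a $C^{N-1,1}$ function, not an element of $\S_{p,\theta,\M}^\circledast\subset C^\infty(\R)$, so the derivatives of order $n>N$ are not available to be ``crudely bounded'' and the split into steps (a) and (b) collapses. One is forced to use infinitely many scales (the infinite convolutions $\Phi_{1,\infty}$ of Section~\ref{ss:convol}) and then to verify $\|\Phi^{(n)}\|_p\le\epsilon M_n$ for \emph{every} $n$ simultaneously with the tameness bound $(1-p)^n\log\|\Phi^{(n)}\|_\infty\to0$; this is exactly the invisibility Lemma~\ref{mollifier}, whose proof uses $p$-regularity in an essential way, both through the minorant reduction of Lemma~\ref{lem:minorant} and through the lower bound $n\log n\le(\delta+\mathrm{o}(1))\log M_n$, which absorbs the factorial-type factor $(c_1\cdots c_{n+1})^{-p}$ created by making the scales summable. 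Your supporting claim that $\kappa(p,\M)=+\infty$ plus log-convexity forces $\log M_n$ to grow essentially like $(1-p)^{-n}$ is false as stated (take $\log M_n=(1-p)^{-n}/n$), and the asserted bound $\|f^{(n)}\|_\infty\le B^n$ or $B^nn!$ cannot hold for the function you need: the steepness required to track a step function at all the scales forces $\|f^{(n)}\|_\infty$ to grow like $(a_1\cdots a_{n+1})^{-1}$, i.e.\ comparably to the weights themselves, and tameness must be extracted from $(1-p)^n\log M_n\to0$, not from a Gevrey-type bound independent of $\M$. (ii) For $k\ne0$ there is a second obstruction your sketch does not engage: the primitive $\int_{-\infty}^xk$ is bounded but in general nowhere near small in $L^p$, and a truncation plus a single mean-correcting approximate point mass leaves an $L^p$-norm of the size of the primitive itself. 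The paper's Proposition~\ref{prop-gamma} must subtract off \emph{local} means along a partition $\{I_n\}$ of $\R$ chosen so that $\sum_n|I_n|(\langle f\rangle^{\mathrm{osc}}_{I_n})^p\lesssim\epsilon$, paying for each correction with an invisible mollifier of summable cost. So the step you yourself flag as the ``real obstacle'' is in fact the entire proof, and the two ingredients needed to overcome it --- invisible mollifiers adapted to the Carleman scale and the oscillation-partition correction of the primitive --- are absent from the proposal.
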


A sketch of the proof of Theorem \ref{thm-uncoupling} is supplied in 
Subsection \ref{subsec-bbbg}. As for the formulation, the summand 
$W^{p,\theta_1}_{\M_1}$ on the right-hand side arises as the space
of ``derivatives'' of functions in $W^{p,\theta}_\M$. Here, the reason why 
$\M$ gets replaced by $\M_1$ is due to a one-unit shift in the sequence 
space $\ell^\infty(L^p,\M)$.  Moreover, the reason why $\theta$ gets replaced by
$\theta_1=\theta/(1-p)$ is the corresponding shift in the space of test 
functions $\S_{p,\theta,\M}$ when we take the derivative.

\begin{rem}
Since $\M_1$ inherits all relevant properties from $\M$, the theorem 
can be applied iteratively to obtain an isomorphism
\[
W_\M^{p,\theta}\cong L^p\oplus L^p\oplus\ldots \oplus 
L^p\oplus W_{\M_k}^{p,\theta_k},\quad k\in\mathbb{N},
\]
where $\theta_k=\theta(1-p)^{-k}$. 
\end{rem}

We briefly comment on the remaining transition, between non-quasianalyticity 
and quasianalyticity.
For $\theta=0$, we characterize quasianalyticity for 
$\mathcal{C}_\M^{p,\theta}=\mathcal{C}_\M^{p,0}$ in terms of the quasianalyticity
of the standard Carleman class $\mathcal{C}_{\wtM}$ for a certain 
related sequence $\wtM$. 
Moreover, the classical Denjoy-Carleman theorem supplies criteria for when 
the class $\mathcal{C}_{\wtM}$ is quasianalytic or non-quasianalytic. 
The associated sequence $\wtM=\{\twtM_n\}_n$ is given by
\[
\twtM_n:=\prod_{j=1}^\infty M_{n+j}^{(1-p)^{j-1}p},\quad k=0,1,2,\ldots,
\]
which we recognize as coming from the $L^\infty$-bound of the higher order 
derivatives in \eqref{eq-basicineq1.9}.  

The result runs as follows.

\begin{thm}\label{thm-quasianalytic}
$(0<p<1)$
Assume that $\M$ is logarithmically convex and bounded away from zero. 
If $\kappa(p,\M)<+\infty$, the following holds:

\noindent{\rm(i)} If $\theta>0$, then $\mathcal{C}_\M^{p,\theta}$ is never
quasianalytic.

\noindent{\rm(ii)} 
If $\theta=0$ and $\M$ is decay-regular in the sense of 
Definition~\ref{def:reg-2},
then $\mathcal{C}_\M^{p,0}$ is quasianalytic if and only if 
$\mathcal{C}_{\wtM}$ is quasianalytic.
\end{thm}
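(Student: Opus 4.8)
Both parts are reduced to the classical Denjoy--Carleman dichotomy for standard ($L^\infty$) Carleman classes, by determining which nonzero functions that vanish to infinite order at a point actually belong to $\mathcal{C}_\M^{p,\theta}$. Note first that $\kappa(p,\M)<+\infty$ makes $\log\twtM_n=\tfrac{p}{1-p}\sum_{i>n}(1-p)^i\log M_i$ the tail of a convergent series, so $\twtM_n<+\infty$; together with $\M$ being bounded away from $0$ this makes $\wtM$ a genuine positive sequence, and it is again logarithmically convex, since $n\mapsto\log\twtM_n$ is a fixed convex combination of shifts of the convex sequence $n\mapsto\log M_n$. Thus $\mathcal{C}_{\wtM}$ is a standard Denjoy--Carleman class, for which non-quasianalyticity is equivalent to the presence of a nonzero compactly supported member. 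Finally, $\mathcal{C}_\M^{p,\theta}$ is translation invariant and, by Theorem~\ref{thm-smoothmap}, a space of $C^\infty$-functions; so quasianalyticity of $\mathcal{C}_\M^{p,\theta}$ means exactly that it contains no nonzero function with a point of infinite flatness.

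\emph{Part \textup{(ii)}.}  Specializing the bootstrap bound \eqref{eq-basicineq1.8} to $\theta=0$ gives $\|f^{(k)}\|_\infty\le\|f\|_{p,\M}\twtM_k$ for $f$ in the test class (because $\limsup_n\|f^{(n+k)}\|_\infty^{(1-p)^n}=1$ when $\theta=0$), and passing to dilates yields $\mathcal{C}_\M^{p,0}\subseteq\mathcal{C}_{\wtM}$; hence quasianalyticity of $\mathcal{C}_{\wtM}$ forces that of $\mathcal{C}_\M^{p,0}$.  For the converse, suppose $\mathcal{C}_{\wtM}$ is non-quasianalytic. If $M_n/M_{n+1}\ge\epsilon^n$, then $\rho_i:=\log(M_{i+1}/M_i)=\mathrm{O}(i)$ (using log-convexity for the lower bound), and the identity $\log\twtM_k-\log M_k=\sum_{i\ge k}(1-p)^{i-k}\rho_i$ gives $\twtM_k\le C B^k M_k$; a nonzero compactly supported $g\in\mathcal{C}_{\wtM}$ then satisfies $\|g^{(k)}\|_p\le|\supp g|^{1/p}\|g^{(k)}\|_\infty\le C'(AB)^kM_k$, while $(p,0)$-tameness of $g$ is automatic because $(1-p)^k\log\twtM_k=\tfrac{p}{1-p}\sum_{i>k}(1-p)^i\log M_i\to0$; so $g\in\mathcal{C}_\M^{p,0}$, which is therefore non-quasianalytic.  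If instead $\sum_n M_n/M_{n+1}<+\infty$, then the standard class $\mathcal{C}_\M$ is itself non-quasianalytic, so it contains a nonzero compactly supported $h$ with $\|h^{(k)}\|_\infty\le CA^kM_k$; then $\|h^{(k)}\|_p\le|\supp h|^{1/p}CA^kM_k$ and $(1-p)^k\log M_k\to0$ gives $(p,0)$-tameness, so $h\in\mathcal{C}_\M^{p,0}$ --- and in this regime $\mathcal{C}_{\wtM}$ is non-quasianalytic as well, since $\twtM_k/\twtM_{k+1}=\exp(-p\sum_{l\ge0}(1-p)^l\rho_{k+1+l})\le\exp(-\rho_{k+1})=M_{k+1}/M_{k+2}$, so the asserted equivalence holds trivially.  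This proves \textup{(ii)}.

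\emph{Part \textup{(i)}.}  Here one must produce, for every admissible $\M$ and every $\theta>0$, a nonzero $f\in C^\infty(\R)$ with a point of infinite flatness, with $\|f^{(k)}\|_p\le CA^kM_k$ for all $k$ and $\limsup_k(1-p)^k\log\|f^{(k)}\|_\infty\le\theta$; such an $f$ lies (after a dilation) in $\mathcal{C}_\M^{p,\theta}$ and witnesses non-quasianalyticity.  Since $\M$ is bounded below it is enough to keep $\|f^{(k)}\|_p$ of order $A^k$, and this is where $p<1$ enters: in $L^p$, $p<1$, extremely narrow spikes are almost invisible, so a derivative can be huge in sup-norm yet small in $L^p$-quasinorm.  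The construction takes $f=\sum_{m\ge1}c_m\,\psi(\lambda_m(x-b_m))$, a superposition of rescaled copies of a fixed bump $\psi$ with pairwise disjoint supports accumulating at a point, with scales $\lambda_m\uparrow+\infty$ and amplitudes $c_m\downarrow0$ chosen so that $\max_m c_m\lambda_m^k$ --- which controls $\|f^{(k)}\|_\infty=\|\psi^{(k)}\|_\infty\max_m c_m\lambda_m^k$ --- stays within the tameness budget $\mathrm{e}^{\theta(1-p)^{-k}}$ (here $\theta>0$ is indispensable, as it is precisely what permits $\log\lambda_m\to+\infty$; with $\theta=0$ the construction must fail, consistently with \textup{(ii)} and the quasianalyticity of $\mathcal{C}_\M$ for $M_n\equiv1$), while simultaneously the partial sums of $\|f^{(k)}\|_p^p=\|\psi^{(k)}\|_p^p\sum_m c_m^p\lambda_m^{kp-1}$ remain $\mathrm{O}(A^{kp})$.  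Balancing these two requirements is the genuine difficulty of the theorem and the step I expect to require real work: exhausting the sup-norm budget at scale $\lambda_{m(k)}$ costs $\mathrm{e}^{p\theta(1-p)^{-k}}\lambda_{m(k)}^{-1}$ in the $L^p$-sum, forcing $\log\lambda_m$ to grow at a doubly exponential rate in $m$ and $c_m$ to be correspondingly tiny, and one needs a sufficiently careful (Legendre-transform type) calibration of $(\lambda_m,c_m)$ so that every $L^p$-partial sum stays of order $A^{kp}$ while the tameness budget is never exceeded; granting such a calibration, $f\in C^\infty$ follows from $c_m\lambda_m^k\to0$ for each fixed $k$, and $f$ is flat at the accumulation point.
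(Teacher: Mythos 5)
Your part (ii) is correct. The inclusion $\mathcal{C}_\M^{p,0}\subseteq\mathcal{C}_{\wtM}$ via the bootstrap bound is exactly the paper's argument for one implication, and your converse differs from the paper's in the case $M_n/M_{n+1}\ge\epsilon^n$: you prove $\twtM_k\le CB^kM_k$ from the identity $\log\twtM_k-\log M_k=\sum_{i\ge k}(1-p)^{i-k}\log(M_{i+1}/M_i)$ and then transfer a compactly supported member of $\mathcal{C}_{\wtM}$ directly into $\mathcal{C}_\M^{p,0}$ (compact support gives the $L^p$ bounds, $\kappa(p,\M)<+\infty$ gives tameness), whereas the paper reruns the infinite-convolution construction of Section~\ref{ss:convol} with $a_n=\twtM_{n-2}/\twtM_{n-1}$ and uses the ratio bound to control $\sum_{j>n}a_j$. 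Your route is legitimate and arguably more economical.

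Part (i), however, has a genuine gap, and not only because the key construction is left as a plan. The ansatz itself --- a superposition $f=\sum_m c_m\psi(\lambda_m(x-b_m))$ of disjointly supported bumps accumulating at a point --- cannot work for all weights admitted by the theorem. Take $M_n\equiv 1$ (log-convex, bounded below, $\kappa(p,\M)=0$): membership in $\mathcal{C}_\M^{p,\theta}$ requires $\|f^{(k)}\|_p\le CA^k$ for a single $A$ and all $k$, and since the supports are disjoint, already the single term of index $m$ forces $c_m^p\lambda_m^{kp-1}\le C^pA^{kp}$ for every $k$, hence $\lambda_m\le A$ whenever $c_m\ne0$. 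But flatness at a finite accumulation point requires the support lengths $\sim\lambda_m^{-1}$ to shrink to $0$, i.e.\ $\lambda_m\to+\infty$: contradiction. The same obstruction appears for any $\M$ of at most geometric growth, so no Legendre-type calibration of $(\lambda_m,c_m)$ can rescue the scheme; the $\theta$-budget only relaxes the $L^\infty$ side, while the fatal constraint sits on the $L^p$ side at a fixed scale. The paper's mechanism is different: one compactly supported function $\Phi=H_{a_1}*H_{a_2}*\cdots$ whose widths $a_n\asymp\twtM_{n-1,\theta}/\twtM_{n,\theta}$ decay like $\e^{-c(1-p)^{-n}}$ precisely because $\theta>0$ (hence $\{a_n\}\in\ell^1$ and the support is compact); then $\Phi^{(n)}$ consists of $2^n$ spikes whose height $\sim(a_1\cdots a_{n+1})^{-1}$ and width $\sim a_{n+1}$ are tuned jointly in $n$, so that in the estimate \eqref{eq-elest1.004} the $\e^{\theta(1-p)^{-n}}$ factors cancel and $\|\Phi^{(n)}\|_p$ stays of order $A^nM_n$, while $\|\Phi^{(n)}\|_\infty\approx\twtM_{n,\theta}$ yields exactly the tameness parameter of order $\theta$. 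A fixed family of bump scales serving all derivative orders simultaneously cannot reproduce this $n$-adaptive spike structure, which is the real content of (i).
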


Finally, we comment on the dependence of the 
classes on the parameter $\theta$ in the smooth context of Theorem 
\ref{thm-smoothmap}.

\begin{thm}\label{theta-thm}$(0<p<1)$
Let $\M$ be an increasing, log-convex sequence such that 
$\kappa(p,\M)<+\infty$. Let $0\leq \theta<\theta'$. Then the inclusion 
$W_\M^{p,\theta}\subset W_\M^{p,\theta'}$ is strict: 
$W_\M^{p,\theta}\ne W_\M^{p,\theta'}$.
\end{thm}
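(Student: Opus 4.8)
The plan is to use Theorem~\ref{thm-smoothmap} to turn the assertion into an elementary statement about the test function classes, and then to exhibit a single separating function built from a Denjoy--Carleman type extremal bump. Since $\kappa(p,\M)<+\infty$, Theorem~\ref{thm-smoothmap} applies to both $\theta$ and $\theta'$ and identifies the image of $W_\M^{p,\vartheta}$ under the isometric injection $\bp$ with $\{\bp f:\ f\in\S_{p,\vartheta,\M}^\circledast\}$ for $\vartheta\in\{\theta,\theta'\}$. As $\bp$ is injective and $\S_{p,\theta,\M}^\circledast\subseteq\S_{p,\theta',\M}^\circledast$ trivially (the $\limsup$-constraint weakens and the quasinorm is unchanged), it suffices to produce $f\in C^\infty(\R)$ with $\|f\|_{p,\M}<+\infty$ that is $(p,\theta')$-tame but not $(p,\theta)$-tame. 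Fix $\theta''$ with $\theta<\theta''<\theta'$; the target is then an $f$, with all derivatives in $L^p\cap L^\infty$, whose ``tame index'' $\limsup_n(1-p)^n\log\|f^{(n)}\|_\infty$ equals $\theta''$, while $\|f^{(n)}\|_p$ stays comparable to a constant.

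Put $\beta:=(1-p)^{-1}>1$ and $c:=p\theta''>0$, let $a_j:=\exp(-c\beta^j)$, and let $\mu_j$ be normalized Lebesgue measure on $[0,a_j]$. Since $\sum_j a_j<+\infty$, the infinite convolution $F:=\mu_0*\mu_1*\mu_2*\cdots$ is a probability density with compact support; the bound $|\widehat F(\xi)|\le\prod_j\min\{1,2/(a_j|\xi|)\}$ shows $\widehat F$ decays faster than any power, so $F\in C_0^\infty(\R)$. Writing $F^{(n)}=\mu_0'*\cdots*\mu_{n-1}'*g_n$ with $\mu_j'=a_j^{-1}(\delta_0-\delta_{a_j})$ and $g_n:=\mu_n*\mu_{n+1}*\cdots$, we see that $F^{(n)}$ is a signed sum of $2^n$ translates of $(a_0\cdots a_{n-1})^{-1}g_n$, the shifts being the subset sums of $\{a_0,\dots,a_{n-1}\}$. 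The decisive point is that $a_{j+1}/a_j=\exp(-c(\beta-1)\beta^j)\to0$, so that for large $n$ one has $\sum_{j>n}a_j<a_n$, whence $\|g_n\|_\infty=a_n^{-1}$ (the plateau of $\mu_n$ survives) and $|\supp g_n|<2a_n$; moreover the ``unshifted'' translate already sits isolated on $[\sum_{j>n}a_j,\,a_n]$. Thus, for large $n$,
\[
(a_0\cdots a_n)^{-1}\le\|F^{(n)}\|_\infty\le 2^n(a_0\cdots a_n)^{-1},
\qquad
\|F^{(n)}\|_p^p\le 2^n(a_0\cdots a_{n-1})^{-p}\,\|g_n\|_p^p\le 2^{n+1}a_n^{1-p}(a_0\cdots a_{n-1})^{-p},
\]
the second line using the quasi-subadditivity $\|\sum h_S\|_p^p\le\sum\|h_S\|_p^p$ valid for $0<p<1$. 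The first line gives $\limsup_n(1-p)^n\log\|F^{(n)}\|_\infty=c\beta/(\beta-1)=c/p=\theta''$. In the second, the powers of the $a_j$ cancel exactly --- this is the arithmetic that decouples the two norms --- leaving $\|F^{(n)}\|_p\le C_1\,2^{n/p}$ for all $n$, with $C_1$ absolute once enlarged to absorb the finitely many small $n$.

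Now set $f:=A\,F(\sigma\,\cdot)$ with $\sigma:=2^{-1/p-1}$ and $A>0$ to be chosen. Being a compactly supported smooth function, $f$ has all derivatives in $L^p\cap L^\infty$, and dilation leaves the tame index unchanged, so $\limsup_n(1-p)^n\log\|f^{(n)}\|_\infty=\theta''\in(\theta,\theta']$; hence $f\in\S_{p,\theta',\M}^\circledast\setminus\S_{p,\theta,\M}^\circledast$. Moreover $\|f^{(n)}\|_p=A\sigma^{n-1/p}\|F^{(n)}\|_p\le A\sigma^{-1/p}C_1(2^{1/p}\sigma)^n=A\sigma^{-1/p}C_1\,2^{-n}\le A\sigma^{-1/p}C_1$, since $2^{1/p}\sigma=\tfrac12$; as $\M$ is increasing, $M_n\ge M_0>0$, so choosing $A$ with $A\sigma^{-1/p}C_1\le M_0$ yields $\|f^{(n)}\|_p\le M_n$ for every $n$, i.e.\ $\|f\|_{p,\M}\le1$. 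By the reduction of the first paragraph, the images of $W_\M^{p,\theta}$ and $W_\M^{p,\theta'}$ under $\bp$ differ, so $W_\M^{p,\theta}\ne W_\M^{p,\theta'}$.

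The step I expect to require the most care is the two-sided norm control of $F^{(n)}$: verifying that for large $n$ the plateau of $g_n$ is not eroded (so $\|g_n\|_\infty=a_n^{-1}$ exactly), that the unshifted spike is isolated (giving the lower bound on $\|F^{(n)}\|_\infty$), and that the support of $g_n$ has length below $2a_n$ --- all of which hinge on the doubly-exponential spacing $\beta^j$ forcing $\sum_{j>n}a_j\ll a_{n-1}$. The exact cancellation of the $a_j$'s in the $L^p$-estimate, which produces the $2^{n/p}$ growth independently of $\theta''$ and thereby separates the $L^\infty$-growth $\sim e^{\theta''\beta^n}$ from the $L^p$-growth, is the concrete manifestation of the phase-transition mechanism and should be double-checked; the remaining points (the Fourier-side proof that $F\in C^\infty$, the small-$n$ bookkeeping, and the fact that $\mu_j'$ is the distributional derivative of the density of $\mu_j$) are routine.
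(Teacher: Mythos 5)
Your argument is correct, and its skeleton matches the paper's: both proofs reduce the statement to the level of test functions by showing that every element of $W_\M^{p,\theta}$ inherits the $(p,\theta)$-tameness bound (you do this by invoking Theorem~\ref{thm-smoothmap}; the paper rederives it directly from \eqref{eq-basicineq1.9}), and then exhibit a compactly supported smooth function of finite $\|\cdot\|_{p,\M}$-quasinorm whose tame index lies strictly between $\theta$ and $\theta'$. The difference is in the witness function. The paper recycles the bump $\Phi_{\theta'}$ from the proof of Theorem~\ref{thm-quasianalytic}(i), whose convolution widths are tied to the associated sequence $\wtM_{\theta'}$, and quotes \eqref{eq-tameness-theta} for $(1-p)^n\log\|\Phi^{(n)}\|_\infty\to\theta'$. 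Your construction is self-contained and weight-independent: widths $a_j=\exp\bigl(-p\theta''(1-p)^{-j}\bigr)$, with the exact cancellation $a_n^{1-p}(a_0\cdots a_{n-1})^{-p}=\mathrm{const}$ yielding $\|F^{(n)}\|_p\lesssim 2^{n/p}$, which you then kill by the dilation $\sigma=2^{-1/p-1}$ together with $M_n\ge M_0$ (this is where the hypothesis that $\M$ is increasing enters, playing the role of the paper's ``bounded away from zero''). Two features of your route are worth highlighting. First, you prove a genuine lower bound $\|F^{(n)}\|_\infty\ge(a_0\cdots a_n)^{-1}$ via the isolated unshifted translate of $g_n$ (the doubly exponential decay of the $a_j$ makes the plateau survive); this is exactly what certifies that $(p,\theta)$-tameness fails, a point which in the paper rests on \eqref{eq-tameness-theta}, itself derived only from the upper estimate \eqref{eq-elest1.002}. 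Second, your dilation step explicitly removes the geometric factor $2^{n/p}$ from the $L^p$-bounds before comparing with $\M$, whereas in the paper such factors are harmless for the class $\calC_\M^{p,\theta'}$ but are not addressed when the function is asserted to lie in $\S_{p,\theta',\M}^\circledast$. In short, the paper's proof is shorter because it leans on the quasianalyticity construction; yours buys an explicit, two-sided controlled separating function at the cost of a somewhat longer computation, and both correctly use the monotonicity of $\M$ only to bound $M_n$ from below.
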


We do not know whether such a strict inclusion holds in the 
uncoupled regime when $\kappa(p,\M)=+\infty$. 
It remains a possibility that the spaces are then so large that the 
parameter $\theta$ is not felt. In any case, we are able to show that
(Proposition \ref{prop:inclusion-tail-decay}) 
\begin{equation}
c_0(L^p,\M)\subset\bp W_\M^{p,\theta}\subset \ell^\infty(L^p,\M).
\label{eq-containment1}
\end{equation}
Here, $c_0(L^p,\M)$ denotes the subspace of $\ell^\infty(L^p,\M)$
consisting of sequences $(f_0,f_1,f_2,\ldots)$ with
\[
\lim_{n\to+\infty}\frac{\|f_n\|_p}{M_n}=0. 
\]

\begin{rem}
(a) In view of the Douady-Peetre disconnexion phenomenon, the fact that for
$0<p<1$, $L^p$ functions fail to define distributions is a serious
obstruction. An alternative approach is to consider the real Hardy spaces 
$H^p$ in place of $L^p$, since $H^p$ functions automatically define 
distributions. The drawback of that approach is that for $p=1$, $H^1$ is 
substantially smaller than $L^1$. Our theme here is to keep $L^p$ and to let
a bootstrap argument (involving infinitely many higher order derivatives) 
take care of the smoothness, and to explore what happens
when the bootstrap argument fails to supply appropriate bounds.

(b) A word on the title. The term {\em a critical topology} used here is 
borrowed from Beurling's work \cite{Beuart1}, where another phase transition 
is the object of study. 
\end{rem}

\section{Sobolev spaces: Peetre's proof and failure 
of embedding}\label{sec-sobolev}

\subsection{Sobolev spaces for $0<p<1$}
We fix a number $p$ with $0<p<1$ and an 
integer $k\geq0$. 
Following Peetre \cite{Peetre} we consider the Sobolev space 
$W^{k,p}=W^{k,p}(\R)$, 
defined as the abstract completion of $C^k_0(\R)$
with respect to the quasinorm
\begin{equation}
\lVert f\rVert_{k,p}=\left(\lVert f \rVert_p^p+\lVert f' \rVert_p^p+\ldots 
+ \lVert f^{(k)} \rVert_p^p\right)^{1/p}.
\label{eq-sobolevnorm2}
\end{equation}
The resulting space $W^{k,p}$ is then a quasi-Banach space. Here, 
$C^k_0(\R)$ denotes the space of compactly supported functions in $C^k(\R)$.

\begin{rem} 
In this paper we shall mostly work on the entire line. If at some place 
we consider spaces on bounded intervals, this will be explicitly mentioned. 
The definition of $W^{k,p}(I)$ for a general interval $I$ is entirely analogous.
\end{rem}

The space $W^{k,p}$ comes with two canonical mappings, 
$\ba=\ba_k:W^{k,p}\to L^p$, and $\bd=\bd_k: W^{k,p}\to W^{k-1,p}$. 
These are both initially defined for test functions $f\in C^{k}_0(\R)$ by
\[
\ba f=f,\quad \text{and}\quad \bd f=f'.
\]
The mappings $\ba$ and $\bd$ are bounded and densely defined, and hence
extend to bounded operators on the entire space $W^{k,p}$.

\subsection{Douady-Peetre disconnexion}
We begin this section by considering a simple example which explains how a 
crucial feature differs in the setting of $0<p<1$ as compared to the 
classical Sobolev space case. 

We are used to thinking of $W^{k,p}$ as being a certain subspace of $L^p$, 
consisting of functions that are sufficiently smooth. 
As mentioned in the introduction, the first observation, made by Douady, 
is that this is not the right way to think when $0<p<1$. Indeed, the 
canonical map $\ba$ is not injective.

\begin{prop}[Douady]
There exists $f\in W^{1,p}([0,1])$ such that
\[
\ba f=0,\quad \bd f=1.
\]
\end{prop}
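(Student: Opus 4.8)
The plan is to construct explicitly a Cauchy sequence $(f_j)$ in $C^1_0([0,1])$ with respect to the quasinorm $\lVert\cdot\rVert_{1,p}$ whose ``value part'' $f_j$ tends to $0$ in $L^p$ while the ``derivative part'' $f_j'$ tends to the constant function $1$ in $L^p$. The guiding idea is the one already flagged in the introduction: in $L^p$ with $0<p<1$, a tall thin spike $\epsilon^{-1}1_{[0,\epsilon]}$ has quasinorm $\epsilon^{1/p-1}\to 0$, so it is ``invisible''. We want $f_j'$ to look like $1$ plus a large number of such invisible spikes of alternating sign, arranged so that the antiderivative $f_j$ is itself small in $L^p$.

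Concretely, I would partition $[0,1]$ into $N$ subintervals of length $1/N$ and let $f_j$ be a sawtooth-type function: on each subinterval it first rises steeply and then descends with slope close to $1$ (or some similar zig-zag), smoothed at the corners so that $f_j\in C^1_0$. The steep rises contribute a derivative that is large in sup-norm but, being supported on a total set of measure $O(\delta)$ for a small parameter $\delta$, contributes $O(\delta^{1-p})$ to $\lVert f_j'-1\rVert_p^p$; meanwhile, because $f_j$ oscillates with amplitude $O(1/N)$, one gets $\lVert f_j\rVert_p \le O(1/N)$. Letting $N\to\infty$ (and $\delta=\delta_N\to 0$ appropriately) gives $\lVert f_j\rVert_p\to 0$ and $\lVert f_j'-1\rVert_p\to 0$, hence $(f_j)$ is Cauchy and represents an element $f\in W^{1,p}([0,1])$ with $\ba f=0$ and $\bd f=1$. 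One should take care at the endpoints so that $f_j$ has compact support in $(0,1)$ (or is at least admissible as a test function for the interval), which costs only another boundary layer of small measure.

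The routine but slightly fiddly part is choosing the smoothing of the corners so that $f_j$ is genuinely $C^1$ without spoiling the two estimates; this is handled by replacing each kink by a short smooth transition on an interval of length $\eta_N$ with $\eta_N$ tending to $0$ fast enough that the extra derivative mass it introduces is still $o(1)$ in $L^p$. The main conceptual obstacle — really the whole point of the phenomenon — is to see that one can \emph{simultaneously} force $f_j\to 0$ and $f_j'\to 1$; this works precisely because the map $f\mapsto f'$ on test functions is, in the $\lVert\cdot\rVert_{1,p}$ quasinorm, not controlled from below by $\lVert f\rVert_p$, so the kinetic energy of the derivative can be ``hidden'' in thin spikes that the primitive barely registers. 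Once the construction is in place, the verification that the sequence is Cauchy in $\lVert\cdot\rVert_{1,p}$ and that the two component limits are as claimed is a direct computation with the explicit piecewise formulas.

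I expect the proof in the paper may instead phrase this abstractly (e.g.\ via the general isomorphism of Theorem~\ref{Peetre}, of which this proposition is the simplest case $k=1$), but the hands-on spike construction above is self-contained and gives exactly the required $f$.
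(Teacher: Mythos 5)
Your construction is essentially the paper's own proof: a sawtooth whose slope equals $1$ on most of the interval and whose compensating steep pieces have total measure tending to $0$, so that $\lVert f_j\rVert_p\to 0$ while $\lVert f_j'-1\rVert_p^p=O(\delta^{1-p})\to 0$, exactly the estimate used in the paper (with $\delta=j\epsilon_j$). The only differences are cosmetic: the paper skips the corner-smoothing by invoking Peetre's Lemma~2.1 to allow derivatives with jumps, and your phrase ``rises steeply and then descends with slope close to $1$'' has the two roles swapped, though your stated estimates make the intended picture clear.
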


This would suggest that there ought to exist functions that vanish 
identically but nevertheless the derivative equals the nonzero constant $1$.
This is of course absurd, and the right way to think about it is to realize
that in the completion, the function and its derivative lose contact, 
they disconnect.

\begin{proof} 
A small argument (see \cite[Lemma~2.1]{Peetre}) shows that  we are allowed 
to work with functions whose derivatives have jumps. 
We let $\{\epsilon_j\}_j$ be a sequence of positive reals, such that 
$j \epsilon_j\to0$ as $j\to+\infty$, and define $f_j$ on the interval 
$[0,\frac{1}{j}+\epsilon_j]$ by
\[
f_j(x)=\begin{cases}x, & 0\leq x \leq \frac{1}{j}, \\ 
(\frac{1}{j}+\epsilon_j-x)/(j\epsilon_j), & 
\frac{1}{j}<x\leq\frac{1}{j}+\epsilon_j,
\end{cases}
\]
and extend it \emph{periodically} to $\R$ with period $\frac{1}{j}+\epsilon$. 
The resulting function $f_j$ will be a skewed saw-tooth 
function that rises slowly with slope $1$ and then drops steeply. 
By differentiating $f_j$, we have that
\[
1-f_j'(x)=
\begin{cases} 0, & 0\leq x<\frac{1}{j},
\\ 
1+\frac{1}{j \epsilon_j},& 
\frac{1}{j}<x< \frac{1}{j}+\epsilon_j.
\end{cases}
\]
Since $f_j$ assumes values between $0$ and $\frac{1}{j}$, it is clear 
that $f_j\to 0$ as $j\to+\infty$ in $L^\infty$ and hence in $L^p$. 
Within the interval $[0,1]$, there are at most $j$ full periods of the
function $f_j$,
which allows us to estimate
\[
\int_{[0,1]}|1-f_j'(x)|^p\diff x\le 
j\epsilon_j\bigg(1+\frac{1}{j\epsilon_j}\bigg)^p\le 
j\epsilon_j(j\epsilon_j)^{-p}
=(j\epsilon_j)^{1-p}\to0\quad\text{as}\,\,\,\,j\to+\infty.
\]
In view of the above observations, $f_j\to0$ while $f_j'\to1$, both 
in the quasinorm of $L^p$, as $j\to+\infty$. 
In particular, $\{f_j\}_j$ is a Cauchy sequence, and if we let $f$ 
denote the abstract limit in the completion, we find that 
$\ba f=0$ while $\bd f=1$.
\end{proof}

\subsection{The isomorphism and construction of the 
canonical lifts}
\label{subsec-isomconstr}
We fix an integer $k=1,2,3,\ldots$ and an exponent $0<p<1$. 

The space $L^p\oplus W^{k-1,p}$ consists of pairs $(g,h)$, where $g\in L^p$ and
$h\in W^{k-1,p}$, and we equip it with the quasinorm
\[
\lVert (g,h)\rVert^p=\lVert g\rVert^p_p+\lVert h\rVert_{k-1,p}^p,
\qquad g\in L^p,\,\,\,h\in W^{k-1,p}.
\]
From the definition of the norm \eqref{eq-sobolevnorm2}, we see that 
the operator $\Aop:\,W^{k,p}\to L^p\oplus W^{k-1,p}$ given by 
$\Aop f:=(\ba f,\bd f)$ is an \emph{isometry}. Indeed, for $f\in C^k_0(I)$, we 
have that
\begin{equation}
\|\Aop f\|^p=\|(\ba f,\bd f)\|^p=\|\ba f\|^p_p+\|\bd f\|^p_{k-1,p}=
\|f\|_p^p+\|f'\|^p_{k-1,p}=\|f\|_{k,p}^p,
\label{eq-isom1.1}
\end{equation}
and this property survives the completion process. If $\Aop$ can be shown to
be surjective, then it is an isometric isomorphism 
$\Aop:\,W^{k,p}\to L^p\oplus W^{k-1,p}$. Proceeding iteratively with $W^{k-1,p}$,
we obtain the desired decomposition, since clearly $W^{0,p}=L^p$.

To obtain the surjectivity of $\Aop$, we shall construct two canonical lifts, 
$\bb:\, L^p\to W^{k,p}$ and 
$\bg:\,W^{k-1,p}\to W^{k,p}$ of $\ba$ and $\bd$, respectively. 
These are injective mappings, from $L^p$ and $W^{k-1,p}$ to $W^{k,p}$, 
respectively, satisfying certain relations with $\ba$ and $\bd$. The 
properties of
these are summarized in the following lemma (the notation $\id_X$ stands 
for the 
identity mapping on the space $X$). The details of the construction are 
postponed 
until Section~\ref{ss:construct}.

\begin{lem}
\label{lifts} 
For each $k=1,2,3,\ldots$, there exist bounded linear mappings 
$\bb: L^p\to W^{k,p}$ and $\bg: W^{k-1,p}\to W^{k,p}$, such that

\begin{equation*}
\ba\bb= 
\id_{L^p},\quad
\bd\bg=\id_{W^{k-1,p}},\quad
\bd\bb=
0,\quad 
\ba\bg=0. 
\end{equation*}
\end{lem}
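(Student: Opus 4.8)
The plan is to construct the two lifts $\bb$ and $\bg$ by hand on test functions and then verify they are bounded for the quasinorm $\|\cdot\|_{k,p}$, so that they extend to the completions. The guiding principle is exactly the Douady phenomenon exploited in the preceding proposition: skewed saw-tooth profiles let us insert an "extra" derivative of controlled $L^p$-size while keeping the function itself (and, if we iterate the construction across several scales, lower-order antiderivatives) negligible in $L^p$. So for $\bb:L^p\to W^{k,p}$ I would first treat $g$ a smooth compactly supported bump; approximate $g$ in $L^p$ by step functions, and for each step of height $c$ over an interval of length $\ell$ replace it by a function $\phi$ whose $k$-th derivative is a finely oscillating saw-tooth of amplitude $\sim c$, arranged so that $\phi,\phi',\dots,\phi^{(k-1)}$ all have $L^p$-quasinorm tending to $0$ as the oscillation frequency grows, while $\phi^{(k)}$ stays close to the target step in $L^p$. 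Running this over all steps and passing to the limit in the completion produces $\bb g\in W^{k,p}$ with $\bd(\bb g)=\ba(\bb g)=0$ in the lower slots — more precisely $\bd^j(\bb g)=0$ for $j\ge 1$ in the appropriate sense — and $\ba$-type readout on the top derivative equal to $g$; renaming slots so that the nonzero derivative lands where $\ba$ reads gives $\ba\bb=\id_{L^p}$, $\bd\bb=0$. The estimate $j\epsilon_j\to0$ from the proposition is the prototype of the bound that makes the lower-order quasinorms vanish.

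For $\bg:W^{k-1,p}\to W^{k,p}$ I would do something dual: given a test function $h\in C^{k-1}_0$, I want an element of $W^{k,p}$ whose derivative is $h$ but whose own value is $0$ in $L^p$. Naively one integrates $h$, but for $0<p<1$ the primitive need not lie in $L^p$; the fix is again to build the primitive out of saw-tooth pieces so that on each sub-interval the "integral of $h$" is realized by a steeply-falling ramp that is invisible in $L^p$ on the function-slot while its derivative reproduces $h$ up to an $L^p$-small error. Concretely: partition the support of $h$ into small intervals, on each of which $h$ is nearly constant; on such an interval replace the true primitive by a saw-tooth oscillating rapidly about $0$ with the correct mean slope; the function then has small $L^p$-quasinorm, its derivative is within $L^p$-$\varepsilon$ of $h$, and — crucially — all the quasinorms $\|(\cdot)^{(j)}\|_p$ for $j=1,\dots,k$ are controlled by $\|h^{(j-1)}\|_p$ plus an error shrinking with the partition. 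This gives a bounded densely-defined map with $\bd\bg=\id_{W^{k-1,p}}$ and $\ba\bg=0$; boundedness lets it extend to all of $W^{k-1,p}$. Note that $\bb,\bg$ are then automatically injective because $\ba\bb=\id$ and $\bd\bg=\id$.

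The main obstacle, and where the real work sits, is the bookkeeping of the $k$ intermediate quasinorms simultaneously. For $k=1$ the proposition already does it, but for $k\ge 2$ one must choose the saw-tooth parameters (amplitude, frequency, and the number of nested scales) so that \emph{all} of $\|\phi\|_p,\|\phi'\|_p,\dots,\|\phi^{(k-1)}\|_p$ are small while $\|\phi^{(k)}\|_p$ is on target; each differentiation multiplies the amplitude by roughly (frequency)$/$(length), so the scales have to be tuned in a hierarchy, and one has to check that the $L^p$-quasinorm (which is only a quasinorm, so the triangle inequality carries a constant depending on $p$) of a sum of many disjoint saw-tooth pieces across a partition still goes to zero — here disjointness of supports makes $\|\sum\phi_i\|_p^p=\sum\|\phi_i\|_p^p$, which is what saves the estimate. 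I expect this multi-scale construction, rather than the extension-by-density step, to be the technical heart of the argument; the algebraic identities $\ba\bb=\id$, $\bd\bg=\id$, $\bd\bb=0$, $\ba\bg=0$ then fall out on test functions and persist under completion since all four maps are bounded.
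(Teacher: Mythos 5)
Your overall strategy (define the lifts on a dense class of test functions via multi-scale ``invisible'' corrections, verify the quasinorm bounds, extend by boundedness, and read off the four identities in the limit) is the paper's strategy too, and your treatment of $\bg$ is essentially workable: correcting the primitive on a partition by steep, $L^p$-invisible ramps is close in spirit to what the paper does (the paper's Sobolev-case shortcut is slicker: subtract one invisible bump $\langle g_j\rangle_\R\Phi_{\epsilon_j}$ so the integrand has mean zero, integrate to get a compactly supported $u_j\in C^k_0$, and set $f_j=u_j-\bb u_j$, which kills the function slot exactly). But your construction of $\bb$ has a genuine flaw: you build, for a step $g$, an element whose \emph{top} derivative slot reads $g$ while $\phi,\phi',\dots,\phi^{(k-1)}$ are $L^p$-small. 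That element satisfies $\ba(\cdot)=0$, not $\ba(\cdot)=g$; it is a lift onto the wrong coordinate. The subsequent step ``renaming slots so that the nonzero derivative lands where $\ba$ reads'' is not a legitimate operation: $\ba$ and $\bd$ are the fixed continuous extensions of $f\mapsto f$ and $f\mapsto f'$ from test functions, and no permutation of the coordinates of $W^{k,p}$ is available at this stage (the direct-sum picture is exactly what the lemma is needed to prove, and even granted it, permuting abstract summands does not change which coordinate $\ba$ evaluates). As written, your $\bb$ satisfies $\ba\bb=0$, so the identity $\ba\bb=\id_{L^p}$ fails.

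The repair is simpler than what you attempted and is what the paper does: construct a single nonnegative mollifier $\Phi_\epsilon$ with $\int_\R\Phi_\epsilon=1$, $\supp\Phi_\epsilon\subset[0,\epsilon]$ and $\|\Phi_\epsilon\|_{k,p}<\epsilon$ (an iterated convolution $H_{a_1}*\cdots*H_{a_{k+1}}$ of box kernels with a rapidly decreasing hierarchy of widths $a_j$ --- this is exactly the multi-scale tuning you anticipate, and disjointness/translation-invariance plus the $p$-triangle inequality give the needed $L^p$ bounds on all derivatives up to order $k$). Then for a step function $g$ put $g_j:=\Phi_{\epsilon_j}*g$: one has $g_j\to g$ in $L^p$, while $g_j'$ is a difference of translates of $\Phi_{\epsilon_j}$ and hence $\|g_j'\|_{k-1,p}\to0$, so the abstract limit $\bb g$ satisfies $\ba\bb g=g$ and $\bd\bb g=0$, and $\bb$ extends isometrically from step functions to $L^p$. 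With $\bb$ fixed this way, your $\bg$ (or the paper's $u_j-\bb u_j$ device) completes the lemma.
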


With this result at hand, the proof of the main theorem about the 
$W^{k,p}$-spaces  becomes a simple exercise.

\begin{proof}[Proof of Theorem~\ref{Peetre}]
As noted above, it will be enough to show that the isometry 
$\Aop:\,W^{k,p}\to L^p\oplus W^{k-1,p}$ given by $\Aop f:=(\ba f,\bd f)$ 
is surjective.
To this end, we pick $(g,h)\in L^p\oplus W^{k-1,p}$. 
Then $\bb g$ and $\bg h$ are both elements of $W^{k,p}$, and so is their sum
\[
f=\bb g+\bg h\in W^{k,p}.
\]
It now follows from Lemma \ref{lifts} 
that
\[
\Aop f=(\ba(\bb g + \bg h), \bd(\bb g + \bg h))=(\ba\bb g+\ba\bg h, 
\bd\bb g+\bd\bg h)=(g,h).
\]
As a consequence, $\Aop$ is surjective, and hence $\Aop$ induces an isometric
isomorphism
\[
W^{k,p}\cong L^p\oplus W^{k-1,p}.
\]
By iteration of the same argument, the claimed decomposition of $W^{k,p}$ 
follows.
\end{proof}
\begin{rem}
The lift $\bg$ does not appear in Peetre's work \cite{Peetre}.
Reading between the lines one can discern its role, but here we fill in the 
blanks and treat it explicitly.
\end{rem}

\section{Construction of lifts, and invisible mollifiers}
\subsection{A collection of smooth functions by iterated 
convolution}
\label{ss:convol}
For an integer $k\ge0$ and 
a real $0\le\alpha\le1$ let $C^{k,\alpha}$ denote the class of $k$ times 
continuously
differentiable functions, whose derivative of order $k$ is H\"older continuous
with exponent $\alpha$. 
Given two functions $f,g\in L^1(\R)$, their convolution $f*g\in L^1(\R)$ is as
usual given by
\[
(f*g)(x)=\int_\R f(x-t)g(t) \diff t,\quad x\in\R.
\]
For $a>0$, we let the function $H_a$ denote the normalized characteristic 
function
$H_a=a^{-1}1_{[0,a]}$. For a decreasing sequence of positive reals 
$a_1,a_2,a_3,\ldots$, 
consider the associated repeated convolutions (for $j\le k$)
\begin{equation}\label{eq:convol-def}
\Phi_{j,k}:=H_{a_j}*\cdots*H_{a_k}.
\end{equation}
The function $\Phi_{j,k}$ then has compact support $[0,a_j+\cdots a_k]$ and
belongs to the smoothness class $C^{k-j,1}$ which means that the derivative 
of order $k-j$ is Lipschitz continuous.
We will assume that the sequence $a_1,a_2,a_3,\ldots$ decreases to $0$ at 
least fast enough for $(a_j)_{j\geq 1}\in \ell^1$ to hold. 
Then we may form the limits
\[
\Phi_{j,\infty}:=\lim_{k\to+\infty} \Phi_{j,k},\qquad j=1,2,3,\ldots,
\]
and see that each such limit $\Phi_{j,\infty}$ is $C^\infty$-smooth with support
$[0,a_j+a_{j+1}+\dots]$. Moreover, we have the sup-norm controls
\begin{equation}
\|\Phi_{j,k}\|_\infty\le \frac{1}{a_j},\qquad 
\|\Phi_{j,\infty}\|_\infty\le \frac{1}{a_j}.
\label{eq-elest1.001}
\end{equation}
Next, since for $l<k$
\[
\Phi_{j,k}=\Phi_{j,l}*\Phi_{l+1,k} \quad\text{and}\quad
\Phi_{j,\infty}=\Phi_{j,l}*\Phi_{l+1,\infty},
\]
we may calculate the higher order derivatives by the  formula
\[
\Phi_{j,k}^{(n)}=\Phi_{j,j+n-1}^{(n)}*\Phi_{j+n,k} \quad\text{and}\quad
\Phi_{j,\infty}^{(n)}=\Phi_{j,j+n-1}^{(n)}*\Phi_{j+n,\infty},
\]
interpreted when needed in the sense of distribution theory. Here, we should
ask that $j+n\leq k+1$ in the first formula. By calculation,
\[
\Phi_{j,j+n-1}^{(n)}=\frac{1}{a_j\cdots a_{j+n-1}}(\delta_{a_j}-\delta_0)*\cdots*
(\delta_{a_{j+n-1}}-\delta_0),
\]
which when expanded out is the sum of delta masses at $2^n$ 
(generically distinct) points, each with mass $(a_j\cdots a_{j+n-1})^{-1}$.   
By the convolution norm inequality $\|f*g\|_\infty\le\|f\|_1\|g\|_\infty$, where
the $L^1$ norm may be extended to the finite Borel measures, we have that
\[
\big\|\Phi_{j,k}^{(n)}\big\|_\infty=\big\|\Phi_{j,j+n-1}^{(n)}\big\|_1
\big\|\Phi_{j+n,k}\big\|_\infty 
\le \frac{2^n}{a_j\cdots a_{j+n}},
\]
where we used the estimate \eqref{eq-elest1.001}. The analogous estimate holds
for $k=\infty$ as well:
\begin{equation}
\big\|\Phi_{j,\infty}^{(n)}\big\|_\infty\le\frac{2^n}{a_j\cdots a_{j+n}}.
\label{eq-elest1.002}
\end{equation}
We need to estimate the $L^p$-norm of the function $\Phi_{j,\infty}^{(n)}$ as 
well. The standard norm estimate for convolutions is 
$\|f*g\|_q\le\|f\|_1\|g\|_q$ which holds provided that $1\le q\le+\infty$.
For our small exponents $0<p<1$ this is no longer true. 
However, there is a substitute, provided $f$ is a finite sum of point masses:
\[
\|f*g\|_p^p\le\|f\|_{\ell^p}^p\|g\|_p^p, \quad\text{where}\quad \|f\|_{\ell^p}^p
=\sum_j |b_j|^p\quad\text{if}\quad f=\sum_j b_j\delta_{x_j},
\] 
for some finite collection of reals $x_j$. This follows immediately from the
$p$-triangle inequality and the translation invariance of the $L^p$-norm.
In our present context we see that
\begin{equation}
\big\|\Phi_{j,k}^{(n)}\big\|_p^p=\big\|\Phi_{j,j+n-1}^{(n)}*\Phi_{j+n,k}
\big\|_p^p\le\big\|\Phi_{j,j+n-1}^{(n)}\big\|_{\ell^p}^p\big\|\Phi_{j+n,k}\big\|_p^p
\le \frac{2^n}{(a_j\cdots a_{j+n})^p}\sum_{l=j+n}^{k}a_l,
\label{eq-elest1.003}
\end{equation}
where $n+j\leq k+1$. Correspondingly for $k=+\infty$ we have that
\begin{equation}
\big\|\Phi_{j,\infty}^{(n)}\big\|_p^p=\big\|\Phi_{j,j+n-1}^{(n)}*\Phi_{j+n,\infty}
\big\|_p^p\le\big\|\Phi_{j,j+n-1}^{(n)}\big\|_{\ell^p}^p
\big\|\Phi_{j+n,\infty}\big\|_p^p
\le \frac{2^n}{(a_j\cdots a_{j+n})^p}\sum_{l=j+n}^{+\infty}a_l.
\label{eq-elest1.004}
\end{equation}

\subsection{Existence of invisible mollifiers in $W^{k,p}$}
\label{ss:construct}
We now employ the repeated convolution procedure of Section~\ref{ss:convol},
to exhibit mollifiers with $L^p$-vanishing properties.
\begin{lem}[invisibility lemma]
\label{lem:mollifier-finite} 
Let $k\geq 0$ be an integer. Then for any given $\epsilon>0$, there exists a 
non-negative function $\Phi \in C^{k,1}(\R)$ such that
\[
\int_\R \Phi\; \diff x=1,\quad \operatorname{supp}(\Phi)\subset 
[0,\epsilon], \quad \lVert \Phi\rVert_{k,p} < \epsilon.
\]
\end{lem}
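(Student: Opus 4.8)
The plan is to take $\Phi$ to be one of the iterated box-convolutions from Subsection~\ref{ss:convol}, namely $\Phi:=\Phi_{1,N}$ with $N$ large enough that $\Phi\in C^{k,1}(\R)$ (the count in Subsection~\ref{ss:convol} gives $N=k+1$), for a rapidly decreasing finite sequence $a_1>a_2>\cdots>a_N>0$ still to be fixed. Since $\Phi$ is a convolution of the normalized indicators $H_{a_j}=a_j^{-1}1_{[0,a_j]}$, several requirements of the lemma hold for free and independently of the choice of the $a_j$: $\Phi\ge0$, $\int_\R\Phi\,\diff x=\prod_j\int_\R H_{a_j}\,\diff x=1$, $\Phi\in C^{k,1}(\R)$, and $\supp\Phi\subseteq[0,a_1+\cdots+a_N]$. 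So the whole task reduces to choosing $a_1,\ldots,a_N$ so that $a_1+\cdots+a_N<\epsilon$ (this gives $\supp\Phi\subset[0,\epsilon]$) and $\|\Phi\|_{k,p}<\epsilon$.

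For the support, I would impose geometric decay: demand $a_1<\epsilon/2$ and $a_{n+1}<\tfrac12 a_n$ for $1\le n\le N-1$. Then $a_1+\cdots+a_N<2a_1<\epsilon$ as wanted, and, as a by-product, every tail obeys $\sum_{l\ge m}a_l\le 2a_m$. For the quasinorm, recall $\|\Phi\|_{k,p}^p=\sum_{n=0}^{k}\|\Phi^{(n)}\|_p^p$ and feed the tail bound into the convolution estimate \eqref{eq-elest1.003} (the $\ell^p$-substitute for Young's inequality, applicable because $\Phi^{(n)}$ is a finite sum of point masses convolved with an $L^p$ function):
\[
\|\Phi^{(n)}\|_p^p\le\frac{2^n}{(a_1\cdots a_{n+1})^p}\sum_{l=n+1}^{N}a_l\le\frac{2^{n+1}\,a_{n+1}^{\,1-p}}{(a_1\cdots a_n)^p},\qquad 0\le n\le k,
\]
with the empty product for $n=0$ read as $1$.

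The point that makes everything work — and the reason I do not expect a genuine obstacle — is that the $n$-th estimate involves only $a_1,\ldots,a_{n+1}$ and depends on the newest of these, $a_{n+1}$, solely through the factor $a_{n+1}^{\,1-p}$, whose exponent is positive because $0<p<1$. This lets me fix the $a_j$ one at a time: choose $a_1<\epsilon/2$ so small that $2a_1^{\,1-p}<\epsilon^p/(k+1)$; then, for $n=1,\ldots,k$ in turn, with $a_1,\ldots,a_n$ already frozen, choose $a_{n+1}<\tfrac12 a_n$ so small that $2^{n+1}a_{n+1}^{\,1-p}/(a_1\cdots a_n)^p<\epsilon^p/(k+1)$; and finally, if $N>k+1$, choose the remaining $a_j$ with the geometric decay only. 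Summing the $k+1$ bounds gives $\|\Phi\|_{k,p}^p<\epsilon^p$, hence $\|\Phi\|_{k,p}<\epsilon$. The only thing to check carefully is the bookkeeping: shrinking $a_{n+1}$ at a later stage appears in the estimates for lower-order derivatives only inside their tail sums $\sum_{l\ge m}a_l$, never in their denominators $(a_1\cdots a_m)^p$, so each later choice can only \emph{decrease} an already-secured bound, and the choices never interfere.
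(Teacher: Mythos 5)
Your proposal is correct and follows essentially the same route as the paper: take $\Phi=\Phi_{1,k+1}$ from the iterated box-convolution construction, impose the geometric decay $a_{l+1}\le\tfrac12 a_l$ to control the support and the tail sums, and then choose each $a_{l}$ successively so small (exploiting the positive exponent $1-p$) that every term $\lVert\Phi^{(n)}\rVert_p^p$ is below $\epsilon^p/(k+1)$. The only cosmetic difference is that the paper writes the successive choices as explicit formulas rather than an inductive "choose small enough" step.
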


\begin{proof}
For positive decreasing numbers $\{a_j\}_{j=1}^{k+1}$ (to be determined), 
let $\Phi=\Phi_{1,k+1}$ be given by \eqref{eq:convol-def}.
Then clearly $\Phi\in C^{k,1}(\R)$ with $\Phi\ge0$ and 
$\int_\R \Phi\,\diff x=1$. 
Moreover, the support of $\Phi$ equals $[0,a_1+\cdots+a_{k+1}]$.
By the estimate \eqref{eq-elest1.003}, it follows that
\begin{equation}\label{eq:norm-conv-finite} 
\lVert \Phi^{(n)}\rVert_p^p\leq \frac{2^n}{(a_1\cdots a_{n+1})^p}
\sum_{l=n+1}^{k+1}a_l.
\end{equation}
We need to show that the finite sequence $\{a_l\}_{l=1}^{k+1}$ may be chosen 
such that the sum of the right-hand side in \eqref{eq:norm-conv-finite} 
over $0\leq n\leq k$ is bounded by $\epsilon^p$, while at the same time
$\sum_{l=1}^{k+1} a_l\leq\epsilon$.
As a first step, \emph{we assume that $a_{j+1}\leq \frac12{a_{j}}$ for integers
$n\ge0$}, and observe that it then follows that $\sum_{l=j}^{k+1}a_l\leq 2a_j$. 
Consequently, we get that $\supp\Phi\subset [0,2a_0]$ and
\[
\lVert \Phi^{(n)}\rVert_p^p\leq 2^{n+1}\frac{a_{n+1}}{(a_1\cdots a_{n+1})^p},\quad 
n=0,\ldots,k.
\]
We put
\[
a_1:=\min\left\{\left(\frac{\epsilon^p}{2(k+1)}\right)^{1/(1-p)}, 
\frac{\epsilon}{2}\right\}
\] 
and successively declare that 
\[
a_l:=\min\left\{
\Bigg(\frac{\epsilon^p (a_1\cdots a_{l-1})^p}
{2^{l}(k+1)}\Bigg)^{1/(1-p)},\frac{a_{l-1}}{2}\right\}, \quad l=2,\ldots k+1.
\]
It then follows that
\[
\lVert \Phi^{(n)}\rVert_p^p\leq2^{n+1}\frac{a_{n+1}}{a_1^p\cdots a_{n+1}^p}
=a_{n+1}^{1-p}\frac{2^{n+1}}{(a_0\cdots a_{n})^p}\leq\frac{\epsilon^p}{k+1},
\quad n=0,\ldots,k.
\]
whence $\lVert \Phi\rVert_{p,k}\leq \epsilon$ and since also
$\supp\Phi\subset [0,2a_0]\subset [0,\epsilon]$, the constructed function 
$\Phi$ meets all the specifications.

\end{proof}

\subsection{The definition of the lift $\bb$ for $0<p<1$.}
\label{ss:lift-b}
The lift $\bb$ 
maps boundedly $L^p\to W^{k,p}$, and we need to explain how it gets to be 
defined. Let $\mathscr{F}$ denote the collection of \emph{step functions},
which we take to be the finite linear combination of characteristic
functions of bounded intervals, and when also equip it with the quasinorm 
of $L^p$, we denote it by $\mathscr{F}_p:=(\mathscr{F},\|\cdot\|_p)$. We 
note that $\mathscr{F}_p$ is quasinorm dense in $L^p$. We first define 
$\bb g$ for $g\in\mathscr{F}_p$. For $g\in\mathscr{F}_p$, we will write down a 
$W^{k,p}$-Cauchy sequence $\{g_{j}\}_j$ of test functions $g_{j}\in C^k_{0}(\R)$, 
and declare $\bb g\in W^{k,p}$ to be the abstract limit of the  
Cauchy sequence $g_{j}$ as $j\to+\infty$. 

We will require the following properties of the test functions $g_j$:
\begin{equation}
\lim_{j\to+\infty}\lVert g-g_{j}\rVert_p=0\quad\text{and}
\quad \lim_{j\to+\infty}\lVert g_{j}'\rVert_{k-1,p}=0.
\label{eq-defbb1.1}
\end{equation}
If \eqref{eq-defbb1.1} can be achieved, then 
$\bb:\,\mathscr{F}_p\to W^{k,p}$ becomes an isometry by the following 
calculation:
\[\label{eq:beta-eqns}
\|\bb g\|_{k,p}^p=\lim_{j\to+\infty}\|g_{j}\|_{k,p}^p
=\lim_{j\to+\infty}
\big(\|g_{j}\|_{p}^p+\|g_{j}'\|_{k-1,p}^p\big)=\|g\|_p^p,\qquad g\in\mathscr{F}.
\]

\emph{These properties uniquely determine $\bb g$ for $g\in\mathscr{F}$}. 
Indeed, if $\tilde g_{j}$ were another 
Cauchy sequence satisfying \eqref{eq:beta-eqns}, then $\{\tilde g_{j}\}_j$ and 
$\{g_{j}\}_j$ are equivalent as Cauchy sequences, in light of 
\[
\lVert \tilde g_{j}-g_{j}\rVert_{k,p}^p= \lVert \tilde g_{j}-g_j\rVert_p^p
+\lVert \tilde g_{j}'-g_{j}'\rVert_{k-1,p}^p\le\lVert \tilde g_{j}-g_j\rVert_p^p
+\| \tilde g_{j}'\|_{k-1,p}^p+\|g_{j}'\|_{k-1,p}^p.
\]
In particular, $\bb:\mathscr{F}_p\to W^{k,p}$ is then a well-defined bounded 
operator, and since $\mathscr{F}_p$ is dense in $L^p$ it extends uniquely 
to a bounded operator $\bb:L^p\to W^{k,p}$ which is actually an isometry. 

In view of the above, it will be enough to define $\bb g$ when $g$ is the 
characteristic function of an interval $g=1_{[a,b]}$ and to check 
\eqref{eq-defbb1.1} for it, since general step functions in $\mathscr{F}$
are obtained using finite linear combinations. 

\medskip

Let $\{\epsilon_j\}_{j=0}^{+\infty}$ be a sequence of numbers tending to zero 
with $0<\epsilon_j<\frac12(b-a)$. By 
Lemma~\ref{lem:mollifier-finite} applied to $W^{k-1,p}$, there exists 
non-negative functions $\Phi_{\epsilon_j}\in C^{k-1}$, such that
\[
\int_\R \Phi_{\epsilon_j}\diff x=1,\quad 
\supp \Phi_{\epsilon_j}\subset [0,\epsilon_j],
\quad \nl \Phi_{\epsilon_j}\nr_{k-1,p}<\epsilon_j.
\]
We define $g_j$ by convolution: $g_j:=\Phi_{\epsilon_j}*1_{[a,b]}$. 
It is then clear that $g_j-g$ has support on 
$[a,a+\epsilon_j]\cup [b,b+\epsilon_j]$, and there, it is bounded by $1$ in 
modulus. As a consequence,
\[
\|g_j-g\|_p^p\leq 2\epsilon_j,
\]
so $g_j\to g$ in $L^p$. Next, we consider the derivative 
$g_j'$, which we may express as $g_j'=(\tau_a-\tau_b)\Phi_{\epsilon_j}$, where
we recall that $\tau$ with subscript is a translation operator. 
It is clear that
\[
\| g_j'\|_{k-1,p}^p\leq 2\| \Phi_{\epsilon_j}\|^p_{k-1,p}\leq 2\epsilon_j.
\]
Consequently, $\|g_j'\|_{k-1,p}^p$ also tends to zero, as needed. This 
establishes \eqref{eq-defbb1.1}.
\medskip

\begin{figure*}[h]
    \centering
    \includegraphics*[width=0.6\textwidth]{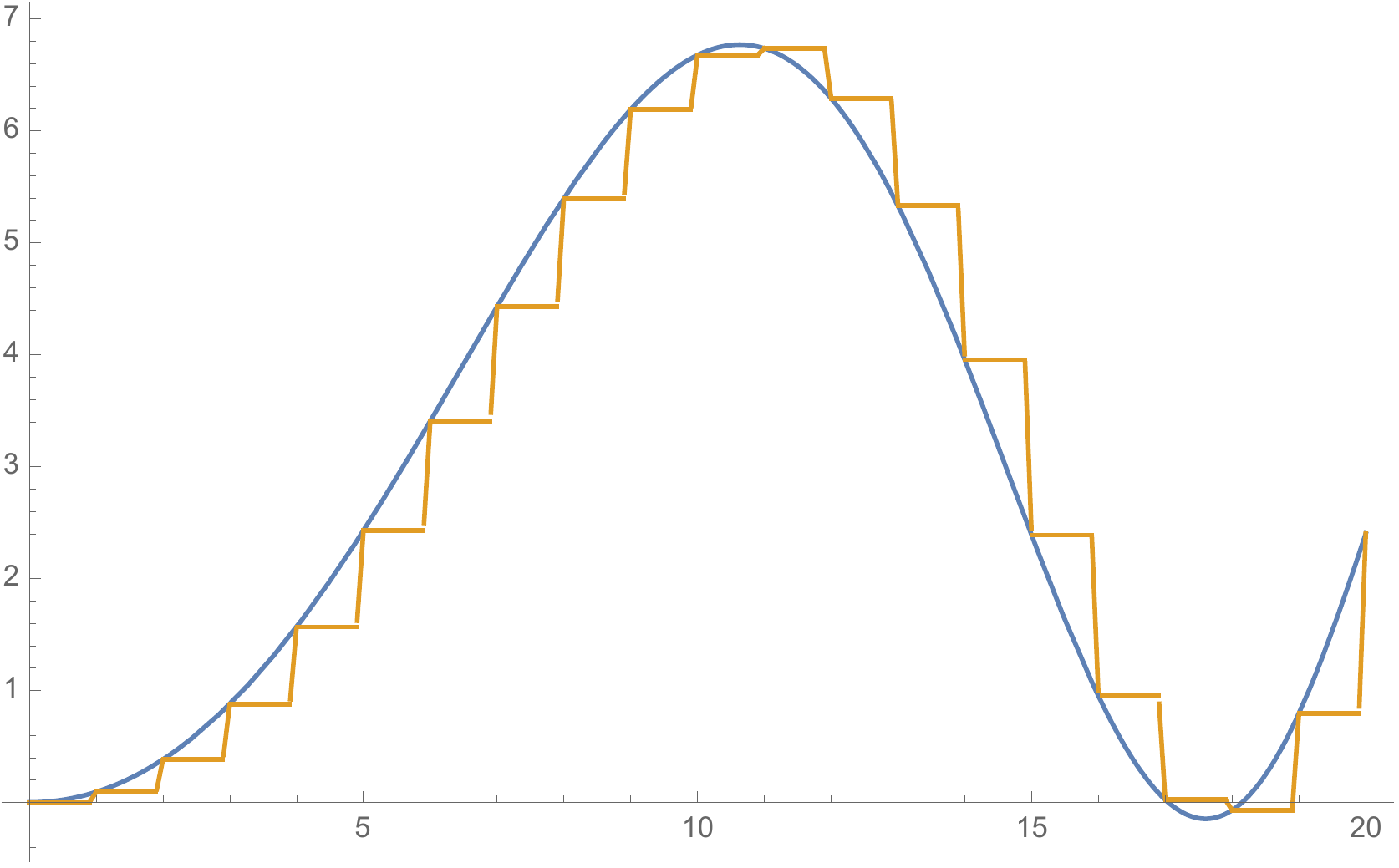}
    \caption{The construction of $\bb$ for $W^{1,p}$.}
    \label{fig:beta}
\end{figure*}

\begin{proof}[Proof of Lemma, part 1]
We show that $\ba\bb=\id_{L^p}$ and $\bd\bb=0$. 
Since $\bb g\in W^{k,p}$ is the abstract limit of the Cauchy sequence 
$g_j$ with \eqref{eq-defbb1.1}, and by definition $\ba g_j=g_j$ and 
$\bd g_j=g_j'$, it follows from \eqref{eq-defbb1.1} that $\ba\bb g=g$ 
and $\bd\bb g=0$ for every $g\in L^p$.
The assertion follows.
\end{proof} 

\subsection{The lift $\bg$.}\label{ss:lift-g}
To construct $\bg$, we let $g\in W^{k-1,p}$
be an arbitrary element, which is by definition the abstract limit of 
some Cauchy sequence $\{g_j\}_j$, where $g_j\in C^{k-1}_0(\R)$. 
For any given $\epsilon>0$, Lemma~\ref{lem:mollifier-finite} provides a 
function $\Phi_\epsilon\in C^{k-1}_0(\R)$
with $\Phi_\epsilon\ge0$, $\langle \Phi_\epsilon\rangle_\R
:=\int_\R \Phi_\epsilon(t)\diff t=1$, supported in $[0,\epsilon]$, 
while at the same time, $\nl\Phi_\epsilon\nr_{k-1,p}<\epsilon$.  
We use the functions $\Phi_{\epsilon_j}$ to modify each $g_j(x)$ to 
have vanishing zeroth moment, by defining
\[
\tilde{g}_j(x):=g_j(x)-\langle g_j\rangle_\R \Phi_{\epsilon_j}(x),\qquad
\langle g_j\rangle_\R:=\int_\R g_j(t)\diff t,
\]
where the $\epsilon_j$ are chosen to tend to zero so quickly that
\[
\lim_{j\to+\infty} \| g_j-\tilde{g}_j\|_{k-1,p}=
\lim_{j\to+\infty}\|\Phi_{\epsilon_j}\|_{k-1,p}\lvert\langle g_j\rangle_\R\rvert=0.
\]
Next, we define the functions $u_j$ as primitives:
\[
u_j(x)=\int_{-\infty}^x\tilde{g}_j(t)\diff t,\quad x\in\R.
\]
Then as $\tilde g_j$ has integral $0$, we see that $u_j\in C^k_0(\R)$.
We put $f_j:= u_j-\bb u_j\in W^{k,p}$, and observe that from the known 
properties of $\bb$, it follows that
\begin{equation}
\ba f_j=\ba u_j-\ba\bb u_j=u_j-u_j=0\quad\text{and}\quad
\bd f_j=\bd u_j-\bd\bb u_j=\bd u_j=u_j'=\tilde g_j.
\label{eq-propabgd}
\end{equation}
Then, from the isometry of $\Aop:\,W^{k,p}\to L^p\oplus W^{k-1,p}$
(see \eqref{eq-isom1.1}), we have that 
\begin{equation}
\|f_j\|_{k,p}^p=\|\ba f_j\|_p^p+\|\bd f_j\|_{k-1,p}^p
=\|\tilde g_j\|^p_{k-1,p}\le\| g_j\|^p_{k-1,p}+
\|\Phi_{\epsilon_j}\|_{k-1,p}^p|\langle g_j\rangle_\R|^p,
\label{eq-contractive1.23}
\end{equation}
where in the last step, we applied the $p$-triangle inequality. 
A similar verification shows that $\{f_j\}_j$ is a Cauchy sequence, so that
it has a limit $\bg g:=\lim_{j\to+\infty}f_j$ in $W^{k,p}$. Moreover, in view of 
\eqref{eq-propabgd}, it follows that
\begin{equation}
\ba\bg g=\lim_{j\to+\infty}\ba f_j=0\quad\text{and}\quad
\bd\bg g= \lim_{j\to+\infty} \bd f_j=\lim_{j\to+\infty}\tilde g_j=g,
\label{eq-propabgd2.01}
\end{equation}
in $L^p$ and $W^{k-1,p}$, respectively. In the construction of the sequence of
functions $f_j$, there is some arbitrariness e.g. in the choice of the sequence
of the $\epsilon_j$ (they were just asked to tend to $0$ sufficiently 
quickly).  
To investigate whether this matters, we suppose another Cauchy sequence 
$\{F_j\}_j$ in $W^{k,p}$ is given, with properties that mimic 
\eqref{eq-propabgd}: that $\ba F_j=0$ in $L^p$, and that for some 
Cauchy sequence $\{G_j\}_j$ in $W^{k-1,p}$ converging to $g\in W^{k-1,p}$, 
we know that $\bd F_j=G_j$, then
\begin{multline*}
\lVert F_j-f_j\rVert_{k,p}^p=\lVert \Aop(F_j-f_j)
\rVert_{L^p\oplus W^{k-1,p}}^p=\lVert \ba(F_j-f_j)\rVert_{p}^p+
\lVert \bd(F_j-f_j)\rVert_{k-1,p}^p
\\
=\lVert G_j-\tilde g_j\rVert_{k-1,p}^p\le\lVert G_j-g_j\rVert_{k-1,p}^p+
\|\Phi_{\epsilon_j}\|^p_{k-1,p}\lvert\langle g_j\rangle_\R\rvert^p 
\to0, 
\qquad \text{as}\quad j\to+\infty,
\end{multline*}
by the isometric properties of $\Aop$. If we let $F$ denote the abstract limit
of the Cauchy sequence $F_j$ in $W^{k,p}$, we conclude that $F=f$ in $W^{k,p}$. 
In conclusion, it did not matter whether we used the prescribed Cauchy 
sequence or a competitor, and hence $\bg g\in W^{k,p}$ is well-defined for 
$g\in W^{k-1,p}$.
Finally, we observe from \eqref{eq-contractive1.23} that
\[
\|\bg g\|_{k,p}\le \|g\|_{k-1,p},\qquad g\in W^{k-1,p},
\]
which makes $\bg:W^{k-1,p}\to W^{k,p}$ a linear contraction. 

\begin{figure*}[h]
    \centering
    \includegraphics*[width=0.6\textwidth]{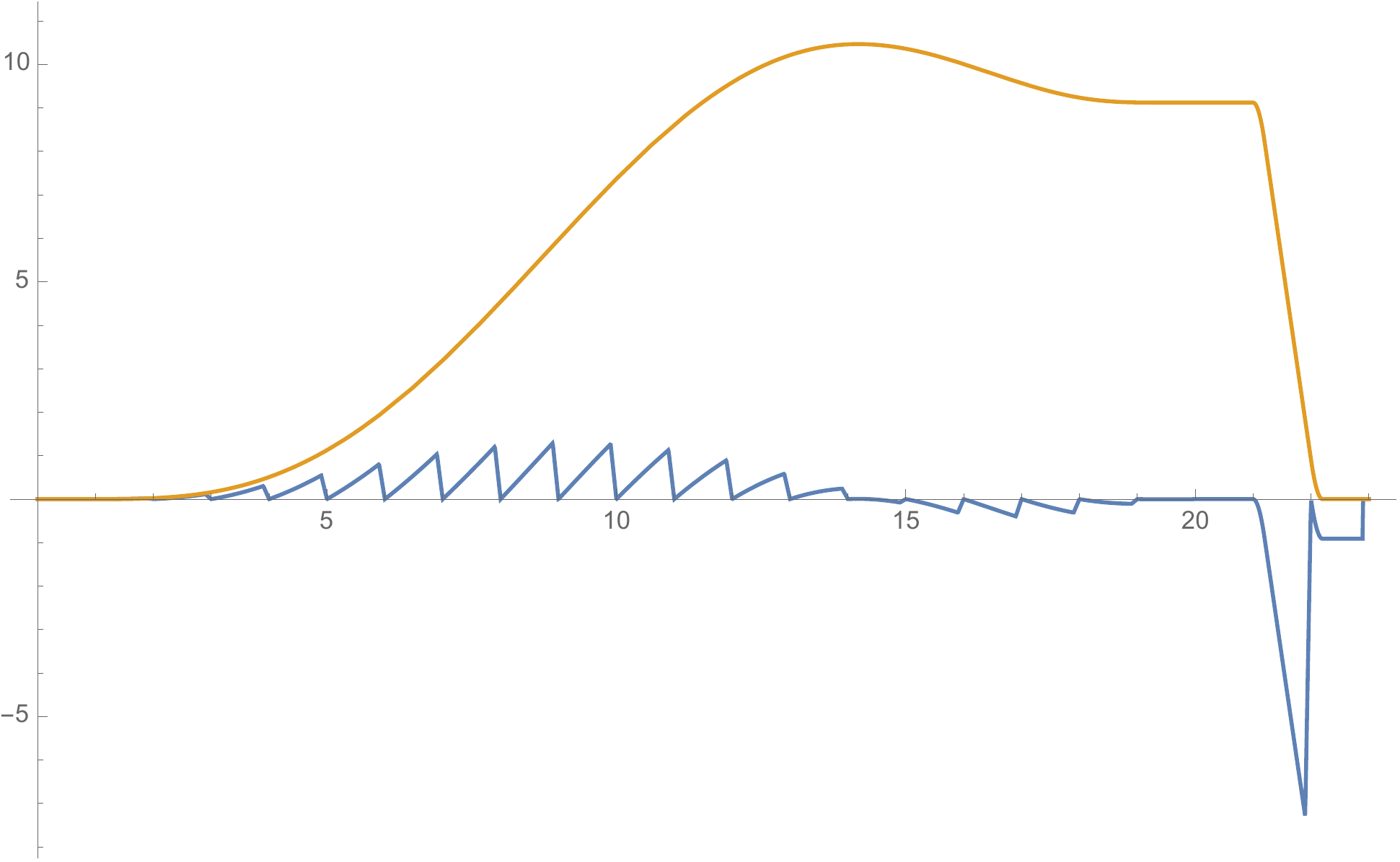}
    \caption{The functions $f_j$ and $u_j$, for $W^{1,p}$}
    \label{fig:beta}
\end{figure*}

\begin{proof}[Proof of Lemma, part 2]
We show that the remaining properties, those involving $\bg$: 
$\ba\bg=0$ and $\bd\bg=\id_{W^{k-1,p}}$. 
We read off from \eqref{eq-propabgd} that $\ba\bg g=0$ and $\bd\bg g=g$ 
for $g\in W^{k-1,p}$, which does it.
\end{proof}

\begin{rem}
A classical theorem by Day \cite{Day} states that the $L^p$-spaces for 
$0<p<1$ have trivial dual. It follows from Douady-Peetre's isomorphism 
$W^{k,p}\cong L^p\oplus\cdots\oplus L^p$ that the same is true for the 
Sobolev spaces $W^{k,p}$. We note here that any space that could be 
realised as a space of distributions would necessarily admit non-trivial 
bounded functionals (the test functions, for instance). 
Hence it follows that the elements of $W^{k,p}$ cannot even be interpreted
as distributions.
\end{rem}

\section{The smooth regime}
\label{sec-smooth}

\subsection{Classes of test functions} Although we mainly focus on  
the classes $\S_{p,\theta,\M}^\circledast$, we also mention 
the Hermite class $\S^{\mathrm{Her}}$ of weighted polynomials
\[
\S^{\mathrm{Her}}=\left\{f:\,\,f(x)=q(x)\e^{-x^2},\,\,
\text{ where }\, q\in \operatorname{Pol}(\R)\right\},
\]
where $\operatorname{Pol}(\R)$ denotes the linear space 
of all polynomials of a real variable. We show that $\S^{\mathrm{Her}}$ 
consists of $(p,0)$-tame functions. To get a class which fits into the 
associated $L^p$-Carleman class, we also intersect the Hermite class 
with the space $\{f\in C^{\infty}(\R):\lVert f\rVert_{p,\M}<+\infty\}$  to obtain
the $\S^{\mathrm{Her}}_{p,\M}$.

\begin{lem}\label{Lemma-Hermite}
The Hermite class $\S^\mathrm{Her}$ consists of $(p,0)$-tame functions.
\end{lem}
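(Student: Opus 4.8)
The plan is to estimate the $L^\infty$-norms of the derivatives of a single element $f(x)=q(x)\e^{-x^2}$ of the Hermite class and show that they grow slowly enough that \eqref{eq-tameness} holds with $\theta=0$. Since $(p,\theta)$-tameness of the class $\S^{\mathrm{Her}}$ is just the requirement that every individual $f\in\S^{\mathrm{Her}}$ be $(p,0)$-tame, it suffices to fix one polynomial $q$ of degree $d$ and study $f^{(n)}$ as $n\to+\infty$. First I would note that, by Leibniz's rule, $f^{(n)}(x)=\big(q(x)\e^{-x^2}\big)^{(n)}$ is again of the form $r_n(x)\e^{-x^2}$ where $r_n$ is a polynomial. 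In fact, the cleanest route is to recall the Hermite polynomial identity $\big(\e^{-x^2}\big)^{(k)}=(-1)^k H_k(x)\e^{-x^2}$, where $H_k$ is the (physicists') Hermite polynomial of degree $k$; then Leibniz gives
\[
f^{(n)}(x)=\sum_{k=0}^{n}\binom{n}{k}q^{(n-k)}(x)\,(-1)^kH_k(x)\,\e^{-x^2},
\]
and since $q$ has degree $d$, only the terms with $n-k\le d$ survive, i.e. $k\ge n-d$.

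The key step is then a sup-norm bound on $x\mapsto H_k(x)\e^{-x^2}$ together with the combinatorial factors. The classical bound here is $\|H_k(x)\e^{-x^2/2}\|_\infty\le C\,2^{k/2}\sqrt{k!}$ (for instance from the generating function or from Cramér's inequality $|H_k(x)|\e^{-x^2/2}\le C\,2^{k/2}\sqrt{k!}$), so a fortiori $\|H_k(x)\e^{-x^2}\|_\infty\le C\,2^{k/2}\sqrt{k!}$. Plugging this in and using $\binom{n}{k}\le 2^n$ and that $q^{(n-k)}$ for $n-d\le k\le n$ is bounded by a constant depending only on $q$ (there are at most $d+1$ such terms), one gets a crude estimate of the shape
\[
\|f^{(n)}\|_\infty\le C_q\,(d+1)\,2^n\cdot 2^{n/2}\sqrt{n!}\le C_q'\,2^{2n}\sqrt{n!}.
\]
Taking logarithms, $\log\|f^{(n)}\|_\infty\le \tfrac12\log n!+\mathrm{O}(n)=\tfrac12 n\log n+\mathrm{O}(n)$ by Stirling. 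Since $0<p<1$, we have $(1-p)^n\to0$, and therefore $(1-p)^n\log\|f^{(n)}\|_\infty\le (1-p)^n\big(\tfrac12 n\log n+\mathrm{O}(n)\big)\to 0$, which yields \eqref{eq-tameness} with $\theta=0$.

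I expect the only genuine subtlety to be citing (or quickly re-deriving) a clean sup-norm estimate for $H_k\e^{-x^2}$ that is uniform in $x$; the precise constant and even the power of $2$ are irrelevant, since any bound of the form $\log\|f^{(n)}\|_\infty=\mathrm{O}(n\log n)$ is killed by the factor $(1-p)^n$. An alternative, entirely elementary, route that avoids naming Hermite polynomials is to argue directly: writing $f^{(n)}=r_n(x)\e^{-x^2}$ and tracking that $\deg r_n\le n+d$ with coefficients controlled by the recursion $r_{n+1}=r_n'-2xr_n$, one shows by induction that the coefficients of $r_n$ are bounded by something like $C^n n!$, and then $\|r_n\e^{-x^2}\|_\infty\le (\deg r_n+1)\max|\text{coeff}|\cdot\max_x(|x|^{n+d}\e^{-x^2})$, where $\max_x|x|^m\e^{-x^2}=(m/2)^{m/2}\e^{-m/2}\le m^{m/2}$; multiplying these gives again a bound whose logarithm is $\mathrm{O}(n\log n)$. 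Either way the conclusion is immediate once the $\mathrm{O}(n\log n)$ growth of the log-sup-norm is in hand, because $(1-p)^n$ decays geometrically.
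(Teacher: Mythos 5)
Your overall strategy is sound, and your ``alternative, entirely elementary'' route is in fact essentially the paper's own proof: the paper introduces the operator ${\mathbf L}q=q'-2xq$, so that $f^{(n)}=\e^{-x^2}{\mathbf L}^nq$ (your $r_n$, with your recursion $r_{n+1}=r_n'-2xr_n$), bounds the coefficients of ${\mathbf L}^nq$ by an iterated Pochhammer-type estimate $(d+2)_n m_q$ (your $C^n n!$), uses $\sup_{x\in\R}|x|^{j}\e^{-x^2}=(j/2)^{j/2}\e^{-j/2}$, and concludes exactly as you do: the log of the sup-norm grows like $\mathrm{O}\big((n+d)\log(n+d)\big)$, which is annihilated by the geometric factor $(1-p)^n$, giving \eqref{eq-tameness} with $\theta=0$. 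Your primary route, via $(\e^{-x^2})^{(k)}=(-1)^kH_k(x)\e^{-x^2}$ and Cram\'er's inequality, is a genuinely different and shorter path if one is willing to quote the classical Hermite bound, but as written it has a slip: you treat $q^{(n-k)}$ for $n-d\le k\le n$ as ``bounded by a constant depending only on $q$'', yet these derivatives are polynomials of degree up to $d$ (including $q$ itself when $k=n$) and are unbounded on $\R$ unless $d=0$, so the factorization $\|q^{(n-k)}\|_\infty\cdot\|H_k\e^{-x^2}\|_\infty$ does not make sense. The repair is immediate: split the Gaussian, using Cram\'er's inequality in its natural form $|H_k(x)|\e^{-x^2/2}\le C\,2^{k/2}\sqrt{k!}$ and absorbing the polynomial factor into the other half via $\sup_{x\in\R}|q^{(j)}(x)|\e^{-x^2/2}\le C_q$ uniformly over the finitely many $0\le j\le d$; with that change your bound $\|f^{(n)}\|_\infty\le C_q'\,2^{2n}\sqrt{n!}$ and the ensuing limit argument are correct. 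In comparing the two approaches: the Hermite/Cram\'er route buys brevity at the cost of citing an external inequality, while the paper's recursion route (your second sketch) is self-contained and needs nothing beyond elementary calculus; both hinge on the same final observation that any $\mathrm{O}(n\log n)$ bound on $\log\|f^{(n)}\|_\infty$ suffices.
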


\begin{proof} 
Let $f\in \S^\mathrm{Her}$. Then $f(x)=q(x)\e^{-x^2}$ for some 
polynomial $q\in\operatorname{Pol}(\R)$. Let $d$ be the degree of $q$, 
and assume that all coefficients of the polynomial $q(x)$ are bounded by 
some number $m=m_q$. Let ${\mathbf L}$ be the operator on 
$\operatorname{Pol}(\R)$ defined by the relation
\[
{\mathbf L}q(x)=\e^{x^2}\frac{\diff}{\diff x}
\left(q(x)\e^{-x^2}\right)=q'(x)-2x q(x),\quad q \in \operatorname{Pol}(\R).
\]
It follows that the Taylor coefficients 
$\widehat{{\mathbf L}q}(j)$ of ${\mathbf L}q$ can be 
estimated rather crudely in terms of $d$ and $m_q$:
\begin{equation}
\label{est-L-coeff}
\lvert\widehat{{\mathbf L}q}
(j)\rvert
\leq (d+2)m_q,\qquad j=0,1,\ldots,d+1,
\end{equation}
and the coefficients vanish for $j>d+1$, so that the degree of ${\mathbf L}q$
is at most $d+1$.
By repeating the same argument, with ${\mathbf L}q$ in place of $q$, 
we obtain 
\[
\lvert \widehat{{\mathbf L}^2q}(j)\rvert 
\leq (d+3)(d+2)m_q,
\]
and, by iteration of \eqref{est-L-coeff}, it follows more generally that
for $j=0,1,2,\ldots$,
\begin{equation}
\label{est-Ln-coeff}
\lvert\widehat{{\mathbf L}^nq}(j)\rvert
\leq (d+2+n-1)(d+n-2)\cdots(d+2)m_q=(d+2)_{n} m_q,\quad n=1,2,\ldots,
\end{equation}
where we use the the standard Pochhammer notation $(x)_{k}=
x(x+1)\cdots (x+k-1)$ for $x\in\R$.

We proceed to estimate the $L^\infty$-norm of $f^{(n)}$. By the way 
${\mathbf L}$ was defined, we may estimate
\[
\|f^{(n)}\|_\infty = \sup_{x\in\R}|\e^{-x^2}{\mathbf L}^nq(x)|
=\sup_{x\in\R}\bigg\lvert 
\e^{-x^2}\sum_{j=0}^{d+n}\widehat{{\mathbf L}^nq}(j) 
x^{j}\bigg\rvert\leq \sum_{j=0}^{d+n}
\lvert\widehat{{\mathbf L}^nq}(j)\rvert 
\sup_{x\in\R}\,\lvert x\rvert^{j}\e^{-x^2}.
\]
By trivial calculus, the supremum on the right-hand side is attained at 
$|x|=\sqrt{j/2}$, and this we may implement in the above estimate 
while we recall the coefficient estimate \eqref{est-Ln-coeff}:
\begin{equation}
\label{est-fn}
\|f^{(n)}\|_\infty \leq m_q(d+2)_{n}\sum_{j=0}^{d+n}
\left(\frac{j}{2}\right)^{j/2}\e^{-j/2}\leq 
m_q(d+2)_{n+1}\left(\frac{d+n}{2}\right)^{(d+n)/2}\e^{-(d+n)/2},
\end{equation}
where in the last step we just estimated by the number of terms multiplied 
by the largest term.
Next, we take logarithms and apply elementary estimates to arrive at
\[
(1-p)^n\log \|f^{(n)}\|_\infty\leq (1-p)^n
\left\{\log m_q+(n+1)\log(d+n+2)+
\frac{d+n}{2}\left(\log \frac{d+n}{2}-1\right)\right\}.
\]
As the expression in brackets is is of growth order 
$\mathrm{O}\left((n+d)\log(n+d)\right)$, it follows that the right-hand
side expression tends to $0$ as $n\to+\infty$. 
It is then immediate that $f$ is $(p,0)$-tame, and since $f$ was an arbitrary
element $f\in \S^\mathrm{Her}$, the claim follows.
\end{proof}

Next, we turn to the question of what is required of the weight sequence 
$\M$ in order for $\S^\mathrm{Her}_{p,\M}$ to contain non-trivial 
functions. 
We thus need to estimate the $L^p$-norms of $f^{(n)}$ for 
$f\in \S^{\mathrm{Her}}$. 
By performing the same estimates as in the above proof and by 
appealing to the $p$-triangle inequality (which says that $(a+b)^p\leq a^p+b^p$ 
for $0<p\le1$  and positive $a$ and $b$), we see that
\[
\|f^{(n)}\|_p^p=\int_\R\lvert f^{(n)}(x)\rvert^p\diff x 
\leq m_q^p(d+2)_{n}^p
\sum_{j=0}^{d+n}\int_{\R}\lvert x\rvert^{pj}\e^{-px^2}\diff x.
\]
The integrals on the right-hand side are easily evaluated:
\[
\int_\R\lvert x\rvert^{pj}\e^{-px^2}\diff x=p^{-(pj+1)/2}
\Gamma((pj+1)/2),
\]
so that the above gives the estimate
\[
\|f^{(n)}\|_p^p\le m_q^p(d+2)_{n}^p
\sum_{j=0}^{d+n}p^{-(pj+1)/2}
\Gamma((pj+1)/2)\le m_q^p(d+n+1)(d+2)_{n}^p p^{-(pd+pn+1)/2}
\Gamma((pd+pn+1)/2).
\]
Next, using Stirling's formula yields the estimate that
\[
\Gamma((pd+pn+1)/2)=O\left((n!)^{p/2}n^{p(d-1)/2}
\left(\frac{p}{2}\right)^{pn/2}\right),
\]
whence
\[
m_q^p(d+n+1)(d+2)_n^pp^{-(pd+pn+1)/2}
\Gamma((pd+pn+1)/2) = O
\left(\frac{n^{1+\frac{3p(d-1)}{2}+2p}}{2^{pn}}(n!)^{p+\frac{p}{2}}\right)
\]
Ignoring the specific constants, we find that
 \[
\lVert f^{(n)}\rVert_p^p=O\left((n!)^{3p/2}\frac{n^\alpha}{\beta^n}\right),
\]
where $\alpha>0$ and $\beta>1$ are some constants, and hence it follows that
$\|f^{(n)}\|_p=O\left((n!)^{3/2}\right)$.
We do not proceed to analyse in full detail what $\M=\{M_n\}$ should have 
to fulfill in order for $\S_{p,M}^\mathrm{Her}$ to be a meaningful 
test class, but we note that the above implies that
$\S_{p,M}^{\mathrm{Her}}=\S^{\mathrm{Her}}$ 
if e.g. $\M=\{M_n\}_n$ meets $M_n\ge (n!)^{\sigma}$ for any
$\sigma\geq 3/2$, which we understand as a \emph{Gevrey class} condition.

\subsection{The smooth regime: Theorem~
\ref{thm-smoothmap}}\label{smooth_regime}

We have already presented the bootstrap argument, which is the main 
ingredient in the proof of 
Theorem~\ref{thm-smoothmap}, in the introduction.
We proceed to fill in the remaining details.

\begin{proof}[Proof of Theorem~\ref{thm-smoothmap}] 
We consider the canonical mapping $\bp: f\mapsto (f,f',f'',\ldots)$, 
defined initially from 
$\S_{p,\theta,\M}^\circledast$ into $\ell^\infty (L^p,\M)$. The quasinorm
on the sequence space $\ell^\infty (L^p,\M)$ is supplied in equation
\eqref{eq-quasinormlinfty}. 
Then, as a matter of definition, 
\[
\|\bp f\|_{\ell^\infty (L^p,\M)}=\|f\|_{p,\M},\qquad f\in
\S_{p,\theta,\M}^\circledast
\] 
and passing to the completion the mapping extends to an isometry 
$\bp:\, W_\M^{p,\theta}\to\ell^\infty (L^p,\M)$. 
In particular, the mapping $\bp$ is injective.
We recall that we think of an element $f\in W_\M^{p,\theta}$ as an abstract 
limit of a Cauchy sequence $\{f_j\}_j$ in the norm $\nl \cdot \nr_{p,\M}$, with 
$f_j\in \S_{p,\theta,\M}^\circledast$. 
For such an element, the mapping is obtained by taking the $L^p$-limit 
in all coordinates (that is, the sequence of higher order derivatives). 
This is well-defined, since if we were to take two Cauchy sequences 
$\{f_j\}$ and $\{\tilde{f}_j\}$ with the same abstract limit 
$f\in W_\M^{p,\theta}$, the images under the mapping would agree in each 
coordinate since 
\[
\big\| f_j^{(n)}-\tilde{f}_j^{(n)}\big\|_p\leq M_n\|  f_j-\tilde{f}_j\|_{p,\M}.
\]
Next, we show that the image of $W_\M^{p,\theta}$ under the mapping 
is actually inside 
$C^\infty\times C^\infty\times\ldots$. Indeed, since all the differences 
$f_j-f_k$ are in $\S_{p,\theta,\M}^{\circledast}$, it follows from 
\eqref{eq-basicineq1.9} that
\[
\big\|f_j^{(n)}-f_k^{(n)}\big\|_\infty \leq \bigg\{\e^{p(1-p)^{-n-1}\kappa(p,\M)}
\prod_{l=1}^{n}M_{l}^{-(1-p)^{l-n-1}p}\bigg\}\,\| f_j-f_k\|_{p,\M},
\]
where the right hand side tends to zero as $j,k\to{+\infty}$. It is now pretty
obvious that under the finite $p$-characteristic condition 
$\kappa(p,\M)<+\infty$, each function $f_j^{(n)}$ has a limit $g_n\in C(\R)$ 
as $j\to+\infty$. Moreover, as all the derivatives converge uniformly, 
we conclude that $g_n=g_{n-1}'=\ldots =g_0^{(n)}$ for each $n=0,1,2,\ldots$, 
and hence that
\[
\partial \bp_n f=\bp_{n+1}f,\qquad n=0,1,2,\ldots.
\]
From this relation it follows that the first coordinate map $\bp_0$ is 
injective. Indeed, if $\bp_0f=0$, then by iteration $\bp_n f=0$ for 
$n=0,1,2,\ldots$. Hence $\bp f=0$ and thus $f=0$ by the injectivity of $\bp$.

Finally, we show that the given test class is stable under the completion. 
We already saw that the limiting functions are of class $C^\infty$, and 
naturally $\|f\|_{p,\M}<+\infty$ for each $f\in W_\M^{p,\theta}$. 
What remains is to show that
\[
\limsup_{n\to+\infty} (1-p)^n\log \| f^{(n)}\|_\infty \leq \theta.
\]
However, the bound \eqref{eq-basicineq1.9} tells us that
\[
\|f^{(n)}\|_\infty\le\|f\|_{p,\M}\,\e^{\theta(1-p)^{-n}+p(1-p)^{-n-1}\kappa(p,\M)}
\prod_{j=1}^{n}M_{j}^{-(1-p)^{j-n-1}p},
\]
so that
\[
(1-p)^n\log\|f^{(n)}\|_\infty \leq \theta+\frac{p}{(1-p)}
\bigg\{\kappa(p,\M)-\sum_{j=1}^n(1-p)^{j}\log M_j\bigg\}+(1-p)^n\log\|f\|_{p,\M}.
\]
By Remark~\ref{rem:001} (b), it follows that 
\[
\kappa(p,\M)-\sum_{j=1}^n(1-p)^{j}\log M_j\to 0, \quad n\to{+\infty},
\]
and hence
\[
\limsup_{n\to+\infty}(1-p)^n\log\|f^{(n)}\|_\infty \leq \theta,
\]
as claimed. 
\end{proof}

\section{Independence of derivatives}
\label{sec-uncoupling}
\subsection{Some notation and preparatory material} 
We previously considered the mapping $\bp: f\mapsto (f, f', f'',\ldots)$. 
To conform with notation introduced earlier in this paper as well as in
the work of Peetre, we will use the letter $\ba$ instead of $\bp_0$ to 
refer to the first coordinate map $\ba: W^{p,\theta}_\M\to L^p$. Likewise, 
we write $\bd$ for the mapping $W_\M^{p,\theta}\to W_{\M_1}^{p,\theta(1-p)^{-1}}$
of taking the derivative (on the test functions). Here, we recall that $\M_1$ 
is the shifted sequence $\M_1=\{M_{n+1}\}_n$. More precisely, in terms of a 
Cauchy sequence $\{f_j\}_j$ of test functions in 
$\S_{p,\theta,\M}^\circledast$ which converges in quasinorm to an 
abstract limit $f\in W^{p,\theta}_\M$, we write 
\[
\ba f=\lim_j\ba f_j=\lim_j f_j\in L^p\quad\text{ and }\quad 
\bd f=\lim_j\bd f_j=\lim_j f_j'\in W_{\M_1}^{p,\theta(1-p)^{-1}}.
\]

As we saw in connection with the Sobolev $W^{k,p}$-spaces for $0<p<1$, the key 
step was the construction of the two lifts $\bb$ and $\bg$. If we may find the
analogous lifts in the present setting, the rest of the proof will carry over
almost word for word from the Sobolev space case.
It turns out that the lifts $\bb$ and $\bg$ are built in the same manner as 
before, using the existence of invisible mollifiers $\Phi_\epsilon$ analogous to 
the case of $W^{k,p}$ (compare with Lemma~\ref{lem:mollifier-finite}),
with slight technical obstacles in the construction of $\bg$,
related to the unbounded support of test functions.

Before turning to the mollifiers, we make an observation regarding the weight 
sequence. We will make a dichotomy between the cases 
\[
\lim_{n\to{+\infty}} (1-p)^n\log M_n =0 \quad \text{and}\quad 
\liminf_{n\to{+\infty}} \log (1-p)^n\log M_n>0
\] 
in the definition of $p$-regularity. 
%

\begin{lem}\label{lem:minorant}
Let $\M$ be a log-convex sequence such that $\kappa(p,\M)=+\infty$ and 
assume that 
\[
\liminf_{n\to\infty}(1-p)^n\log M_n>0.
\] 
Then there exists a $p$-regular minorant sequence $\wtM$ with 
$\kappa(p,\wtM)=+\infty$, such that 
\[
\lim_{n\to\infty} (1-p)^n\log\twtM_n=0.
\]
\end{lem}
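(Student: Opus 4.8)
The plan is to construct $\wtM$ as an explicit log-convex minorant of $\M$ whose growth is ``slowed down'' to the critical rate $\log\twtM_n = \mathrm{o}((1-p)^{-n})$, while retaining just enough growth to keep the $p$-characteristic infinite. Write $L:=\liminf_{n\to\infty}(1-p)^n\log M_n>0$, so that $\log M_n \ge (L/2)(1-p)^{-n}$ for all large $n$; in particular $\log M_n$ grows at least geometrically with ratio $(1-p)^{-1}$. The idea is to pick a slowly increasing sequence of positive reals $\omega_n\downarrow 0$ (to be chosen), set $m_n := \omega_n (1-p)^{-n}$, and let $\log\twtM_n$ be, roughly, the largest log-convex minorant of $\min\{\log M_n,\, m_n\}$. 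Since $m_n/\log M_n \le \omega_n \cdot \tfrac{2}{L} \to 0$, the truncation is active for all large $n$, so $\log\twtM_n \asymp m_n = \omega_n(1-p)^{-n}$ up to the log-convexification, and hence $\lim_n (1-p)^n\log\twtM_n = 0$ automatically, giving the stated decay.

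First I would make the log-convexification harmless: because $n\mapsto (1-p)^{-n}$ is itself log-convex (indeed $\log$ of it is linear in $n$), if $\omega_n$ is chosen \emph{non-increasing} and \emph{log-convex as a correction}, say $\omega_n = \exp(-c_n)$ with $c_n\uparrow\infty$ concave and $c_n = \mathrm{o}(n)$, then $\log m_n = -c_n + n\log\frac{1}{1-p}$ is already convex in $n$ for $n$ large (convex plus concave-with-small-second-difference), so no genuine convexification is needed beyond finitely many indices — and finitely many indices may be adjusted freely while keeping log-convexity, as noted in the remark after Definition~\ref{def:reg-1}. Thus I would simply \emph{define} $\twtM_n := \min\{M_n, \exp(m_n)\}$ for $n\ge n_0$ and patch the first $n_0$ terms to restore log-convexity; the minorant property $\twtM_n\le M_n$ is built in.

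Next I must verify the two substantive requirements: $\kappa(p,\wtM)=+\infty$ and $p$-regularity of $\wtM$. For the $p$-characteristic, recall from Remark~\ref{rem:001}(b) that for log-convex sequences $\kappa(p,\wtM)=\sum_{j\ge0}(1-p)^j\log\twtM_j$, and once the truncation is active this is $\sum_j (1-p)^j m_j = \sum_j \omega_j$ up to a finite correction. So the requirement becomes exactly $\sum_j \omega_j = +\infty$: I would take, e.g., $\omega_n = 1/(n+2)$, i.e. $c_n = \log(n+2)$, which is concave, increasing, $\mathrm{o}(n)$, and has divergent sum. With this choice $\log\twtM_n = n\log\frac{1}{1-p} - \log(n+2)$ for large $n$, which visibly satisfies both clauses of $p$-regularity alternative (ii): $\log\twtM_n = \mathrm{o}((1-p)^{-n})$ holds since $\omega_n\to0$, and the lower bound $n\log n \le (\delta+\mathrm{o}(1))\log\twtM_n$ holds for \emph{any} $\delta>0$ (so certainly some $\delta<1$) because $\log\twtM_n \sim n\log\frac{1}{1-p}$ dominates $n\log n$... wait — it does not; $n\log n$ grows faster than $n\cdot\text{const}$.

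Let me correct this: the second clause of (ii) requires $n\log n \le (\delta+\mathrm{o}(1))\log\twtM_n$, so I actually need $\log\twtM_n$ to grow at least like $n\log n$. Therefore I cannot slow $\log\twtM_n$ all the way down to the geometric rate $n\log\frac{1}{1-p}$ while staying in clause (ii); instead I keep it at rate $\exp$ of something genuinely between $n\log n$ and $(1-p)^{-n}$. So I would instead set $m_n := (1-p)^{-n}\omega_n$ with $\omega_n\to0$ but $\omega_n$ decaying \emph{slowly enough} that $m_n \gg n\log n$ — automatic, since $(1-p)^{-n}$ beats any polynomial-log factor as long as $\omega_n$ decays subexponentially, e.g. $\omega_n = 1/(n+2)$ still works: $m_n = (1-p)^{-n}/(n+2) \gg n\log n$. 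Then $\log\twtM_n = m_n$ (up to finite patching) satisfies $\log\twtM_n = \mathrm{o}((1-p)^{-n})$ and $n\log n = \mathrm{o}(\log\twtM_n)$, so clause (ii) holds with, say, $\delta=1/2$; and $\kappa(p,\wtM)=\sum(1-p)^j m_j = \sum \omega_j = \sum 1/(j+2)=+\infty$. The log-convexity of $n\mapsto m_n = (1-p)^{-n}/(n+2)$ for large $n$ is the one routine check: $\log m_n = n\log\frac{1}{1-p} - \log(n+2)$ has second difference $-[\log(n+4)-2\log(n+3)+\log(n+2)] > 0$ since $\log$ is concave, so $\log m_n$ is convex — good. \textbf{The main obstacle} is precisely this balancing act: one must choose the slow-down factor $\omega_n$ in a window that simultaneously (a) kills the normalized growth, $(1-p)^n\log\twtM_n\to0$, (b) keeps $\sum(1-p)^n\log\twtM_n = \sum\omega_n$ divergent so $\kappa(p,\wtM)=+\infty$, and (c) leaves enough growth that $\log\twtM_n$ still dominates $n\log n$ as demanded by $p$-regularity clause (ii) — together with checking that the resulting $\twtM_n = \min\{M_n, e^{m_n}\}$ is a genuine minorant (immediate) and can be made log-convex by adjusting finitely many terms. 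The choice $\omega_n \asymp 1/n$ threads all three; once it is fixed, every remaining verification is a short computation.
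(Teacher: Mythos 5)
Your construction is essentially the paper's: the paper simply sets $c_0=\liminf_{n\to+\infty}(1-p)^n\log M_n>0$ and takes $\twtM_n=C_1\exp\bigl(c_0(n+1)^{-1}(1-p)^{-n}\bigr)$, with the constant $C_1$ chosen small enough that $\twtM_n\le M_n$ for all $n$ — i.e. the same choice $\log\twtM_n\asymp (1-p)^{-n}/n$ you arrive at, with a single global constant playing the role of your min-with-$M_n$ plus finite patching; the verifications ($(1-p)^n\log\twtM_n\to0$, $\kappa(p,\wtM)=\sum_n c_0/(n+1)=+\infty$, clause (ii) of $p$-regularity since $n\log n=\mathrm{o}\bigl((1-p)^{-n}/n\bigr)$) are identical. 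One small correction to your final check: log-convexity of $\wtM$ means convexity of $n\mapsto\log\twtM_n=m_n$, not convexity of $\log m_n$, which is what your second-difference computation establishes; this is harmless, because $\{m_n\}$ is log-convex by that computation and eventually increasing (its ratio $q(n+2)/(n+3)\to q=(1-p)^{-1}>1$), and a log-convex, eventually increasing sequence has nondecreasing increments and is therefore eventually convex — so $\wtM$ is log-convex after adjusting finitely many initial terms, which your patching step already permits.
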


For more complete details of this proof, see \cite{BehmWennman}, 
as well as Behm's thesis \cite{BehmThesis}, where the former paper is 
discussed. 

\begin{proof}[Proof sketch]
Let $c_0=\liminf _{n\to+\infty}(1-p)^n\log M_n>0$. Then
\[
c_0(1-p)^{-n}\leq (1+\mathrm{o}(1))\log M_n,
\]
so that
\[
\frac{c_0}{n+1}(1-p)^{-n}\leq \frac{1}{n+1}(1+\mathrm{o}(1))\log M_n
\le \log M_n+\mathrm{O}(1).
\]
We now put 
\[
\twtM_n=C_1\e^{c_0(n+1)^{-1}(1-p)^{-n}},
\]
where the positive constant $C_1$ is chosen such that 
$\twtM_n\leq M_n$ for all integers $n\geq 0$. 
It follows that $\wtM=\{\twtM_n\}_n$ is $p$-regular with 
$\kappa(p,\wtM)=+\infty$, and that $(1-p)^n\log \twtM_n=\mathrm{o}(1)$,
as needed.
\end{proof}

\subsection{The construction of mollifiers}
The invisibility lemma runs as follows. 

\begin{lem}[invisibility lemma for $L^p$-Carleman spaces]\label{mollifier}
Assume that $\M$ is $p$-regular and has infinite $p$-characteristic 
$\kappa(p,\M)=+\infty$. 
For any $\epsilon>0$ there exists a non-negative function 
$\Phi\in \S_{p,0,\M}^\circledast$ (with $\theta=0$) such that
\[
\int_{\R} \Phi\dd x=1, \qquad \supp \Phi\subset [0,\epsilon], 
\qquad \lVert \Phi\rVert_{p,\M}<\epsilon.
\]
\end{lem}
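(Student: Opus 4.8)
The plan is to mimic the finite-order construction of Lemma~\ref{lem:mollifier-finite}, replacing the finite convolution $\Phi_{1,k+1}$ by the infinite convolution $\Phi_{1,\infty}=H_{a_1}*H_{a_2}*\cdots$ from Subsection~\ref{ss:convol}, and then to exploit $p$-regularity to choose the sequence $\{a_j\}_{j\ge1}$ so fast-decaying that all the relevant quasinorms are controlled \emph{simultaneously}. Concretely, for a decreasing $\ell^1$-sequence $\{a_j\}$ with $a_{j+1}\le\frac12 a_j$ we set $\Phi:=\Phi_{1,\infty}$. Then automatically $\Phi\ge0$, $\int_\R\Phi\,\diff x=1$, $\supp\Phi\subset[0,\sum_j a_j]\subset[0,2a_1]$, and $\Phi\in C^\infty(\R)$. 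The estimates \eqref{eq-elest1.002} and \eqref{eq-elest1.004} give
\[
\|\Phi^{(n)}\|_\infty\le\frac{2^n}{a_1\cdots a_{n+1}},\qquad
\|\Phi^{(n)}\|_p^p\le\frac{2^n}{(a_1\cdots a_{n+1})^p}\sum_{l\ge n+1}a_l
\le\frac{2^{n+1}a_{n+1}}{(a_1\cdots a_{n+1})^p},
\]
so everything is reduced to choosing $\{a_j\}$.

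The first requirement is $\|\Phi\|_{p,\M}<\epsilon$, i.e.\ $\|\Phi^{(n)}\|_p\le\epsilon M_n$ for all $n\ge0$ (and $\|\Phi\|_p<\epsilon$ for $n=0$). Because $\kappa(p,\M)=+\infty$ the weights $M_n$ grow so fast (see Remark~\ref{rem:001}(a)) that one can recursively define $a_{n+1}$ small enough — as a function of $a_1,\dots,a_n$, of $M_n$, and of $\epsilon$ — to force $2^{n+1}a_{n+1}(a_1\cdots a_{n+1})^{-p}\le(\epsilon M_n)^p$; since the constraint on $a_{n+1}$ involves only a positive power $a_{n+1}^{1-p}$ on the binding side (as in the proof of Lemma~\ref{lem:mollifier-finite}), it can always be met while also enforcing $a_{n+1}\le\frac12 a_n$ and $a_1\le\frac\epsilon2$. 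The second, and genuinely new, requirement is tameness with $\theta=0$: we must arrange
\[
\limsup_{n\to+\infty}(1-p)^n\log\|\Phi^{(n)}\|_\infty\le0,
\]
and from the bound above it suffices to make $(1-p)^n\sum_{j=1}^{n+1}\log(1/a_j)$ not blow up, i.e.\ $\log(1/a_j)$ must not grow faster than roughly $(1-p)^{-j}$. This is where $p$-regularity enters: the recursive choice forces $\log(1/a_{n+1})\approx\max\{\,\log 2+(1-p)\log(1/a_1\cdots a_n)+\log(1/\epsilon)+p\log M_n,\ \log 2+\log(1/a_n)\,\}$ up to bounded errors, and one checks by induction that this keeps $\log(1/a_n)=\mathrm{O}((1-p)^{-n}+\log M_n)$; under $p$-regularity (either $\log M_n=\mathrm{o}((1-p)^{-n})$ in case~(ii), or the lower bound in case~(i) combined with Lemma~\ref{lem:minorant} to replace $\M$ by a minorant $\wtM$ with $(1-p)^n\log\twtM_n\to0$), the term $(1-p)^n\log(1/a_n)$ stays bounded, in fact tends to a finite limit that can be driven to be $\le 0$ because $\epsilon$ is at our disposal and each recursion step carries a factor $\epsilon^p$. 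Thus $\Phi\in\S_{p,0,\M}^\circledast$.

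The main obstacle is precisely this bookkeeping: showing that the \emph{same} recursively-chosen sequence $\{a_j\}$ simultaneously yields $\|\Phi\|_{p,\M}<\epsilon$ \emph{and} the $(p,0)$-tameness bound, i.e.\ that the $L^p$-smallness requirement does not force $a_j$ to shrink so fast that the $L^\infty$-norms of the derivatives blow up super-$(1-p)^{-n}$. The resolution is that in the regime $\kappa(p,\M)=+\infty$ the weights $M_n$ are so large that the defining inequality for $a_{n+1}$ is very far from tight — the factor $(\epsilon M_n)^p$ on the right is enormous — so $a_{n+1}$ need only be comparable to $a_n$ up to a controlled factor, which is exactly what keeps $\log(1/a_n)$ of the admissible order. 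I would therefore (i) set up the recursion for $\{a_j\}$ explicitly, splitting into the two cases of Definition~\ref{defn-1.4.3} and invoking Lemma~\ref{lem:minorant} in case~(i) so that it suffices to treat a weight with $(1-p)^n\log M_n\to0$; (ii) verify by induction the two-sided control $\log(1/a_n)\asymp(1-p)^{-n}$ (up to $\mathrm{o}((1-p)^{-n})$ and a negative $\log\epsilon$ contribution); (iii) read off $\supp\Phi\subset[0,\epsilon]$, $\|\Phi\|_{p,\M}<\epsilon$, and the tameness inequality from the estimates above; and (iv) conclude $\Phi\in\S_{p,0,\M}^\circledast$ with all three required properties.
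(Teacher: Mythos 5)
Your overall strategy (infinite convolution $\Phi_{1,\infty}$ with a recursively chosen, rapidly halving sequence $\{a_j\}$, extending the scheme of Lemma~\ref{lem:mollifier-finite}) is reasonable, and the support and quasinorm requirements can indeed be met this way. But there is a genuine gap at exactly the point you yourself identify as the crux: the verification of $(p,0)$-tameness is never carried out, and the mechanism you invoke for it is flawed. Writing $x_j=\log(1/a_j)$, your recursion (when the norm constraint binds) gives
\[
x_{n+1}=\frac{1}{1-p}\Big[(n+1)\log 2+p\log(1/\epsilon)-p\log M_n+p\sum_{j\le n}x_j\Big],
\]
so each step injects a \emph{positive} contribution of size $p\log(1/\epsilon)+\mathrm{O}(n)$, and these accumulate in $(1-p)^n\sum_{j\le n+1}x_j$ to a positive constant of order $\log(1/\epsilon)$ plus the contribution of the early steps. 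Without a genuine argument that the later dynamics (the $-p\log M_n$ terms, or long stretches where the halving constraint binds) contract this quantity all the way to $0$, what you obtain is only $(p,\theta')$-tameness for some $\theta'>0$ depending on $\epsilon$ and the initial data, which does not place $\Phi$ in $\S_{p,0,\M}^\circledast$. Your stated resolution --- that the limit ``can be driven to be $\le 0$ because $\epsilon$ is at our disposal'' --- is not available: $\epsilon$ is prescribed in the lemma (the statement is for every $\epsilon>0$), and in any case shrinking $\epsilon$ forces the $a_j$ to be \emph{smaller}, i.e.\ pushes $\|\Phi^{(n)}\|_\infty$ and hence the tameness $\limsup$ \emph{up}, not down. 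Also note that $\kappa(p,\M)=+\infty$ is a statement about the series $\sum_j(1-p)^j\log M_j$ and does not make $M_n$ ``enormous'' pointwise in the way your heuristic uses; the interplay between the norm-binding and halving regimes is precisely the delicate bookkeeping that remains unproved, and it is telling that your sketch never uses the quantitative part of $p$-regularity (the bound $n\log n\le(\delta+\mathrm{o}(1))\log M_n$ of Definition~\ref{def:reg-1}(ii)), which is what must absorb the combinatorial factors somewhere.

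For comparison, the paper avoids the recursion entirely: after the reduction via Lemma~\ref{lem:minorant} to the case $(1-p)^n\log M_n\to0$, it takes $a_l=c_l\,\alpha_{l,k}$ with $c_l=l^{-(1+\rho)}$ and $\alpha_{l,k}=b_{l,k}/b_{l+1,k}$ built from the \emph{truncated} products $b_{l,k}=\prod_{j=0}^{k}M_{l-1+j}^{r(1-p)^j}$ with $k$ \emph{fixed}. Tameness with $\theta=0$ is then immediate because $(1-p)^n\log(1/(a_1\cdots a_{n+1}))$ reduces to a finite sum of terms $(1-p)^{n+j}\log M_{n+j}\to0$; the estimate \eqref{normsequence} produces $\|\Phi^{(n)}\|_p^p\le \e^{K}b_{1,k}^{-p}M_n^p$ with $K$ independent of $k$, where the factor $2^n c/(c_1\cdots c_{n+1})^p=\e^{\mathrm{O}(n\log n)}$ is absorbed by choosing $(p-r)\ge\delta^{-1}(1+\rho)p$ using exactly the $n\log n$ lower bound on $\log M_n$; and the smallness of both the support and the quasinorm comes from $b_{1,k}\to+\infty$ as $k\to+\infty$, which is precisely where $\kappa(p,\M)=+\infty$ enters. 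If you want to salvage your recursive route, you would have to prove the inductive two-sided control on $\log(1/a_n)$ you assert, including the fact that the $\log(1/\epsilon)$ and early-step contributions are eventually annihilated, which is the hard part; the paper's truncation trick is designed to sidestep exactly this.
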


The tools needed to prove the lemma were developed back in 
Section~\ref{ss:convol},
in connection with the Invisibility Lemma for $W^{k,p}$. For a sequence 
$\{a_l\}_{l=1}^\infty$
we form the associated convolution product $\Phi_{1,\infty}$. We recall that
$\Phi_{1,\infty}$ has support $[0,\sum_{l\geq 1} a_l]$,
and enjoys the estimates \eqref{eq-elest1.002} and \eqref{eq-elest1.004}, 
provided that
$(a_j)_{j\geq 1}$ is a decreasing $\ell^1$-sequence. 

In order to bound the support of $\Phi^{(n)}$, will need to estimate the 
sums $\sum_{j\geq n+1}a_j$. 
If we write $a_j=c_j \alpha_j$ and ask for $\{\alpha_j\}_{j}$ to be a
decreasing sequence of positive numbers, and that $\{c_j\}_j\in \ell^1$, 
we obtain the bound
\begin{equation}\label{eq:separation}
\sum_{j=n+1}^{+\infty}a_j\leq c \alpha_{n+1},\qquad n\geq 0,
\end{equation}
where $c=\lVert \{c_j\}_{j}\rVert_{\ell^1}$.
Hence the right-hand side of \eqref{eq-elest1.004} may be estimated further:
\[
\lVert \Phi_{j,\infty}^{(n)}\rVert_p^p\leq 2^n 
\frac{c\alpha_{n+1}}{(a_1\cdots a_{n+1})^p}
=\frac{2^nc}{(c_1\cdots c_{n+1})^p}\frac{\alpha_{n+1}}{(\alpha_1\cdots 
\alpha_{n+1})^p}.
\]
We summarise what we ask for a sequence $a=\{a_l\}_{l}=\{c_l\alpha_l\}_{l}$ 
to satisfy
in to guarantee that $\Phi_{1,\infty}=\Phi_{1,\infty,a}$ satisfies the conclusion 
of the lemma. 
First, for the estimates \eqref{eq-elest1.002} and \eqref{eq-elest1.004} 
to come into play, 
we need \eqref{eq:separation}, which is satisfies as soon as 
$\{c_j\}_{j}\in \ell^1$ and 
$\{\alpha_j\}_j$ is decreasing. The $(p,0)$-tameness is, 
in view of \eqref{eq-elest1.002}, ensured by
\begin{equation}\label{limsupsequence}
\limsup_{n\to\infty}(1-p)^n\log \frac{1}{a_1\cdots a_n}\leq 0.
\end{equation}
In order to verify the norm control 
$\lVert \Phi_{1,\infty}\rVert_{p,\M}<\epsilon$, 
we shall require
\begin{equation}\label{normsequence}
\frac{2^nc}{(c_1\cdots c_{n+1})^p}
\frac{\alpha_{n+1}}{(\alpha_1\cdots \alpha_{n+1})^p}\leq 
\epsilon^pM_n^p,\qquad n\geq 0.
\end{equation}
Moreover, the assertion that $\supp \Phi_{1,\infty}\subset [0,\epsilon]$ is 
equivalent to having $\lVert a\rVert_{\ell^1}\leq\epsilon$,
which follows from the requirement $\alpha_1\leq \epsilon/c$.

Let us briefly discuss the structural consequences for sequences 
$\{\alpha_l\}_l$ 
satisfying a requirement of the form
\[
\frac{\alpha_{l+1}}{(\alpha_1\cdots \alpha_{l+1})^p}\leq DM_l^r,
\]
where $D$ and $r$ are positive constants. 

Due to the product structure, we make a quotient ansatz $\alpha_l=b_l/b_{l+1}$.
In terms of $\{b_l\}_{l\geq 1}$, the above relations read
\[
\frac{b_{l+1}}{b_1^p b_{l+2}^{1-p}}\leq DM_l^r,\qquad l\geq 0.
\]
Since $a_1$ is determined by two numbers $b_i, i=1,2$, we are free to 
choose $b_1$. To simplify matters we fix $b_1^pD=1$.
Then in the extremal case where we have equality for $l\geq 0$,
\[
\frac{b_{l+1}}{b_{l+2}^{1-p}}=M_l^r
\]
this gives iteratively the sequence $\{b_l\}_l$ as
\[
b_{l+2}=(b_{l+1}\Lambda_l)^{1/(1-p)}, \qquad \Lambda_l=M_l^{-r}.
\]
for $l\geq 0$. In terms of $b_1$ and $\{\Lambda_j\}_j$, we obtain 
$b_{l+2}=(b_1\Lambda_0\Lambda_1^{1-p}\cdots\Lambda_n^{(1-p)^l})^{(1-p)^{-l-1}}$, or
\[
b_{l+2}=\left(b_1 M_0^{-r}M_1^{-r(1-p)}\cdots M_n^{-r(1-p)^l}\right)^{(1-p)^{-l-1}}
\]
In case $\kappa(p,\M)$ would have been finite, we could put 
$b_1=\prod_{j\geq 0} M_{j}^{r(1-p)^j}$ and obtain
\begin{equation}\label{eq:div-prod}
b_{l}=\prod_{j\geq 0}M_{l+j-1}^{r(1-p)^j},\qquad n\geq 1.
\end{equation}
Since in the case at hand, we have $\kappa(p,\M)=+\infty$, 
such a choice will not work directly. 
However, it gives a hint as to how to chose these numbers.

With these preliminaries, we begin our construction. 

\begin{proof} By Lemma~\ref{lem:minorant}, we may assume without loss of 
generality that 
\[
\lim_{n\to+\infty}(1-p)^n\log M_n=0.
\]
If $\M$ violates this, then we replace $\M$ by a minorant $\wtM$, and 
proceed as below.
The result then follows since $\S^\circledast_{p,0,\wtM}
\subset \S^\circledast_{p,0,\M}$,
where the inclusion is contractive in quasinorm.

By the assumptions of $p$-regularity the number
\[
\delta:=\liminf_{n\to\infty}\frac{\log M_n}{n\log n}
\]
satisfies $1<\delta\leq  +\infty$. Let $r$ and $\rho$ be numbers 
with $0<r\leq p$ and $\rho>0$, such that
\[
(p-r)\geq \delta^{-1}(1+\rho)p,
\]
which is possible since $\delta>1$.
Led by the above considerations, we consider instead of 
\eqref{eq:div-prod} the truncated products
\[
b_{l,k}=\prod_{j=0}^k M_{l-1+j}^{r(1-p)^{j}},\quad l\geq 1
\]
where $k$ remains to be chosen. Next, we write
\[
\alpha_{l,k}=\frac{b_{l,k}}{b_{l+1,k}},\qquad c_l=\frac{1}{l^{1+\rho}},
\qquad l\geq 1,
\]
and recall that we want $a_{l,k}=c_l\alpha_{l,k}$.
Since $\M$ is assumed to be logarithmically convex,
\[
\alpha_l=\prod_{j=0}^k\left(\frac{M_{l-1+j}}{M_{l+j}}\right)^{p(1-p)^{j+1}}\leq 
\prod_{j=0}^k\left(\frac{M_{l-2+j}}{M_{l+j-1}}\right)^{p(1-p)^{j+1}}=\alpha_{l-1},
\quad l\geq 2.
\]
It follows that $\{a_l\}_l$ is a decreasing sequence, so since 
$\{c_l\}_l\in \ell^1$, the above discussion is applicable. In particular, 
the support of $\Phi$ is included in $[0,c \alpha_{1,k}]$, which we may 
calculate as
\[
c a_{1,k}=c \frac{b_{1,k}}{b_{2,k}}=c\frac{M_0^{r}}{ M_{k+1}^{r(1-p)^{k}}} 
\frac{1}{\prod_{j=1}^k M_{j}^{pr(1-p)^{j-1}}}.
\]
Since $\kappa(p,\M)=+\infty$ and since $M_{k+1}\ge 1$ for large enough $k$, 
this tends to zero as $k\to+\infty$.

To verify that $\Phi$ is $(p,0)$-tame, we observe that
\begin{align*}
(1-p)^n\log \lVert \Phi^{(n)}\rVert_\infty&\leq (1-p)^n\log
\left(a_1\cdots a_{n+1}\right)^{-1} 
\\&=(1-p)^n\log \frac{b_{n+2,k}}{b_{1,k}}+(1+\rho)(1-p)^n\sum_{j=1}^n\log j
\end{align*}
so that if $k$ is large enough for $b_{1,k}\geq 1$ to hold,
\[
(1-p)^n\log\lVert \Phi^{(n)}\rVert_\infty\leq r(1-p)\sum_{j=0}^k(1-p)^{j+n}
\log M_{n+j}+o(1)=o(1),
\]
since $k$ is held fixed when $n\to\infty$, and $(1-p)^n\log M_n\to 0$.

We turn to demonstrate \eqref{normsequence}. We first observe that
\[
2^{n}c\frac{1}{(c_1\cdots c_{n+1})^p}=\e^{f(n)},
\]
where $f(n)=\log c + n\log 2+ p(1+\rho)\log[(n+1)!]$. Moreover, it holds that
\[
\frac{\alpha_{n+1,k}}{(\alpha_{1,k}\cdots \alpha_{n+1,k})^p}=
\frac{\frac{b_{n+1,k}}{b_{n+1,k}}}{\left(\frac{b_{1,k}}{b_{n+2,k}}\right)^p}=
\frac{b_{n+1,k}}{b_{1,k}^pb_{n+2,k}^{1-p}}=
\frac{1}{b_{1,k}^p}\frac{1}{M_{n+k+1}^{r(1-p)^{k}}M_{n}^{p-r}}M_n^{p}
\]
Putting this together while esimating $M_{n+k+1}^{r(1-p)^k}\geq 1$, we 
obtain the estimate
\[
\lVert \Phi_{1,\infty}^{(n)}\rVert_p^p\leq \frac{1}{b_{1,k}^p}
\e^{f(n)-(p-r)\log M_n}M_n^p.
\]
In particular, observe that 
\[
f(n)-(p-r)\log M_n\leq p(1+\rho) n\log n-\delta^{-1}\log M_n+\mathrm{O}(1)= 
\mathrm{O(1)}
\]
is bounded above by some constant $K$ independently of $k$. 
In terms of $\lVert \Phi_{1,\infty}^{(n)}\rVert_p^p$, this means that
\[
\lVert \Phi_{1,\infty}^{(n)}\rVert_p^p\leq \frac{\e^K}{b_{1,k}^p}M_n^p.
\]
Since $b_{1,k}\to+\infty$, as $k\to\infty$, and since $K$ is independent of $k$, 
the estimate $\lVert \Phi_{1,\infty}\rVert_\M$ follows if we choose $k$ 
large enough.
This finished the proof.
\end{proof}

\subsection{Construction of $\bb$ and $\bg$}
\label{subsec-bbbg}
The existence of invisible mollifiers finally allows us to define the 
lifts $\bb:L^p\to W^{p,\theta}_\M$ and $\bg: W_{\M_1}^{p,\theta_1}\to W_{\M}^{p,\theta}$.

\begin{prop}
\label{prop-beta}
Suppose $\M$ is $p$-regular with $\kappa(p,\M)=+\infty$. Then there exists
a continuous linear mapping $\bb: L^p\to W_\M^{p,\theta}$ such that
\[
\ba\circ\bb=\operatorname{id}_{L^p} \quad\text{and}\quad \bd\circ\bb=0.
\]
\end{prop}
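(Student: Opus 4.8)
The plan is to mimic the construction of $\bb$ from the Sobolev case (Subsection~\ref{ss:lift-b}), substituting the invisible mollifiers of Lemma~\ref{mollifier} for those of Lemma~\ref{lem:mollifier-finite}. First I would reduce, as before, to defining $\bb$ on the dense subspace $\mathscr{F}_p$ of step functions, and in fact on a single characteristic function $g=1_{[a,b]}$, since finite linear combinations then take care of the rest. For $g=1_{[a,b]}$, I fix a sequence $\epsilon_j\to 0^+$ with $0<\epsilon_j<\tfrac12(b-a)$, and for each $j$ invoke Lemma~\ref{mollifier} to produce a non-negative $\Phi_{\epsilon_j}\in\S_{p,0,\M}^\circledast$ with unit integral, $\supp\Phi_{\epsilon_j}\subset[0,\epsilon_j]$, and $\|\Phi_{\epsilon_j}\|_{p,\M}<\epsilon_j$. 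Then set $g_j:=\Phi_{\epsilon_j}*1_{[a,b]}$. Because $\Phi_{\epsilon_j}$ is $(p,0)$-tame and smooth with all derivatives bounded, and $1_{[a,b]}\in L^p\cap L^\infty$ has compact support, the convolution $g_j$ lies in $\S_{p,\theta,\M}^\circledast$: it is $C^\infty$ with compact support, $g_j^{(n)}=(\tau_a-\tau_b)\Phi_{\epsilon_j}^{(n-1)}$ for $n\ge1$ so $\|g_j\|_{p,\M}<+\infty$, and $(p,0)$-tameness (hence $(p,\theta)$-tameness) is inherited from $\Phi_{\epsilon_j}$.

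Next I would verify the two defining limits, exactly paralleling \eqref{eq-defbb1.1}: that $\|g-g_j\|_p\to0$ and $\|\bd g_j\|_{p,\M_1}\to0$. For the first, $g_j-g$ is supported on $[a,a+\epsilon_j]\cup[b,b+\epsilon_j]$ and bounded by $1$ there, so $\|g_j-g\|_p^p\le 2\epsilon_j\to0$. For the second, $g_j'=(\tau_a-\tau_b)\Phi_{\epsilon_j}$, so by translation invariance and the $p$-triangle inequality $\|g_j'^{(n)}\|_p^p\le 2\|\Phi_{\epsilon_j}^{(n+1)}\|_p^p$, hence $\|g_j'\|_{p,\M_1}=\sup_n\|\Phi_{\epsilon_j}^{(n+1)}\|_p/M_{n+1}\le\sup_m\|\Phi_{\epsilon_j}^{(m)}\|_p/M_m=\|\Phi_{\epsilon_j}\|_{p,\M}<\epsilon_j\to0$ (possibly up to a harmless factor $2^{1/p}$ absorbed by shrinking $\epsilon_j$). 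As in the Sobolev case, these two facts show $\{g_j\}_j$ is $\|\cdot\|_{p,\M}$-Cauchy — since $\|g_j-g_k\|_{p,\M}^p=\|g_j-g_k\|_p^p+\|g_j'-g_k'\|_{p,\M_1}^p$ and both terms are small — so it has an abstract limit $\bb g\in W_\M^{p,\theta}$; the limit does not depend on the chosen Cauchy sequence by the same equivalence-of-Cauchy-sequences argument as in Subsection~\ref{ss:lift-b}. The computation $\|\bb g\|_{p,\M}=\|g\|_p$ then exhibits $\bb:\mathscr{F}_p\to W_\M^{p,\theta}$ as an isometry, which extends uniquely to an isometry (in particular a continuous linear map) $\bb:L^p\to W_\M^{p,\theta}$.

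Finally, the identities $\ba\circ\bb=\id_{L^p}$ and $\bd\circ\bb=0$ come for free from the construction: $\ba g_j=g_j\to g$ in $L^p$ gives $\ba\bb g=g$, while $\bd g_j=g_j'$ with $\|g_j'\|_{p,\M_1}\to0$ gives $\bd\bb g=0$; both pass to the completion by continuity of $\ba$ and $\bd$, and then extend from $\mathscr{F}_p$ to all of $L^p$ by density. I expect the only real subtlety — and the point where the argument genuinely uses the hypotheses $p$-regularity and $\kappa(p,\M)=+\infty$ rather than just quoting the Sobolev proof verbatim — to be checking that the convolutions $g_j$ actually land in $\S_{p,\theta,\M}^\circledast$, i.e. that convolving the invisible mollifier with a characteristic function preserves both finiteness of $\|\cdot\|_{p,\M}$ and the tameness bound; this is where one must be slightly careful that the shift $\M\rightsquigarrow\M_1$ and the $p$-triangle inequality for sums of point masses (from Section~\ref{ss:convol}) interact correctly, but since $\M_1$ inherits $p$-regularity and infinite $p$-characteristic from $\M$, and Lemma~\ref{mollifier} delivers exactly the mollifiers needed, no new difficulty arises beyond bookkeeping.
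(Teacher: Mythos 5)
Your proposal is correct and follows essentially the same route as the paper: mollify $1_{[a,b]}$ with the invisible mollifiers of Lemma~\ref{mollifier}, verify $\|g-g_j\|_p\to0$ and $\|g_j'\|_{p,\M_1}\to0$, and pass to the completion exactly as in the Sobolev case. The only cosmetic differences are that the paper applies Lemma~\ref{mollifier} to the shifted sequence $\M_1$ (so no comparison of $M_n$ with $M_{n+1}$ is needed), that the quasinorm splits as a maximum $\|f\|_{p,\M}=\max\{\|f\|_p/M_0,\ \|f'\|_{p,\M_1}\}$ rather than a $p$-sum, and that $\bb$ is accordingly only a rescaled isometry with $\|\bb g\|_{p,\M}=\|g\|_p/M_0$ — none of which affects your argument.
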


\begin{proof}
It is enough to demonstrate the result for $\theta=0$, since the general 
case then follows by inclusion.
The definition procedure remains the same as that in Section~\ref{ss:lift-b}. 
We denote by $\mathscr{F}_p$ the space of test functions, and to each 
$g\in\mathscr{F}_p$ 
we aim to associate a Cauchy sequence 
$\{g_j\}_j\subset\S_{p,0,\M}^{\circledast}$ such that
\begin{equation}\label{eq:rel-b}
\lim_{j\to+\infty}\lVert g-g_j\rVert_p=0\quad 
\lim_{j\to+\infty}\lVert g_j'\rVert_{p,\M_1}=0,
\end{equation}
where $\M_1$ is the shifted sequence $\M_1=\{M_{j+1}\}_{j\geq 0}$. 
We then declare $\bb g$ to be the abstract limit $\lim g_j$ in $W_{\M}^{p,0}$.
If this can be achieved, these properties show that $\bb$ is a rescaled 
isometry on $\mathscr{F}_p$:
Indeed, the norm splits as
\begin{equation}\label{eq:norm-id}
\lVert f\rVert_{p,\M}=\max \Bigg\{\frac{\lVert f\rVert_p}{M_0},\; 
\lVert f\rVert_{p,\M_1} \Bigg\},
\end{equation}
so we may write
\[
\lVert \bb g\rVert_{p,\M}^p=\lim_{j\to+\infty}\lVert g_j\rVert_{p,\M}^p=
\lim_{j\to+\infty}\max\Bigg\{\frac{\lVert g_j\rVert_p^p}{M_0^p},\;
\lVert g_j'\rVert_{p,\M_1}^p\Bigg\}=
\frac{\lVert g_j\rVert_p^p}{M_0^p}
\]
and they uniquely determine $\bb g$. Indeed, any other Cauchy sequence 
$\{\tilde{g}_j\}_j$ for which \eqref{eq:rel-b} holds will be equivalent 
to $\{g_j\}$ in $W_{\M}^{p,0}$:
\[
\lVert g_j-\tilde{g}_j\rVert_{p,\M}^p\leq \frac{\lVert g-g_j\rVert_p^p}{M_0}+
\frac{\lVert g-\tilde{g}_j\rVert_p^p}{M_0}+\lVert g_j'\rVert_{p,\M_1}^p+\lVert 
\tilde{g}_j'\rVert_{p,\M_1}^p
=o(1),
\]
as $j\to+\infty$. Moreover, \eqref{eq:rel-b} clearly shows that
\[
(\ba\circ\bb) g=g \quad\text{and}\quad (\bd\circ\bb)g=g,\quad 
g\in\mathscr{F}_p,
\]
and since these properties are stable under extension by continuity to $L^p$,
the proof is complete once the existence of a Cauchy sequence $\{g_j\}_j$ 
satisfying \eqref{eq:rel-b} is demonstrated.

The properties of $\M$ needed to apply Lemma~\ref{mollifier} are 
inherited by $\M_1$, 
so for each $n$ we may apply the lemma to 
$W_{\M_1}^{p,0}$ in order to produce 
functions $\Phi_n\in \S_{p,0,\M_1}^{\circledast}$ with 
support in $[0,\epsilon_n]$ such that $\|\Phi_n\|_{\M_1}<\epsilon_n$.
We set $g_j=1_{[a,b]}*\Phi_j$, so $g_j\in \S_{p,\theta,\M}^{\circledast}$.

Observe that for $n\geq 1$,
\[
(1_{[a,b]}*\Phi_j)^{(n)}=(\delta_a-\delta_b)*\Phi_j^{(n-1)}.
\]
It follows that
\begin{equation}\label{eq:moll-norm}
\lVert g_j^{(n)}\rVert_p=\nl (1_{[a,b]}*\Phi_j)^{(n)}\nr_p
\leq 2^{1/p}\lVert \Phi_j^{(n-1)}\rVert_p\leq 
2^{1/p}\epsilon_j M_n.
\end{equation}
It is clear that $g_j\to 1_{[a,b]}$ in $L^p$ (e.g. since it converges 
in $L^\infty$), 
so $\lim_j \nl g_j-g\nr_p=0$. 
It also follows from \eqref{eq:moll-norm} that 
\[
\lim_{j\to+\infty}\lVert g_j'\rVert_{p,\mathcal{M}_1}^p=
\lim_{j\to+\infty}\sup_{n\geq 1}\frac{\lVert g_j^{(j+1)}\rVert_p}{M_n}\leq 
\lim_{j\to+\infty}2^{1/p}\epsilon_j=0.
\]
That these considerations show that $\{g_j\}_j$ is a $W^{p,\theta}_\M$-Cauchy 
sequence in $\S_{p,\theta,\M}^\circledast$ follows similarly. 
We thus declare this limit to be our $\bb g$. Note that
\[
\lVert g_j\rVert_{p,\M}=\sup_{n\geq 0}\frac{\lVert g_j^{(n)}\rVert_p}{M_n}\to 
\frac{\lVert g\rVert_p}{M_0},\quad j\to\infty
\]
in light of the above calculations. Thus, $\bb$ is a densely defined 
bounded operator.
After extending $\bb$ to the entire space, we obtain a well-defined 
linear operator 
$\bb:L^p\to W_\M^{p,\theta}$ such that $\ba\bb=\id$ and $\bd\bb=0$.
\end{proof}

Recall the notation $\M_1=\{M_{k+1}\}_{k}$ and $\theta_1=\theta/(1-p)$.
\begin{prop}\label{prop-gamma}
Assume that $\M$ is $p$-regular with $\kappa(p,\M)=+\infty$. Then there exists
a continuous linear mapping $\bg: W_{\M_1}^{p,\theta_1}\to W_{\M}^{p,\theta}$ 
such that
\[
\bd\circ\bg=\operatorname{id}_{W_{\M_1}^{p,\theta_1}}\quad\text{and}
\quad\ba\circ\bg=0.
\]
\end{prop}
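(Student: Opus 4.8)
The plan is to mimic the construction of $\bg$ in Section~\ref{ss:lift-g}, replacing the invisibility lemma for $W^{k,p}$ by Lemma~\ref{mollifier}, and taking care of the fact that our test functions no longer have a uniformly bounded support. By the same inclusion argument as in Proposition~\ref{prop-beta}, it suffices to treat the case $\theta=0$ (so $\theta_1=0$ as well). Let $g\in W_{\M_1}^{p,0}$ be given as the abstract limit of a Cauchy sequence $\{g_j\}_j$ with $g_j\in\S_{p,0,\M_1}^\circledast$. For each $j$, apply Lemma~\ref{mollifier} (which is available since $\M_1$ inherits $p$-regularity and $\kappa(p,\M_1)=+\infty$ from $\M$) to produce a non-negative $\Phi_{\epsilon_j}\in\S_{p,0,\M_1}^\circledast$ with unit integral, support in $[0,\epsilon_j]$, and $\|\Phi_{\epsilon_j}\|_{p,\M_1}<\epsilon_j$, where $\epsilon_j\downarrow 0$ is chosen to decay fast enough (to be specified below). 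Set
\[
\tilde g_j(x):=g_j(x)-\langle g_j\rangle_\R\,\Phi_{\epsilon_j}(x),\qquad
\langle g_j\rangle_\R:=\int_\R g_j(t)\dd t,
\]
so that $\tilde g_j$ has vanishing zeroth moment, and define the primitive
\[
u_j(x):=\int_{-\infty}^x\tilde g_j(t)\dd t.
\]
Then $u_j'=\tilde g_j$, and $u_j$ is smooth; one must check that $u_j\in\S_{p,0,\M}^\circledast$, i.e. that $\|u_j\|_{p,\M}<+\infty$ and the $(p,0)$-tameness bound holds. For $n\ge 1$ we have $u_j^{(n)}=\tilde g_j^{(n-1)}$, so the quasinorm $\|u_j\|_{p,\M}$ reduces, via the splitting \eqref{eq:norm-id}, to $\max\{\|u_j\|_p/M_0,\|\tilde g_j\|_{p,\M_1}\}$, which is finite; tameness of $u_j$ likewise reduces to tameness of $\tilde g_j$, which holds since both $g_j$ and $\Phi_{\epsilon_j}$ are $(p,0)$-tame. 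The only genuinely new point is that $u_j$ need not be compactly supported — it is constant (equal to $\langle g_j\rangle_\R$ minus the tail of $\langle g_j\rangle_\R\Phi_{\epsilon_j}$, i.e. eventually a constant) to the right — but since $\tilde g_j$ has integral zero, $u_j$ in fact returns to $0$ outside a compact set, so $u_j\in C_0^\infty$ and hence lies comfortably in $\S_{p,0,\M}^\circledast$.

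Next, set $f_j:=u_j-\bb u_j\in W_\M^{p,0}$, using the lift $\bb$ from Proposition~\ref{prop-beta}. From $\ba\bb=\id_{L^p}$ and $\bd\bb=0$ we get, exactly as in \eqref{eq-propabgd},
\[
\ba f_j=u_j-u_j=0,\qquad \bd f_j=u_j'-0=\tilde g_j.
\]
Using that $\ba=\bp_0$ and $\bd$ together realise the isometry $\bp:W_\M^{p,0}\to\ell^\infty(L^p,\M)$ — concretely that $\|f\|_{p,\M}=\max\{\|\ba f\|_p/M_0,\|\bd f\|_{p,\M_1}\}$ by \eqref{eq:norm-id} — we obtain
\[
\|f_j-f_k\|_{p,\M}
=\max\Big\{\tfrac{\|\ba(f_j-f_k)\|_p}{M_0},\,\|\bd(f_j-f_k)\|_{p,\M_1}\Big\}
=\|\tilde g_j-\tilde g_k\|_{p,\M_1}.
\]
Now $\|\tilde g_j-\tilde g_k\|_{p,\M_1}\le \|g_j-g_k\|_{p,\M_1}^p+|\langle g_j\rangle_\R|^p\|\Phi_{\epsilon_j}\|_{p,\M_1}^p+|\langle g_k\rangle_\R|^p\|\Phi_{\epsilon_k}\|_{p,\M_1}^p$ raised to $1/p$ by the $p$-triangle inequality; the first term is small because $\{g_j\}$ is Cauchy, and the last two are controlled provided $\epsilon_j|\langle g_j\rangle_\R|\to 0$, which fixes the required rate of decay of $\epsilon_j$. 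Hence $\{f_j\}_j$ is Cauchy in $W_\M^{p,0}$, and we declare $\bg g:=\lim_j f_j$.

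It remains to check that $\bg g$ is independent of the choices made, that $\bg$ is linear and bounded, and that $\bd\bg=\id$, $\ba\bg=0$. Independence of the representing sequence and of the $\epsilon_j$ follows verbatim from the argument in Section~\ref{ss:lift-g}: if $\{F_j\}$ is a competing sequence with $\ba F_j=0$ and $\bd F_j=G_j\to g$ in $W_{\M_1}^{p,0}$, then by the isometry $\|F_j-f_j\|_{p,\M}=\|G_j-\tilde g_j\|_{p,\M_1}\to 0$, so the abstract limits coincide. Linearity is clear from linearity of the construction, and the estimate
\[
\|\bg g\|_{p,\M}^p=\lim_j\|f_j\|_{p,\M}^p=\lim_j\|\tilde g_j\|_{p,\M_1}^p\le\lim_j\big(\|g_j\|_{p,\M_1}^p+\epsilon_j^p|\langle g_j\rangle_\R|^p\big)=\|g\|_{p,\M_1}^p
\]
shows $\bg$ is a contraction $W_{\M_1}^{p,0}\to W_\M^{p,0}$. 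Finally, $\ba\bg g=\lim_j\ba f_j=0$ and $\bd\bg g=\lim_j\bd f_j=\lim_j\tilde g_j=g$ in $W_{\M_1}^{p,0}$, completing the verification. The main obstacle, and the only place where this differs from the Sobolev-space proof, is ensuring that the primitives $u_j$ land in $\S_{p,0,\M}^\circledast$: one must verify both that the quasinorm $\|u_j\|_{p,\M}$ is finite (immediate once derivatives are shifted) and, more delicately, that $(p,0)$-tameness is not destroyed by integrating — which works precisely because $\Phi_{\epsilon_j}$ is itself $(p,0)$-tame, so that the moment-correction term does not spoil the $L^\infty$-growth of the high derivatives. $\hfill\qed$
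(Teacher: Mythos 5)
Your overall skeleton (moment correction, primitive, $f_j:=u_j-\bb u_j$, isometry argument for well-definedness, density extension) is the right algebraic frame, but the central analytic step fails. You assert that since $\tilde g_j$ has integral zero, ``$u_j$ in fact returns to $0$ outside a compact set, so $u_j\in C_0^\infty$.'' This is false: the test class $\S_{p,\theta_1,\M_1}^\circledast$ consists of smooth functions with finite quasinorm and tame sup-norms, \emph{not} compactly supported functions (this is precisely the difference from the Sobolev case, where the test class was $C^k_0$). Thus $\tilde g_j$ has unbounded support, and the vanishing of $\int_\R\tilde g_j$ only gives $u_j(x)\to 0$ as $x\to+\infty$ with no rate. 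For $0<p<1$, membership of the primitive in $L^p$ requires strong decay: if, say, $g_j(x)\asymp x^{-s}$ at $+\infty$ with $1/p<s\le 1+1/p$, then $g_j\in L^p$ but the tail of its primitive is $\asymp x^{1-s}$, which is not in $L^p$; a single global moment correction by $\langle g_j\rangle_\R\Phi_{\epsilon_j}$ does nothing to improve this tail. So your $u_j$ need not lie in $\S_{p,\theta,\M}^\circledast$ at all, and the construction collapses at exactly the point the paper flags as ``the main obstacle is the need to have $p$-integrability after taking the primitive.'' The paper's proof replaces the single global correction by a \emph{local} one: it partitions $\R$ into intervals $I_n$ adapted to the oscillation of the primitive $f$ of $g$, so that $\sum_n|I_n|\,(\langle f\rangle^{\mathrm{osc}}_{I_n})^p\lesssim\epsilon$, and subtracts $\int_{-\infty}^x\phi$, where $\phi=\sum_n\lambda_n[\Phi_{n,1}-\Phi_{n,2}]$ is a series of pairs of invisible mollifiers supported near the endpoints of the $I_n$, with $\lambda_n$ the mean of $f$ on $I_n$ and quasinorms $\epsilon_n$ chosen summably small. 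Then $u=f-\int\phi$ agrees, off the small correction zones, with $f$ minus its local mean, which yields $\|u\|_p^p=\mathrm{O}(\epsilon)$, while $u'-g=-\phi$ is small in the $\M_1$-quasinorm. This local-mean correction is the genuinely new idea your proposal is missing.

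A secondary point: your reduction ``it suffices to treat $\theta=0$'' works for $\bb$ (whose domain $L^p$ does not depend on $\theta$, and $W_\M^{p,0}\subset W_\M^{p,\theta}$ contractively), but not for $\bg$: the domain $W_{\M_1}^{p,\theta_1}$ \emph{grows} with $\theta_1$, so a construction carried out only for $\theta_1=0$ does not define $\bg$ on all of $W_{\M_1}^{p,\theta_1}$. The paper accordingly works with a general $g\in\S_{p,\theta_1,\M_1}^\circledast$ throughout. The remaining ingredients of your proposal (uniqueness of the limit via the isometric splitting of the quasinorm, contractivity, and the identities $\ba\bg=0$, $\bd\bg=\id$) are fine and coincide with the paper's argument once the corrected $u$ is available.
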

\begin{proof}
It is enough to construct $\bg$ on the class $\S_{p,\theta_1,\M_1}^\circledast$ 
which by 
definition is dense in $W_{\M_1}^{p,\theta_1}$. For any 
$g\in \S_{p,\theta_1,\M_1}^\circledast$,
we will show that there exists a Cauchy sequence $\{u_j\}_j$ in 
$\S_{p,\theta,\M}^\circledast$ 
with the following properties:
\begin{equation}\label{eq:prop-g}
\lim_{j\to+\infty}\lVert u_j\rVert_p^p=0,\quad 
\lim_{j\to+\infty}\lVert u_j'-g\rVert_{p,\M_1}^p=0,
\end{equation}
and declare $\bg g$ to be the abstract limit $\lim_{j\to+\infty} u_j$.
By the norm identity \eqref{eq:norm-id} and exactly the same argument as in 
Section~\ref{ss:lift-g}, it follows that $\bg$ is well-defined, and extends 
to a continuous linear map $\bg:W_{\M_1}^{p,\theta_1}\to W_{\M}^{p,\theta}$ 
which satisfies \eqref{eq:prop-g}.

The existence of the Cauchy sequence $\{u_j\}_j$ will be established 
if we for each given $\varepsilon>0$ can supply a function 
$u=u_\varepsilon\in \S_{p,\theta,\M}^\circledast$ such that
\[
\lVert u\rVert_p^p\leq C M_0\varepsilon,\quad 
\lVert u'-g\rVert_{p,\M_1}^p\leq C\varepsilon,
\]
for some positive constant $C$, which does not depend on $\varepsilon$. 
Indeed, if $\{\varepsilon_j\}_j$ is a sequence tending to zero, then 
the corresponding sequence $\{u_{\varepsilon_j}\}_j$ satisfies 
$\lVert u_{\varepsilon_j}-u_{\varepsilon_k}\rVert_{p,\M}^p\leq 
2C\max\{\varepsilon_j,\varepsilon_k\}$ and \eqref{eq:prop-g}, 
and is consequently a Cauchy sequence with the desired properties.

We cannot copy the argument from the Sobolev space case word-by-word, 
since the integration procedure employed in Section~\ref{ss:lift-g} is 
not sure to take $h\in \S_{p,\theta_1,\M_1}^\circledast$ into an element 
of $\S_{p,0,\M}^{\circledast}$, even if $\int_\R h(x)\diff x=0$. The main obstacle 
is the need to have $p$-integrability after taking the primitive.
Instead we refine the correction procedure. 
Define $f(x)$ by
\[
f(x)=\int_{-\infty}^x g(t)\diff t,
\]
which is well defined due to the estimate
\[
\lVert g\rVert_1\leq \lVert g\rVert_{p}^p\lVert g\rVert_{\infty}^{1-p}.
\]
For $f\in C(\overline{I})$, we introduce the notation
\[
\langle f\rangle^{\mathrm{osc}}_{I}:=\sup_{x\in I}\bigg\lvert f(x)-
\frac{1}{\lvert I\rvert}\int_{I}f(t)\diff t\bigg\rvert.
\]
We proceed to define a partition of $\R$ into intervals 
$I_n:=[\eta_n,\eta_{n+1}]$ 
with $\ldots \leq \eta_{-1}\leq \eta_{0}\leq \eta_{1}\leq \ldots$, such that 
\[
\sum_{n\in\Z} \lvert I_n\rvert (\langle f\rangle^{\mathrm{osc}}_{I_n})^p
\lesssim \epsilon.
\]
To see how this can be done, observe that $f'=g$ is uniformly bounded, 
so $f$ is uniformly continuous. 
Starting with with the partition induced by $\mathbb{Z}$,
i.e. $I_{m,1}^{\,1}=[m,m+1]$, where $m\in\Z$, we aim to repeatedly cut intervals 
in half until the requirement is satisfied.
For any interval $I_{m,1}^{\,1}$, if 
\[
(\langle f\rangle^{\mathrm{osc}}_{I_{m,1}^{\,1}})^p\leq \epsilon 2^{-\lvert m\rvert}.
\]
we do nothing.
If this inequality fails, consider the dyadic children 
$\{I_{m,j}^{\,k}\}_{j=1}^{2^{k}}$ 
of generation $k$ of $I_{m,1}^{\,1}$, 
where $k=k_m$ is large enough enough for 
\[
\big(\langle f\rangle^{\mathrm{osc}}_{I_{m,j}^{\,k}}\big)^p
\leq \epsilon 2^{-\lvert m\rvert},
\quad j=1,\ldots, 2^k
\]
to hold. That such a $k_m$ exists follows from the uniform continuity of $f$.
Repeating this procedure for all $m\in\mathbb{Z}$, we end up with intervals 
$\{I_{m,j}^{k_m}\}_{m,j}$ where $m\geq 1$ and $1\leq j\leq 2^{k_m}$, such that
\[
\sum_{m,j}\lvert I_{m,j}^{\,k_m}\rvert 
\big(\langle f\rangle^{\mathrm{osc}}_{I_{m,j}^{\,k_m}}\big)^p\leq 
\sum_{m\in \Z}\epsilon 2^{-\lvert m\rvert}\sum_{j=1}^{2^{k_m}}\lvert I_{m,j}^{k_m}\rvert 
\leq 3\epsilon.
\]
We may then order the intervals and index them by the single index $n\in\Z$, 
so that the right end-points are increasing. This is our partition $I_n$.

For each $n$, we may find subintervals $J_{n,1}=[\eta_n,\eta_n+\delta_n]$ and 
$J_{n,2}=[\eta_{n+1}-\delta_n, \eta_{n+1}]$ inside $I_n$, and functions 
$\Phi_{n,1}$ and $\Phi_{n,2}$ in $\S_{p,\theta_1,\M_1}^{\circledast}$ such that 
$\Phi_{n,1}$ and $\Phi_{n,2}$ are supported on $J_{n,1}^\circ$ and $J_{n,2}^\circ$, 
respectively, with norms $\lVert \Phi_{n,1}\rVert_{p,\M}=\lVert 
\Phi_{n,1}\rVert_{p,\M}\leq \epsilon_n$, 
where these numbers are such that
\[
\sum_{n}\delta_{n}=:\epsilon.
\]
and
\[
\sum_n \epsilon_n^p\leq \epsilon.
\]
Indeed, choosing first the sequence $\{\delta_{n}\}_n$ small enough in 
$\ell^1$ to ensure the first summability property while requiring 
$\delta_n< 1$, and then choosing 
$\epsilon_n\leq \delta_{n}^{p^{-1}}$, we may then use Lemma~\ref{mollifier} 
and a simple translation to find functions $\Phi_{n,1}$ and $\Phi_{n,2}$ 
supported on the right 
intervals with the right norm bounds. Let $\phi(x)$ denote the function 
gives by
\[
\phi(x)=\sum_{j\in\mathbb{Z}}\lambda_{n}[\Phi_{n,1}(x)-\Phi_{n,2}(x)],
\]
where
\[
\lambda_{n}=\frac{1}{\lvert I_n\rvert}\int_{I_n}f(t)\diff t,\quad i=1,2.
\]
Define $u\in C^\infty(\R)$ by
\[
u(x)=f(x)-\int_{-\infty}^x \phi(t)\diff t=\int_{-\infty}^x(g(t)-\phi(t))\diff t.
\]
We claim that $u\in L^p$ with small norm, and that $u'-g$ is small in 
$W_{\M_1}^{p,\theta_1}$. Indeed,
if $x\in I_n\setminus(J_{n,1}\cup J_{n,2})$, then
\begin{align*}
\big\lvert f(x)-\int_{-\infty}^x\phi(t)\diff t\big\rvert^p&
=\big\lvert f(x)-\sum_{m<n}\lambda_m\int_{J_{m,1}\cap J_{m,2}}
\left(\Phi_{m,1}(t)-\Phi_{m,2}(t)\right)\diff t-
\lambda_n\int_{J_{n,1}}\Phi_{m,1}(t)\diff t\big\rvert^p
\\
&=\big\lvert f(x)-\lambda_n\int_{J_{n,1}}\Phi_{m,1}(t)\diff t\big\rvert^p
=\big\lvert  f(x)-\frac{1}{\lvert I_n\rvert}\int_{I_n}f(t)\diff t\big\rvert^p ,
\end{align*}
and similarly if $x\in J_{n,i}$, then 
\[
\big\lvert f(x)-\int_{-\infty}^x\phi(t)\diff t\bigg\rvert^p\leq 
\big(\lvert f(x)\rvert + \lambda_n\Big 
\lvert \int_{\eta_n}^x(\Phi_{n,1}(t)-\Phi_{n,2}(t))\diff t\Big\rvert\bigg)^p
\leq 2^p\lVert f\rVert_\infty^p,
\]
which holds since the mean of $f$ on any subinterval is bounded 
by the essential supremum. It follows that
\begin{align*}
\lVert u\rVert_p^p\leq 
\sum_{n\in\mathbb{Z}}\left(2^{1+p}\lVert f\rVert_\infty\delta_n + 
\lvert I_n\rvert \sup_{x\in I_n}\big\lvert f(x)
-\frac{1}{\lvert I_n\rvert}\int_{I_n}f(t)\diff t\big\rvert^p\right) 
\\ 
\leq 2^{1+p}\lVert f\rVert_\infty^p\sum_{n\in\mathbb{Z}}\delta_n +
\epsilon\leq \left(2^{1+p}\lVert f\rVert_\infty^p+1\right)\epsilon.
\end{align*}
This proves that $\lVert u\rVert_p^p=O(\epsilon)$.

Turning to derivatives, it is clear that
\[
g-u'=g-(g-\phi)=\phi.
\]
Thus,
\[
\lVert u'-g\rVert_{p,\M_1}^p=2\sum_{n\in\mathbb{Z}}\lVert 
\Phi_{\epsilon_n}\rVert_p^p\leq 
2\sum_{n\in\mathbb{Z}} \epsilon_n^p\leq 2\epsilon.
\]
It thus follows that also $\lVert u'-g\rVert_p^p=O(\epsilon)$, 
and the assertion follows.
\end{proof}

\begin{proof}[Sketch of proof of Theorem~\ref{thm-uncoupling}] 
In principle, we just follow the algebraic approach from the proof of
Theorem \ref{Peetre} supplied in Subsection \ref{subsec-isomconstr}. 
So, we define a linear operator 
$\Aop:\,W^{p,\theta}_\M\to L^p\oplus W^{p,\theta_1}_{\M_1}$ given by 
$\Aop f:=(\ba f,\bd f)$ and analyze it using the properties of the maps $\ba,
\bb, \bd, \bg$. 
\end{proof}
\subsection{A remark on the size of $W_\M^{p,\theta}$.}
We conclude this section with a proposition which illustrates the 
size of $W_\M^{p,\theta}$ when $\kappa(p,\M)=+\infty$.

\begin{prop}
\label{prop:inclusion-tail-decay} 
In the setting of Theorem~\ref{thm-uncoupling}, we have the inclusions
\[
c_0(L^p,\M)
\subset \bp W_\M^{p,\theta} \subset 
\ell^\infty(L^p,\M)
\]
\end{prop}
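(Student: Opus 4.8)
The plan is to establish the two inclusions in Proposition~\ref{prop:inclusion-tail-decay} separately, the right-hand one being essentially immediate and the left-hand one requiring an explicit construction. For the inclusion $\bp W_\M^{p,\theta}\subset\ell^\infty(L^p,\M)$: recall from the discussion preceding Theorem~\ref{thm-smoothmap} that $\bp$ is an isometry of $W_\M^{p,\theta}$ into $\ell^\infty(L^p,\M)$, so every element of $\bp W_\M^{p,\theta}$ automatically lies in $\ell^\infty(L^p,\M)$ with equal quasinorm. Nothing more is needed here.

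For the harder inclusion $c_0(L^p,\M)\subset\bp W_\M^{p,\theta}$, I would argue as follows. First reduce to a convenient dense subset: the sequences $(f_0,0,0,\ldots)$ with $f_0$ a step function, together with the shifts of such sequences by the derivative operator, span (in an appropriate sense) enough of $c_0(L^p,\M)$ to conclude by a density-and-completeness argument. More concretely, given a target sequence $(h_0,h_1,h_2,\ldots)\in c_0(L^p,\M)$, the idea is to build its $\bp$-preimage by summing the lifts produced in Propositions~\ref{prop-beta} and~\ref{prop-gamma}. Using the map $\bb:L^p\to W_\M^{p,\theta}$, the element $\bb h_0$ has $\bp(\bb h_0)=(h_0,0,0,\ldots)$ since $\ba\bb=\id$ and $\bd\bb=0$; then one passes to the shifted space $W_{\M_1}^{p,\theta_1}$ and peels off $h_1$ via $\bg$ applied to $\bb_1 h_1$ (the lift $\bb_1$ for $\M_1$), and iterates. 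The decay condition $\|h_n\|_p/M_n\to0$ is exactly what guarantees that the resulting series $\sum_n \bg\,\bg_1\cdots\bg_{n-1}\,\bb_n h_n$ (composing the appropriate $\bg$'s to climb back up from $W_{\M_n}^{p,\theta_n}$ to $W_\M^{p,\theta}$) converges in the quasinorm of $W_\M^{p,\theta}$: each $\bg$ is a contraction (or at worst bounded by an absolute constant), $\bb_n$ is a rescaled isometry with norm $\sim M_n^{-1}$ on the relevant component, and so the $n$-th term has $W_\M^{p,\theta}$-quasinorm comparable to $\|h_n\|_p/M_n$, which tends to $0$; summing with the $p$-triangle inequality and the fact that the tail contributes negligibly gives a well-defined element $u\in W_\M^{p,\theta}$ with $\bp u=(h_0,h_1,h_2,\ldots)$ by construction.

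The main obstacle I anticipate is bookkeeping the iterated lifts cleanly: one must track how $\theta$ transforms under each shift ($\theta_k=\theta(1-p)^{-k}$) and verify that $\M_k$ continues to satisfy the $p$-regularity hypotheses needed to invoke Lemma~\ref{mollifier} and Propositions~\ref{prop-beta}--\ref{prop-gamma} at each stage — but this is precisely the content of the last sentence of Theorem~\ref{thm-uncoupling} ($\M_1$ inherits the properties of $\M$), applied inductively. A secondary subtlety is that the operator norms of the composed lifts $\bg\circ\bg_1\circ\cdots$ could in principle accumulate; one should check they stay uniformly bounded (each $\bg$ being a contraction, the composition is too, so this is fine) so that the decay of $\|h_n\|_p/M_n$ alone drives convergence. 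Once the preimage $u$ is constructed and $\bp u=(h_n)_n$ verified coordinatewise, the inclusion follows, completing the proof.
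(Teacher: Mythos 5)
Your construction is essentially the paper's: the paper also obtains, for each $k$, an element $F_k\in W_\M^{p,\theta}$ with $\bp F_k=(h_0,\ldots,h_k,0,0,\ldots)$ (there by iterating Theorem~\ref{thm-uncoupling}, which is exactly what your composed lifts $\bb$, $\bg$, $\bb_1,\ldots$ implement), and then passes to the limit using the $c_0$ decay, completeness of $W_\M^{p,\theta}$, and the isometry of $\bp$. The second inclusion is handled identically.

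One step in your write-up would fail as stated, however: you justify convergence of the series $\sum_n \bg\,\bg_1\cdots\bg_{n-1}\,\bb_n h_n$ by ``summing with the $p$-triangle inequality'' from the fact that the $n$-th term has quasinorm $\approx \lVert h_n\rVert_p/M_n\to0$. In a quasi-Banach space, terms tending to zero do not make a series converge, and the $p$-triangle inequality would require $\sum_n\bigl(\lVert h_n\rVert_p/M_n\bigr)^p<+\infty$, which is strictly stronger than the $c_0(L^p,\M)$ hypothesis and not available. The correct (and available) mechanism is the one the paper uses: since $\bp$ is an isometry into $\ell^\infty(L^p,\M)$ and the $\bp$-image of the block $\sum_{n=j+1}^{k}$ of your series is supported only in coordinates $j+1,\ldots,k$, its quasinorm is the supremum $\sup_{j+1\le l\le k}\lVert h_l\rVert_p/M_l$, not a sum; this tends to $0$ as $j\to+\infty$ precisely because the sequence lies in $c_0(L^p,\M)$, so the partial sums are Cauchy and the limit $u$ satisfies $\bp u=(h_0,h_1,\ldots)$ by the same sup estimate on the tails. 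With that substitution (which your phrase ``the tail contributes negligibly'' gestures at but does not carry), your argument is complete; the bookkeeping of $\theta_k=\theta(1-p)^{-k}$ and of the inherited $p$-regularity of $\M_k$ is indeed covered by the last assertion of Theorem~\ref{thm-uncoupling}, as you say.
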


\begin{proof}
The second inclusion is clear. To prove the first, let
$(f_0, f_1, f_2,\ldots)$ be an arbitrary element of $c_0(L^p,\M)$. 
By iterating Theorem~\ref{thm-uncoupling}, for each fixed 
$k$ there exists an element $F_k\in W_{\M}^{p,\theta}$ such that 
$\bp(F_k)=(f_1,\cdots, f_k,0,0,\ldots)$. 
We claim that $\lim F_j$ is an element of $W_{k,p}$, and that 
$\bp(\lim F_j)=(f_0,f_1,\ldots)$.
Indeed, if $k>j$, then
\[
\lVert F_j-F_k\rVert_{p,\M}=\sup_{j+1\leq l\leq k}
\frac{\lVert f_l\rVert_p}{M_l}=o(1),\quad \text{as\;}j\to\infty,
\]
since $(f_j)_j\in c_0(L^p,\M)$.
Thus $\{F_j\}$ is a Cauchy sequence in $W_\M^{p,\theta}$, and has 
an abstract limit.
Moreover,
\[
\bp(F_j)-(f_0,f_1,\ldots)=(0,\ldots,0, f_{j+1}, f_{j+2},\ldots),
\]
so it follows immediately that
\[
\lVert \bp(F_j)-(f_0,f_1,\ldots)\rVert_{c_0(L^p,\M)}
=\sup_{l\geq j+1}\frac{\lVert f_l\rVert_p}{M_l}
=o(1),\quad \text{as\;}j\to\infty.
\]
This proves our assertion.
\end{proof}

\section{The quasianalyticity transition and the 
parameter $\theta$}
\label{sec-carleman}

\subsection{A remark on the case $1\le p<+\infty$}

It is of course a basic question is if the Denjoy-Carleman theorem 
remains valid in the setting of the 
$L^p$-Carleman Classes in the parameter range $1\leq p<+\infty$ (without
any tameness requirement of course). This is of course true and should be
well-known, but we have unfortunately not been able to find a suitable 
references for this fact. For this reason, we supply a short self-contained
presentation.
We begin with the following lemma.

\begin{lem}\label{lem:algebra}
If $\M$ is logarithmically convex, the Carleman class $\calC_\M$ is an algebra.
\end{lem}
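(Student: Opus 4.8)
The plan is to show that if $f, g \in \mathcal{C}_\M$, then $fg \in \mathcal{C}_\M$, the other algebra axioms being immediate since $\mathcal{C}_\M$ is already a linear space. First I would write down the Leibniz rule $(fg)^{(n)} = \sum_{k=0}^{n}\binom{n}{k} f^{(k)} g^{(n-k)}$, so that
\[
\|(fg)^{(n)}\|_\infty \le \sum_{k=0}^{n}\binom{n}{k}\|f^{(k)}\|_\infty\|g^{(n-k)}\|_\infty.
\]
By hypothesis there are constants $C_f, A_f, C_g, A_g$ with $\|f^{(k)}\|_\infty \le C_f A_f^k M_k$ and $\|g^{(k)}\|_\infty \le C_g A_g^k M_k$ for all $k$. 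Setting $A := \max\{A_f, A_g\}$ and $C := C_f C_g$, we get
\[
\|(fg)^{(n)}\|_\infty \le C A^n \sum_{k=0}^{n}\binom{n}{k} M_k M_{n-k}.
\]
So the whole matter reduces to the elementary inequality $\sum_{k=0}^n \binom{n}{k} M_k M_{n-k} \le 2^n M_n$, after which we conclude $\|(fg)^{(n)}\|_\infty \le C (2A)^n M_n$, which is exactly the defining bound for membership in $\mathcal{C}_\M$.

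The key step, then, is the pointwise bound $M_k M_{n-k} \le M_0 M_n$ for $0 \le k \le n$, which is precisely where log-convexity enters. Here I would use that $j \mapsto \log M_j$ convex implies $\log M_k + \log M_{n-k} \le \log M_0 + \log M_n$: indeed, writing $k = \lambda \cdot 0 + (1-\lambda) n$ and $n - k = (1-\lambda)\cdot 0 + \lambda n$ with $\lambda = (n-k)/n$, convexity gives $\log M_k \le \lambda \log M_0 + (1-\lambda)\log M_n$ and $\log M_{n-k} \le (1-\lambda)\log M_0 + \lambda \log M_n$, and adding these yields the claim. (One should normalize or simply absorb the harmless factor $M_0$ into the constant $C$; alternatively, if it is convenient later, one assumes $M_0 = 1$.) Summing over $k$ and using $\sum_{k=0}^n \binom{n}{k} = 2^n$ gives $\sum_{k=0}^n \binom{n}{k} M_k M_{n-k} \le M_0 M_n 2^n$.

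I do not anticipate a genuine obstacle here; the only thing to be careful about is the bookkeeping of the constants $C$ and $A$ (and the stray $M_0$) to make sure the final estimate really is of the form $\|(fg)^{(n)}\|_\infty \le C' (A')^n M_n$ with $C', A'$ independent of $n$, so that $fg$ lands back in $\mathcal{C}_\M$. One might also remark that the same computation works verbatim with the $L^p$-quasinorm in place of the $L^\infty$-norm for $1 \le p \le \infty$ (using the triangle inequality for $\|\cdot\|_p$ and the pointwise product bound $\|f^{(k)} g^{(n-k)}\|_p \le \|f^{(k)}\|_\infty \|g^{(n-k)}\|_p$, say), which is presumably why this lemma is placed where it is; but for the statement as given, the $L^\infty$ argument above suffices.
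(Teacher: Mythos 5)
Your proof is correct and follows essentially the same route as the paper: Leibniz's rule combined with the log-convexity estimate $M_kM_{n-k}\le M_0M_n$. The only cosmetic differences are that you obtain this inequality by direct convex interpolation of $j\mapsto\log M_j$ (the paper telescopes the increasing ratios $M_k/M_{k-1}$) and that you bound the binomial sum by $2^n\max\{A_f,A_g\}^n$ where the paper uses the binomial theorem to get $(A_f+A_g)^n$; both yield the required bound $C'(A')^nM_n$.
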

\begin{proof}
We may without loss of generality assume that $M_0=1$. Indeed, 
the classes are invariant under scaling of the sequence with the 
positive number $M_0^{-1}$. Under this assumption, together with
logarithmic convexity of $\M$, it follows that
\[
M_{j}M_{n-j}\leq M_n,\quad 0\leq j\leq n.
\]
Indeed, if $j\leq n-j$, since $M_0=1$,
\[
M_jM_{n-j}=M_{0}\left(\frac{M_1}{M_0}\cdots\frac{M_j}{M_{j-1}}\right)
\left(\frac{M_{n-j}}{M_{n-j+1}}
\cdots\frac{M_{n-1}}{M_n}\right)M_n.
\]
We next observe that
\[
\frac{M_{k}}{M_{k-1}}\leq\frac{M_{k+1}}{M_{k}}, k\geq 1,
\]
so we may estimate the product
\[
\left(\frac{M_1}{M_0}\cdots\frac{M_j}{M_{j-1}}\right)
\left(\frac{M_{n-j}}{M_{n-j+1}}
\cdots\frac{M_{n-1}}{M_n}\right)\leq 
\left(\frac{M_{n-j+1}}{M_{n-j}}\cdots\frac{M_{n}}{M_{n-1}}\right)
\left(\frac{M_{n-j}}{M_{n-j+1}}\cdots\frac{M_{n-1}}{M_n}\right)= 1.
\]
The claim now follows.

Next, let $f,g\in \calC_\M$, and suppose that  the estimates
\[
\lVert f^{(n)}\rVert_\infty\leq A_f^n M_n\quad \text{and}\quad
\lVert g^{(n)}\rVert_\infty\leq A_g^n M_n,
\]
hold for $n=0,1,2,\ldots$, where $A_f$ and $A_g$ are appropriate positive 
constants. Then, by the Leibniz rule, we have that
\[
\lVert (fg)^{(n)}\rVert_\infty \leq \sum_{j=0}^n
\binom{n}{j}\lVert f^{(j)}\rVert_\infty
\lVert g^{(n-j)}\rVert_\infty\leq\sum_{j=0}^n
\binom{n}{j} A_f^j A_g^{n-j}M_jM_{n-j}\leq (A_f+A_g)^n M_n,
\]
which proves that $fg\in \calC_\M$. By linearity, the conclusion extends
to all $f,g\in \calC_\M$.
\end{proof}

\begin{thm}\label{thm:p-ge-1}
When $1\le p<+\infty$ the $p$-Carleman class $\calC_\M^p$ 
is quasianalytic if and only if $\calC_\M$ is.
\end{thm}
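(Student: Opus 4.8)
The plan is to sandwich $\calC_\M^p$ between two ordinary ($L^\infty$) Carleman classes and then invoke the classical Denjoy--Carleman theorem. We may assume throughout that $\M$ is logarithmically convex; otherwise replace it by its largest logarithmically convex minorant, which affects neither $\calC_\M$ nor $\calC_\M^p$ (the standard Landau--Kolmogorov comparison argument carries over to $1\le p<+\infty$, see below). Note also that for $1\le p<+\infty$ a function satisfying $\|f^{(k)}\|_p\le CA^kM_k$ for all $k$ automatically lies in $C^\infty(\R)$ by one-dimensional Sobolev embedding, so quasianalyticity of $\calC_\M^p$ is the usual unique-continuation statement.

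For the ``if'' direction I would first establish the one-sided Gagliardo--Nirenberg inequality on the line: if $g,g'\in L^p(\R)$ with $1\le p<+\infty$, then
\[
\|g\|_\infty\le p^{1/p}\,\|g\|_p^{1-1/p}\,\|g'\|_p^{1/p},
\]
which follows from $|g(x)|^p=\int_{-\infty}^x(|g|^p)'(t)\,\diff t$ together with H\"older's inequality with exponents $p/(p-1)$ and $p$ (and the fact that $g$ vanishes at $\pm\infty$). Applying this with $g=f^{(k)}$ and using $\|f^{(k)}\|_p\le CA^kM_k$, $\|f^{(k+1)}\|_p\le CA^{k+1}M_{k+1}$ yields $\|f^{(k)}\|_\infty\le C'(A')^kL_k$ with $L_k:=M_k^{1-1/p}M_{k+1}^{1/p}$, so $\calC_\M^p\subset\calC_{\mathcal L}$, where $\mathcal L=\{L_k\}_k$. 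The sequence $\mathcal L$ is again logarithmically convex (its logarithm is a fixed convex combination, in the index, of $\log M_k$ and the shifted sequence $\log M_{k+1}$), and since $k\mapsto M_k/M_{k+1}$ is non-increasing for log-convex $\M$,
\[
\frac{L_k}{L_{k+1}}=\Big(\frac{M_k}{M_{k+1}}\Big)^{1-1/p}\Big(\frac{M_{k+1}}{M_{k+2}}\Big)^{1/p}\ge\frac{M_{k+1}}{M_{k+2}},
\]
hence $\sum_k L_k/L_{k+1}\ge\sum_{k\ge1}M_k/M_{k+1}$. Therefore, if $\calC_\M$ is quasianalytic, i.e. $\sum_k M_k/M_{k+1}=+\infty$, then $\sum_k L_k/L_{k+1}=+\infty$, so $\calC_{\mathcal L}$ is quasianalytic by Denjoy--Carleman, and a fortiori so is its subclass $\calC_\M^p$.

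For the converse I would argue by contraposition. If $\calC_\M$ is not quasianalytic then $\sum_k M_k/M_{k+1}<+\infty$, and the classical non-quasianalytic construction in the Denjoy--Carleman theorem (see e.g. \cite{HORM1}; this is the Fourier-analytic counterpart of the iterated-convolution construction of Section~\ref{ss:convol}) provides a nonzero $\phi\in C^\infty(\R)$ with compact support and $\|\phi^{(k)}\|_\infty\le CA^kM_k$ for all $k$. Since $\phi$ is compactly supported, $\|\phi^{(k)}\|_p\le|\supp\phi|^{1/p}\|\phi^{(k)}\|_\infty\le C''A^kM_k$, so $\phi\in\calC_\M^p$; being nonzero with compact support, $\phi$ vanishes to infinite order off its support, which shows $\calC_\M^p$ is not quasianalytic.

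The interpolation inequality and the log-convexity/monotonicity bookkeeping for $\mathcal L$ are routine; the one point that genuinely needs care is the reduction to the logarithmically convex case when $\M$ is not assumed log-convex, where one must check that the Landau--Kolmogorov inequalities used to prove $\calC_\M=\calC_{\wtM}$ admit $L^p$-analogues for $1\le p<+\infty$ --- and they do, again through the one-dimensional Gagliardo--Nirenberg inequalities. Alternatively, since Theorem~\ref{thm-quasianalytic} and the surrounding discussion only ever use log-convex $\M$, one may simply impose that hypothesis here as well.
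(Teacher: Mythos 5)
Your proof is correct, but it takes a genuinely different route from the paper's in the direction ``$\calC_\M$ quasianalytic $\Rightarrow$ $\calC_\M^p$ quasianalytic''. The paper argues by contraposition on that side: given $f\in\calC_\M^p$ vanishing to infinite order at $0$ but not at $\tfrac12$, it bounds $\|f^{(n)}\|_{L^\infty[0,1]}\le M_{n+1}$ using only $f^{(n)}(x)=\int_0^x f^{(n+1)}(t)\,\diff t$ and H\"older on the bounded interval (so only $L^p([0,1])\subset L^1([0,1])$ is needed), and then invokes the algebra argument of Lemma~\ref{lem:algebra} to pass to $g(x)=f(x)f(1-x)$, producing a nontrivial compactly supported element of $\calC_{\M_1}$, whence $\calC_\M$ is non-quasianalytic by shift-invariance. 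You instead prove a global inclusion $\calC_\M^p\subset\calC_{\mathcal L}$ with $L_k=M_k^{1-1/p}M_{k+1}^{1/p}$ via the one-dimensional Gagliardo--Nirenberg inequality, verify that $\mathcal L$ is logarithmically convex with $\sum_k L_k/L_{k+1}\ge\sum_k M_{k+1}/M_{k+2}$, and quote Denjoy--Carleman directly; this avoids the algebra lemma and the product trick altogether, works directly on $\R$, and yields the slightly finer intermediate weight $\mathcal L$ (a genuine interpolation between $\M$ and the shift $\M_1$) where the paper effectively lands in $\calC_{\M_1}$. The other direction --- a compactly supported function in a non-quasianalytic $\calC_\M$ automatically lies in $\calC_\M^p$ because $\|\phi^{(n)}\|_p\le|\supp\phi|^{1/p}\|\phi^{(n)}\|_\infty$ --- is the same argument as in the paper. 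Your closing caveat about non-log-convex $\M$ is harmless: logarithmic convexity is a standing assumption of the paper (and its own proof uses it both in the Denjoy--Carleman criterion and in Lemma~\ref{lem:algebra}), so imposing it, as you suggest as the alternative, is exactly what the paper does.
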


\begin{proof}
We obtain the contrapositive statements. Assume that $\calC_\M$ is not 
quasianalytic. 
Then there exists functions $C_{\M}$ with compact support. 
Indeed, by the Denjoy-Carleman theorem it follows that
\[
\sum_{n\geq 1}\frac{M_{n-1}}{M_n}<+\infty,
\]
so $\Phi=\Phi_{1,\infty}$ constructed according to \eqref{eq:convol-def}, 
with $a_{n}=\frac{M_{n-2}}{M_{n-1}}$ and $a_1=\frac{1}{M_0}$ lies in the class
$\calC_\M$. As a consequence of the estimate
\[
\lVert \Phi^{(n)}\rVert_p\leq 
\lvert \supp \Phi\rvert ^{1/p}\lVert \Phi^{(n)}\rVert_\infty\leq 
\lvert \supp \Phi\rvert^{1/p}A_\Phi^nM_n,
\]
where $|E|$ denotes the length of the set $E\subset \R$, 
we find that $\Phi\in \calC_\M^p$ as well. In particular, the class 
$\calC_\M^p$ cannot be quasianalytic.

As for the other direction, we assume that $\calC_\M^p$ is not quasianalytic. 
Then there exists a function $f\in \calC_\M^p$ which vanishes to 
infinite degree at a point $a$ but is nontrivial at some other point $b$
( so that $f(b)\neq 0$). In view of translation as well dilatation invariance,
we may assume that $a=0$ and $b=\frac12$.
We now estimate the supremum norm on the unit interval $I=[0,1]$:
\[
\lvert f^{(n)}(x)\rvert=\lvert f^{(n)}(x)-f^{(n)}(a)\rvert \leq 
\| f^{(n+1)}\vert_{I}\|_1\leq \|f^{(n+1)}\vert_{I}\|_p\leq M_{n+1}, 
\]
using H{\"o}lder's inequality. We next form $g(x)=f(x)f(1-x)$. It turns out
that it follows from the argument used for the proof of 
Lemma~\ref{lem:algebra} that
\[
\lVert g^{(n)}\rVert_\infty\leq 2^nM_{n+1}.
\]
In particular, we have that $g\in\calC_{\M_1}$, where $\M_1=\{M_{n+1}\}_n$ is the
shifted weight sequence, so that $C_{\M_1}$ is not quasianalytic. 
Since a simple weight shift does not affect whether or not a Carleman class is 
quasianalytic, it follows that $\calC_\M$ is nonquasianalytic as well.
\end{proof}


\subsection{Quasianalytic classes for $0<p<1$}\label{quasi-analytic}
 
Theorem~\ref{thm-quasianalytic} asserts that, for decay-regular sequences 
(see definition~\ref{def:reg-2}), $\calC_\M^{p,0}$ is quasianalytic if and 
only if $\calC_{\wtM}$ is. Here, the numbers $\twtM_{n}$ are the bounds that 
appeared in controlling $\|f^{(n)}\|_\infty$ by $\| f\|_\M$ where 
$f\in \S_{p,0,\M}^\circledast$, i.e.
\begin{equation}\label{eq:Ndef}
N_n=\prod_{j\geq 1}M_{n+j}^{p(1-p)^{j-1}}
\end{equation}
We denote by $\twtM_{n,\theta}$, the related numbers
\[
\twtM_{n,\theta}=\e^{\theta(1-p)^{-n}}\prod_{j=1}^{{+\infty}}M_{n+j}^{(1-p)^{j-1}p}.
\]

\begin{proof}[Proof of Theorem~\ref{thm-quasianalytic}] 
We begin with the assertion (i). From the logarithmic convexity of $\M$, 
we know  that $\wtM_\theta$ is also logarithmically convex. Indeed, 
\[
\frac{\twtM_{n-1,\theta}}{\twtM_{n,\theta}}
=\e^{-p\theta(1-p)^{-n}}\prod_{j\geq1}
\left(\frac{M_{n-1+j}}{M_{n+j}}\right)^{(1-p)^{j-1}p},
\]
and since
\[
\prod_{j\geq1}
\left(\frac{M_{n+j}}{M_{n+1+j}}\right)^{(1-p)^{j-1}p}
\leq \prod_{j\geq1}
\left(\frac{M_{n-1+j}}{M_{n+j}}\right)^{(1-p)^{j-1}p}
\]
it follows that
\[
\frac{\twtM_{n,\theta}}{\twtM_{n+1,\theta}}\leq \e^{-p^2(1-p)^{-n-1}}
\frac{\twtM_{n-1,\theta}}{\twtM_{n,\theta}}.
\]
Since the factor $\e^{-p^2(1-p)^{-n-1}}<1$ is of order $o(1)$ as $n\to+\infty$, 
we see in particular that for some $\rho<1$,
\[
\frac{\twtM_{n,\theta}}{\twtM_{n+1,\theta}}\leq 
\rho\frac{\twtM_{n-1,\theta}}{\twtM_{n,\theta}}.
\]
If we set $a_1=1/\twtM_1$, and $a_n=\twtM_{n-1}/\twtM_{n}$ for $n\geq 2$, 
and define $\Phi$ as in Section~\ref{ss:convol} by
\[
\Phi=\lim_{j\to+\infty} a_1^{-1} 1_{[0,a_1]}*\ldots a_j^{-1} 1_{[0,a_j]},
\]
then $\Phi$ is a compactly supported function, since 
$\{a_j\}_{j}\in \ell_1$.
By the estimate \eqref{eq-elest1.004}, we find that
\[
\lVert \Phi^{(n)}\rVert_p^p\leq \frac{2^n}{1-\rho}\frac{a_{n+1}}{a_1\cdots a_{n+1}}
=\frac{2^n}{1-\rho}M_n,
\]
so it follows that the required norm-estimate holds true.
Moreover, by \eqref{eq-elest1.002}, we see that
\begin{multline}
\label{eq-tameness-theta}
(1-p)^n\log\| \Phi^{(n)}\|_\infty =(1-p)^n\theta(1-p)^n + 
(1-p)^n\sum_{j=1}^{+\infty}(1-p)^{j}\log M_{n+j}
\\
=\theta+\sum_{j>n}(1-p)^j\log M_j=\theta+\mathrm{o}(1)
\end{multline}
as $n\to+\infty$, where the last step follows since the sum is the 
tail sum of the series which  converges to $\kappa(p,\M)<\infty$. 
It follows that $\Phi\in\calC_{\M}^{p,\theta}$, so this cannot be a 
quasianalytic class.

We continue with assertion (ii). 
We derive the contrapositive statement. If $\calC_{\M}^{p,0}$ is not 
quasianalytic, then there exists a nontrivial  $f\in \calC_\M^{p,0}$ such 
that $f^{(n)}(a)=0$ for all $n\geq0$ at some point $a\in \R$. 
The inequality \eqref{eq-basicineq1.9}, which in terms of the associated weight 
$\wtM$ states that
\[
\lVert f^{(n)}\rVert_\infty\leq N_{n,0} \lVert f\rVert_{p,\M},
\]
now shows that $f\in C_{\wtM}$, so that $C_{\wtM}$ cannot be a quasianalytic 
class.

Finally, as for the remaining implication, we assume that $C_{\wtM}$ is 
non-quasianalytic. We single 
out the two cases when $\sum_n M_{n-1}/M_n=+\infty$ 
and when $\sum_n M_{n-1}/M_n<{+\infty}$. In the latter case, there are 
nontrivial compactly supported functions in $\calC_\M$. 
Take one such function $f$, and set $B:=\lvert \supp f\rvert$. We now claim 
that $f\in \calC_\M^{p,0}$. First note that since $\kappa(p,\M)$ is finite,
\[
(1-p)^{n}\log \lVert f^{(n)}\rVert_\infty\leq (1-p)^n\log (C_f A_f^n M_n)=
\mathrm{o}(1),
\] 
so $f$ is $(p,0)$-tame. Next, it is clear that
\[
\lVert f^{(n)}\rVert_p\leq B^{1/p}C_f A_f^nM_n,
\]
since $\lVert f^{(n)}\rVert_\infty\leq C_f A_f^{n}M_n$ and $f$ has compact 
support. 

Hence we may assume that $\sum_n M_{n-1}/M_n=+\infty$. In that case, our 
regularity assertion tells us that
\[
M_{n+1}/M_n\leq \epsilon^{-n},
\]
for some $\epsilon>0$.
As before, we set $a_1=1/N_0$ and $a_n=N_{n-2}/N_{n-1}$ for $n\geq 2$,
where $\wtM=\{\twtM_n\}_n$ is again defined by \eqref{eq:Ndef}.
It follows that
\[
a_{n}^{-1}=\frac{\twtM_{n-1}}{\twtM_{n-2}}
=\prod_{j\geq1}\left(\frac{M_{n-1+j}}{M_{n-2+j}}\right)^{p(1-p)^{j-1}}
\leq \prod_{j\geq1}\epsilon^{-(j+n)pq^{j-1}}\leq C\epsilon^{-n},
\]
for some constant $C<+\infty$. Then, if $R$ is large enough, it follows that 
$R^na_n\geq \lVert \{a_j\}\rVert_{\ell^1}$,
and thus that $\sum_{j>n} a_j \leq R^na_n$.
By the same methods as in case (1), it follows that there exists a function 
$\Phi\in \calC_\M^{p,0}$ with compact support, which 
thus cannot be a quasianalytic class.
\end{proof}

We conclude with our last remaining theorem.

\begin{proof}[Proof of Theorem~\ref{theta-thm}]
We are given an increasing sequence $\M$ such that $\kappa(p,\M)<+\infty$, 
and numbers $0\leq \theta<\theta'$. First note that the tameness requirement 
is stable 
under the passage from $\S_{p,\theta,\M}^{\circledast}$ to $W^{p,\theta}_\M$, 
when $\M$ is bounded away from zero: Let $f\in W^{p,\theta}_\M$. 
Taking logarithms in \eqref{eq-basicineq1.9}, we have the estimate
\[
\log \lVert f^{(n)}\rVert_{\infty}\leq\theta(1-p)^{-n}+ \log \lVert f\rVert_{p,\M} 
+ p(1-p)^{-n-1}\kappa(p,\M)-\sum_{j=1}^n(1-p)^{j-n-1}p\log M_{j},
\]
so
\begin{multline}\label{eq:tame-conv}
(1-p)^n\log \lVert f^{(n)}\rVert_\infty\leq \theta+(1-p)^n\log 
\lVert f\rVert_{p,\M} 
\\+p(1-p)^{-n-1}\kappa(p,\M)-p(1-p)^{-n-1}\sum_{j=1}^n(1-p)^j\log M_j
\end{multline}
By Remark~\ref{rem:001} (b), the partial sums in \eqref{eq:tame-conv} 
converge to $\kappa(p,\M)$ as $n\to+\infty$, which shows that
\[
\limsup_{n\to+\infty} (1-p)^n\log \lVert f^{(n)}\rVert_{p,\M}\leq \theta.
\]
Hence, in order to show that the containment $W_\M^{p,\theta'}\supset 
W_\M^{p,\theta}$ is strict, it suffices to exhibit a function 
$f\in W_\M^{p,\theta'}$ such that \eqref{eq-tameness} fails.
To this end, we consider the function 
$\Phi=\Phi_{\theta'}\in \S_{p,\theta',\M}^\circledast$ 
which was constructed in the proof of Theorem~\ref{thm-quasianalytic}, 
part (i), which is in violation of quasianalyticity. 
According to \eqref{eq-tameness-theta}, we have that
$(1-p)^n\log \lVert \Phi^{(n)}\rVert_\infty\to \theta'$ as $n\to+\infty$, 
which completes the proof.
\end{proof}

\section{Concluding remarks}
\label{sec-remarks}
A number of questions remain to be investigated. For instance, we would 
like to better understand the space $W_\M^{p,\theta}$ in the uncoupled regime. 
Note that when the $p$-characteristic $\kappa(p,\M)$ is infinite, we have 
a strict inclusion $W_\M^{p,\theta}\subset W_\M^{p,\theta'}$, $W_\M^{p,\theta}\ne 
W_\M^{p,\theta'}$ 
when $\theta<\theta'$ (See Proposition~\ref{theta-thm}). When 
$\kappa(p,\M)=+\infty$ we do not know whether this is the case. 
It is conceivable that these spaces are so large that the $\theta$-dependence 
is lost. 
We note that Proposition~\ref{prop:inclusion-tail-decay}, gives the inclusions
\[
c_0(L^p,\M)\subset \bp W_\M^{p,\theta}\subset \ell^\infty(L^p,\M).
\]

Another issue is whether we can drop some of the regularity assumptions
in Theorems~\ref{thm-uncoupling} and \ref{thm-quasianalytic}. 

In addition, it appears to be of interest to study the case when the maximal 
class $\S_{p,\theta,\M}^\circledast$ is replaced with some other smaller 
class of test functions, for which \eqref{eq-tameness} remains valid. 
For instance, we might consider the Hermite class
\[
\S_{p,\M}^{\mathrm{Her}}=\big\{f(x)=\e^{- x^2}p(x): \,
p \text{\: a polynomial such that }\, \| f\|_{p,\M}<+\infty\big\}
\]
which was mentioned earlier, or, in the case of the unit circle, 
the class $\mathscr{P}_{\text{trig}}$ of trigonometric polynomials,
and ask whether the corresponding $L^p$-Carleman spaces undergo the same 
phase transitions.

\end{document}